\def\<{\langle}
\def\>{\rangle}
\def\ra{\rightarrow}
\def\p{\partial}
\def\a{\alpha}
\def\wh{\widehat}
\def\wt{\widetilde}
\def\D{{\cal D}}
\def\O{\Omega}
\def \sm{\setminus}
\def\-{\overline}
\def\o{\omega}
\def\ov{\overline}
\def\e{\epsilon}
\def\D{\Delta}
\def\L{\Lambda}
\def\h{\hbox}
\def\d{\delta}
\def\d{\delta}
\def\d{\delta}
\def\b{\beta}
\def\a{\alpha}
\def\RR{{\mathbb R}}
\def\CC{{\mathbb C}}
\def\NN{{\mathbb N}}
\def\BB{{\mathbb B}}
\def\-{\overline}
\def\ld{\lambda}
\def\O{\Omega}
\def\o{\omega}
\def\D{\Delta}
\def\sm{\setminus}
\def\L{\Lambda}
\def\h{\hbox}
\def\wt{\widetilde}
\def\ra{\rightarrow}
\def\p{\partial}
\def\d{\delta}
\def\a{\alpha}
\def\d{\delta}
\def\E{\mathcal E}
\def\D{\Delta}
\def\d{\delta}
\def\b{\beta}
\def\a{\alpha}
\def\a{\alpha}
\def\CC{\mathbb C}
\def\RR{{\mathbb R}}
\def\BB{{\mathbb B}}
\def\NN{{\mathbb N}}
\def\ra{\rightarrow}
\def\p{\partial}
\def\wt{\widetilde}
\def\-{\overline}
\newtheorem{theorem}{Theorem}[section]
\newtheorem{lemma}[theorem]{Lemma}
\newtheorem{proposition}[theorem]{Proposition}
\newtheorem{definition}[theorem]{Definition}
\newtheorem{example}[theorem]{Example}
\newtheorem{remark}[theorem]{Remark}
\newtheorem{conjecture}[theorem]{Conjecture}
\newcommand{\Addresses}{{
		\bigskip
		\footnotesize
  
         Hanlong Fang, \par\nopagebreak
        \textsc{School of Mathematical Sciences, Peking University, Beijing, 100871, China.}\par\nopagebreak
         \textit{E-mail address:} \href{mailto:hlfang@pku.edu.cn)}{hlfang@pku.edu.cn}

		\medskip
		
	    Xiaojun Huang, \par\nopagebreak
	   \textsc{Department of Mathematics, Rutgers University, New Brunswick, NJ 08903, USA.}\par\nopagebreak
		\textit{E-mail address}: \href{mailto:huangx$@$math.rutgers.edu}{huangx$@$math.rutgers.edu}

		\medskip
		
	    Wanke Yin, \par\nopagebreak
	   \textsc{School of Mathematics and Statistics, Wuhan University, Wuhan, Hubei, 430072, China }\par\nopagebreak
		\textit{E-mail address}: \href{mailto:wankeyin@whu.edu.cn}{wankeyin@whu.edu.cn}
		
		\medskip
		
	    Zhengyi Zhou, \par\nopagebreak
	   \textsc{Morningside Center of Mathematics, Chinese Academy of Sciences (CAS)}, \par\nopagebreak
         \textsc{Academy of Mathematics and Systems Science, CAS, Beijing, 100190, China}\par\nopagebreak
		\textit{E-mail address}: \href{mailto:zhyzhou@amss.ac.cn}{zhyzhou@amss.ac.cn}

}}
\begin{document}
\medskip
\title{\bf Bounding smooth Levi-flat hypersurfaces  in  a Stein manifold}

\author {Hanlong Fang\footnote{Supported by National Key R\&D Program of China under Grant No.2022YFA1006700 and NSFC-12201012}, Xiaojun Huang\footnote{Supported in part by  DMS-2247151}, Wanke Yin\footnote{Supported in part by  NSFC-12171372} and Zhengyi Zhou\footnote{Supported by National Key R\&D Program of China under Grant No.2023YFA1010500, NSFC-12288201 and NSFC-12231010}}
\date{}

\maketitle
\begin{abstract}
This paper is concerned with the problem of constructing a smooth Levi-flat hypersurface locally or globally attached to a real codimension two submanifold in $\CC^{n+1}$, or more generally in a Stein manifold, with elliptic CR singularities, a research direction originated from a fundamental and classical paper of E. Bishop. 
Earlier works along these lines include those by many prominent mathematicians working both on complex analysis and geometry. 
We prove that a compact smooth (or, real analytic) real codimension two submanifold $M$, that is contained in the boundary of a smoothly bounded strongly pseudoconvex domain, with a natural and necessary condition called CR non-minimal condition at CR points and with two elliptic CR singular points bounds a smooth-up-to-boundary  (real analytic-up-to-boundary, respectively) Levi-flat hypersurface $\wh{M}$. This answers a well-known question left open from the work of Dolbeault-Tomassini-Zaitsev,
or a generalized version of a problem already asked by Bishop in 1965.  Our study here reveals an intricate interaction of several complex analysis with other fields such as symplectic geometry and foliation theory. 


{\bf Key words}: Elliptic CR singularities, CR non-minimality, Singular foliations, Levi flat hypersurfaces, CR extensions, Stein retractions,  Liouville fillings, Milnor links and fibrations, Morse theory, almost holomorphic extensions, hull of holomorphy.
\end{abstract}
\section{Introduction} Let $M\subset {\mathcal M}$ be  a smooth real submanifold of a Stein manifold $\mathcal M$ of complex dimension $(n+1)$. For
any point $q\in M$, write $T^{(1,0)}_qM:=\mathbb CT_qM\cap T^{(1,0)}_q{\mathcal M}$ for the  tangent  subspace of type $(1,0)$ of $M$ at $q$. The complex dimension  of  $T^{(1,0)}_qM$, denoted by  $\h{dim}_{CR}(q)$, is the simplest holomorphic invariant  of  the germ of $M$ at
$q$. $\h{dim}_{CR}(q)$ is an upper semi-continuous function. When $\h{dim}_{CR}(q)$ is
constant for $q\in M$ near $p$, we call $p$ a CR point of $M$ with  CR dimension $\h{dim}_{CR}(q)$.
Otherwise, $p$ is called a CR singular point. The study of 
geometric, analytic, and dynamical  properties for $M$ near a CR singular point
has attracted tremendous attention in the subject of several complex variables and related subjects such as symplectic geometry \cite{Eli89, Gromov,Hofer}
since the celebrated paper of
Bishop in 1965 \cite{Bis}.

Bishop considered the case when $M$ is a real
surface  in ${\mathbb C}^{2}$ with a CR singular point at
$p$  or more generally a $(n+1)$-manifold in $\CC^{n+1}$. He discovered that under a certain
natural non-degeneracy assumption and a certain holomorphically
invariant convexity of $M$ near $p$,  now called ellipticity,  $M$ has a non-trivial local
hull of holomorphy $\widehat{M}$ and has a very rich holomorphically
invariant geometric structure. Bishop conjectured that $\widehat{M}$
is a Levi-flat submanifold which has more or less the same
regularity as $M$ does even up to $M$ near $p$. Bishop's problem was
confirmed in a sequence of papers by Kenig-Webster \cite{KW1,KW2}, Bedford-Gaveau \cite{BG},
Moser-Webster \cite{MW}, Moser \cite{Mos}, Huang-Krantz \cite{HK}, and finally in Huang \cite{Hu1}.
Bishop also asked a global bounding problem:  When does a compact real surface in $M\subset \CC^2$ with exactly two elliptic complex tangents bound a Levi-flat hypersurface in $\CC^2$? Under   a necessary  convexity assumption   that $M$ is contained in the boundary of a  smoothly bounded pseudoconvex domain,  this global problem of Bishop was  more or less  answered    in the work of Bedford-Gaveau \cite{BG} and Bedford-Klingenberg \cite{BK}. 
More  works on the geometric, analytic and dynamical properties near CR singular points  at least include the papers by Forstneri\v{c} \cite{For}, Gong \cite{Gon1,Gon2},
Gong-Lebl \cite{GL}, Gong-Stolotvich \cite{GS1,GS2}, Lebl \cite{Leb},  Burcea
\cite{Bur1,Bur2}, Baounendi-Ebenfelt-Rothschild \cite{BER}, Huang-Yin \cite{HY1,HY2,HY3,HY4}, Lebl-Noell-Ravisankar
\cite{LNR1},  Gupta-Wawrzyniak \cite{GW}, Stolovitch-Zhao \cite{SZ}, Klime\v{s}-Stolovitch \cite{KS},  and many references therein.

In an important development,  Dolbeault-Tomassini-Zaitsev in 2005 \cite{DTZ0, DTZ1} took up the generalized Bishop problem for a real codimension two
submanifold $M\subset {\mathbb C}^{n+1}$ with $n+1\ge 3$. In this
setting,  the CR singularity must have CR dimension $n$ and
CR points have CR dimension $n-1\ge 1$. For $M$ to bound a Levi-flat
submanifold $\widehat{M}$, a CR point must be CR non-minimal, namely, for each CR point $q\in M$, there is a proper CR
submanifold in $M$ passing through $q$ of  CR dimension $n-1$, which is in fact the transversal intersection of a leaf of the Levi-foliation in
$\widehat{M}$ with $M$. 
A solution for the local version of such a generalized  Bishop problem was
obtained in Fang-Huang \cite{FH} when $M$ is real analytic and has at least one elliptic direction, while the same problem remains open for $M$ being  just smooth.
While more explanation of terminologies  will be given  in \S 2, roughly speaking, a CR singular point $p$ of $M$  is called elliptic if after a certain biholomorphic change of coordinates, the projection of $M$  to the real hyperplane approximating $M$ to second order  is  a strongly convex hypersurface in that hyperplane. Equivalently, in an appropriate   holomorphic coordinate system $(z,w)\in \CC^n\times \CC$ which has $p$ corresponding to $0$ and has $M$ defined by $w=Q(z,\ov{z})+O(|z|^3)$ with $Q(z,\ov{z})$ a quadratic polynomial in $(z,\ov{z})$ we can achieve  that $Q(z,\ov z)\ge C|z|^2$ for a certain $C>0$.  A CR singular point $p\in M$ is said to have an elliptic direction if the intersection of $M$ with a certain transversal affine  complex two-space passing through $p$ has  an elliptic CR singularity at $p$. Namely, the ellipticity  is a condition on  the strong convexity in a certain subspace  (after a biholomorphic change of coordinates).


The global Bishop problem of bounding a Levi-flat hypersurface $\wh{M}$ by a compact real codimension two smooth submanifold $M$ with only non-minimal  CR points and two elliptic CR singular points was first studied in \cite{DTZ0, DTZ1} where the problem is treated as a parametrized Harvey-Lawson filling problem \cite{HL}. 
$\wh{M}$ was only proved to exist in the sense of distribution (a current). In a subsequent paper \cite{DTZ2}, Dolbeault-Tomassini-Zaitsev  showed that $\wh{M}$ is  smooth up to $M$ away from two elliptic points and is Lipschitz smooth up to CR singular points when
$M\subset\mathbb C^{n+1}=\CC^{n}\times \RR\times i\RR$ is the graph of 
a compact strongly convex hypersurface in $\CC^{n}\times \RR$.  While a certain convexity is needed even in the Harvey-Lawson filling problem to have smoothness  $\wh{M}$ near $M$, as in the two-dimensional setting \cite{BG,FM}, one expects the convexity should be weakened to be a holomorphic convexity instead of a geometric convexity and the graph position assumption for $M$ should be relaxed.  Indeed, it is this problem which motivates the current work and we give a complete solution along these lines. Our theorem below answers questions left open   from the work of \cite{DTZ0,DTZ1,DTZ2}  and can be regarded as a solution of a generalized Bishop problem for a $2n$-manifold in $\CC^{n+1}$ or more generally in a Stein manifold of complex dimension $(n+1)$: 

\begin{theorem} \label{maintheorem-global}
Let  ${\mathcal M}$ be a Stein manifold of complex dimension $n+1\ge
3$.  Assume  that 
\begin{enumerate}[label=({\bf{\Alph*}})]
\item  $M\subset {\mathcal M}$ is  a compact $C^{N_0}$-smooth submanifold of real dimension $2n$ with precisely two
elliptic CR singular points, where $N_0\ge 6$ is either a natural number or  $\infty$ or $\omega$;

\item $M$ is contained in the
boundary of a $C^2$-smoothly bounded strongly pseudoconvex domain; 

\item every CR point of $M$ is non-minimal.
\end{enumerate}
Then $M$ bounds a
unique $C^{\ell(N_0)}$-smooth Levi-flat real hypersurface with boundary  $\wh{M}\subset \mathcal M$, that has $M$ as its $C^{\ell(N_0)}$-smooth boundary, such that   the following holds:
\begin{enumerate}[label=({\bf{\Roman*}})]

\item $\wh{M}\sm M$ is foliated by a family of Stein submanifolds,   each of which is diffeomorphic to an open unit ball in $\CC^n$; 

\item the foliations in $\wh M$ shrink down to two elliptic points in the sense that  there is a $C^{\ell(N_0)}$-smooth function $t$ from $\wh{M}$ onto $[-1,1]$  that is constant on each leaf and has different value on different leaf such that $t^{-1}(\{-1, 1\})$ are precisely  the two elliptic CR singular points and thus  for any  neighborhood of $t^{-1}(-1)$ (or $t^{-1}(1)$) there is an $\e>0$ such that any leaf with its $t$-value satisfying  $|t+1|<\e$ (or $|t-1|<\e$, respectively) is contained in that neighborhood; 

\item $\widehat{M}$ is the  hull of holomorphy of $M$ when $N_0\ge 10$.
\end{enumerate}
Here for  $N_0=\infty$ or $\omega$, $M$ is  assumed to be smooth or real analytic, respectively; $\ell(N_0)=N_0$ when $N_0=\infty$ or $\omega$ and $\ell(N_0)=[\frac{N_0}{2}]-2$ when $N_0<\infty$.
\end{theorem}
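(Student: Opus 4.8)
The plan is to build $\wh M$ by a three-part strategy: a local construction near each elliptic CR singular point, a semi-global/CR-extension construction along the CR locus, and a global assembly via Morse theory on the leaf parameter, ending with a holomorphic-hull identification. First I would handle the two elliptic points. Near each such point, after a normalizing biholomorphic change of coordinates we reach a model $w=Q(z,\bar z)+O(|z|^3)$ with $Q\ge C|z|^2$; invoking the local solution of the generalized Bishop problem (the real-analytic case from Fang--Huang \cite{FH}, and its smooth counterpart, which must be established here by a Nash--Moser / hard-implicit-function scheme that accounts for the loss of derivatives reflected in $\ell(N_0)=[\frac{N_0}{2}]-2$), one obtains a local Levi-flat hypersurface-with-boundary $\wh M_{\mathrm{loc}}$ attached to $M$, foliated by small balls shrinking to the elliptic point, together with the local smooth leaf-parameter $t$. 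The regularity bookkeeping here — tracking exactly how many derivatives survive the linearization and the rapidly-convergent iteration — is where I expect the first real technical fight.

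Next I would treat the CR locus $M\sm\{p_\pm\}$. Here the non-minimality hypothesis (C) furnishes, through each CR point $q$, a codimension-one CR submanifold of CR dimension $n-1$ inside $M$; these patch to a codimension-one foliation $\mathcal F$ of $M\sm\{p_\pm\}$ whose leaves are the traces of the would-be Levi leaves. The strong pseudoconvexity hypothesis (B) — $M$ lies in the boundary of a strongly pseudoconvex domain $\Omega$ — plays the role of the geometric convexity in the Harvey--Lawson / Bedford--Gaveau picture but in its holomorphically invariant form: it guarantees that the Levi-flat filling stays in $\bar\Omega$ and lets one run a Stein-retraction / CR-extension argument leaf-by-leaf, so that each leaf of $\mathcal F$ bounds a Stein submanifold diffeomorphic to the unit ball in $\CC^n$ (the Harvey--Lawson filling in the strongly pseudoconvex ambient, via $\bar\partial$-techniques and the solution of the parametrized $\bar\partial$-Neumann/filling problem). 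Elliptic regularity up to the leaf gives each filled leaf the regularity $\ell(N_0)$, and continuity of the construction in the transverse CR parameter assembles these into a $C^{\ell(N_0)}$ family.

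The third part is the global assembly. I would glue the local fillings near $p_\pm$ to the semi-global leafwise fillings along the CR locus, using uniqueness of the local hull/Levi-flat filling to see the two constructions agree on overlaps; this produces $\wh M$ as a manifold-with-boundary $M$, foliated by Stein balls. To get the global leaf parameter $t\colon\wh M\to[-1,1]$ of item (II), I would show the leaf space is a one-manifold with two boundary points (the elliptic points), using that near each $p_\pm$ the leaves shrink down monotonically — this is essentially a Morse-theoretic statement: the function measuring "where a leaf sits" has exactly two nondegenerate critical points, namely $p_\pm$, forced by ellipticity, and no others because every interior CR point is non-minimal hence regular for the foliation. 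Orientability and the absence of holonomy come from the Stein-ball leaves (simply connected), giving a well-defined global $t$; smoothness of $t$ follows from smoothness of the family. Finally, for item (III) with $N_0\ge 10$, I would identify $\wh M$ with the hull of holomorphy $\widehat M$: one inclusion is automatic since $\wh M$ is a union of Stein submanifolds with boundary in $M$ hence contained in any hull; the reverse uses the Levi-flatness and the maximum principle along leaves together with the strong pseudoconvexity of $\Omega$ to show no analytic disc with boundary in $M$ escapes $\wh M$. The main obstacle overall is the smooth local theory near the elliptic points with sharp regularity control, together with ensuring the leafwise Harvey--Lawson fillings glue smoothly across the transition from the CR regime to the degenerating regime at $p_\pm$; the symplectic/foliation-theoretic input (Liouville fillings, Milnor fibrations) enters precisely in controlling that transition and in proving the leaves are balls.
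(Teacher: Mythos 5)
Your three-part skeleton (local theory at the two elliptic points, leafwise filling along the CR locus, global assembly plus hull identification) matches the paper's overall strategy, but there is a genuine gap at the heart of the middle step. You assert that each leaf of the CR foliation ``bounds a Stein submanifold diffeomorphic to the unit ball in $\CC^n$ (the Harvey--Lawson filling \dots via $\bar\partial$-techniques and the solution of the parametrized $\bar\partial$-Neumann/filling problem)''. Harvey--Lawson only produces a complex analytic variety with possibly isolated singularities, and no amount of $\bar\partial$-Neumann machinery or elliptic regularity rules those singularities out; this is precisely the main obstruction the paper is designed to overcome. The paper's resolution is a continuity argument in the leaf parameter: the local theorem near the elliptic point produces genuinely smooth ball-fillings for small leaves and identifies their boundaries as standard contact spheres; an openness theorem (parametrized Lewy/Hartogs extension plus a Stein retraction) propagates smoothness of the filling and of the family to nearby leaves; and closedness is supplied by the symplectic rigidity theorem of Eliashberg--Floer--McDuff (via Gray stability and Reeb--Thurston, every leaf is contactomorphic to $(S^{2n-1},\xi_{std})$, so a filling with an isolated hypersurface singularity would yield a Liouville filling with nonvanishing $H_n$, a contradiction), forcing the a priori singular Harvey--Lawson varieties to be smooth Stein balls. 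Your closing sentence gestures at ``Liouville fillings, Milnor fibrations \dots proving the leaves are balls'', but as written the body of your argument derives smoothness from analytic regularity alone, which fails.

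Two secondary gaps: first, you defer the smooth local theory at the elliptic points to a hoped-for Nash--Moser scheme; the paper instead proves it by a direct analysis of the singular foliation (rescaled covers of the punctured neighborhood, a Dolbeault--Tomassini--Zaitsev-type ray projection between the model and actual foliations, and an almost-holomorphic Bochner-type extension), and the loss $\ell(N_0)=[\frac{N_0}{2}]-2$ comes from that extension near the singularity, not from an iteration or from boundary elliptic regularity along the CR locus as you suggest. Second, for item (III) the statement ``$\wh M$ is the hull of holomorphy'' requires exhibiting a basis of Stein (pseudoconvex) neighborhoods shrinking to $\wh M$; a maximum-principle argument along leaves does not produce this. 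The paper builds a special $C^3$ defining function $\wh v$ with $|\partial\bar\partial\wh v|=o(\mathrm{dist}(\cdot,\wh M))$ (a Forstneri\v{c}--Laurent--Thi\'ebaut-type construction, glued with Whitney almost-holomorphic extensions of the leafwise parameter, and requiring $C^3$ control up to the CR singular points, whence $N_0\ge 10$), and then takes $\{\pm\wh v+\epsilon(\rho-c)<0,\ \rho<\epsilon\}$ as the shrinking Stein domains. Your easy inclusion (continuity principle) is fine.
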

\begin{remark}
The number of elliptic CR singular points cannot exceed two if all CR singular points of $M$ are elliptic.  Otherwise, as in \cite{DTZ1}, by removing small connected
saturated neighbourhoods of all  elliptic CR singular points, we obtain a compact manifold $\mathcal S$ with boundary with the given foliation of codimension one by its CR orbits. Applying the Reeb-Thurston Stability Theorem \cite{Th}, one conclude that $\mathcal S$ is diffeomorphic to $S^{2n-3}\times[0, 1]$, which is a contradiction.  
\end{remark}
The following example serves as   the   model case   in our main theorem:

\begin{example} Let $M=\{(z,w)\in \CC^{n}\times \CC:   u={\rm Re}(w),  v={\rm Im}(w), \ |z|^2+u^2=1,\ v=0\}$. Then $p=(0,-1)$ and $q=(0,1)$ are  two elliptic CR singular points of $M$ and all other points in $M$ are non-minimal CR points. $M$ is contained in
the unit ball in $\CC^{n+1}$. $M$ bounds a unique real analytic Levi-flat hypersurface $\wh{M}=\{(z,w)\in \CC^{n}\times \CC:   u={\rm Re}(w),  v={\rm Im}(w), \ |z|^2+|w|^2\le 1, v=0\}$ which has $M$ as its real analytic boundary. $\wh{M}\sm M$ is foliated by a family of complex submanifolds $\{\wh{M_t}\}_{-1<t< 1}$ where $\wh{M_t} =\{(z,t): z\in \CC^n, |z|< \sqrt{1-t^2}\}$ shrink down  to elliptic CR singular points $p$ and $q$.
\end{example}
Our main theorem can be regarded as a solution to a complex Plateau problem. Indeed,  when $\h{dim}_\RR M>1$ is odd, the Plateau problem would be asking if $M$  bounds a
complex submanifold or a complex analytic variety. A necessary condition for an affirmative solution is that $M$ has to be a CR manifold of hypersurface type as the boundary of a complex submanifold carries a naturally inherited CR structure. The solution was indeed the main content of a celebrated paper of Harvey-Lawson \cite{HL} and its later development by Dinh in \cite{Di}. 
The smoothness of $\wh{M}$ even near $M$  requires a certain global 
convexity of $M$ such as $M$ being contained in the boundary of a compact strongly pseudoconvex hypersurface.  When  $\h{dim}_\RR M$ is even,
$\wh{M}$ is of real odd dimension and what we can at most expect is for $M$ to bound a smooth submanifold foliated by maximally complex submanifolds.  Theorem \ref{maintheorem-global} serves a solution along these lines.
   
As mentioned before, the non-minimality at CR points is necessary for the existence of $\wh{M}$, just as the  CR condition is needed for an odd dimension manifold to be the regular  boundary of a complex manifold.  The existence of two elliptic points on $M$, 
whose definition will be explained later,  is also crucial for us to do the local construction of  $\wh{M}$ in the present work. Its existence on a manifold  is often derived  as a consequence of index theorem. The idea of using elliptic complex tangency to construct small leaves and then push them globally to solve the complex Plateau problem dates back to the original paper of Bishop \cite{Bis} and then was developed further in the work of Bedford-Gaveau \cite{BG}.

The problem of finding a Levi-flat hypersurface with a prescribed boundary is also formulated and studied extensively from the perspective of degenerate PDEs when $M$ is in the graph form. Here we mention 
a  paper by Slodkowski and  Tomassini \cite{ST}. (See also many references therein for  related works):

Let $M_0\subset  \CC^n\times \RR$ be a compact  real hypersurface  bounding a domain $D_0\subset \CC^n\times \RR$ such that $M_0\times i\RR$ is strongly pseudoconvex in $\CC^{n}\times\CC$. Let $v=\phi_0$ be a smooth real-valued function on $M_0$. 
Write $M$ for the graph of $\phi_0$, i.e., $M:=\{(z,z_{n+1}=u+iv):  (z,u)\in M_0,\ v=\phi_0\}$.  Let $\psi\in C^{2}(\ov{D_0})$ be a real-valued function in $(z,z_{n+1})$. 
The Levi-operator    applied  to  $\psi$ is defined as
\begin{equation*}
\mathcal L(\psi)=\h{det}\left(\begin{matrix}0&\psi_{z_j}\\
\psi_{\ov{z_k}}&\psi_{z_j\ov{z_k}}
\end{matrix}\right)_{1\le i,j\le n+1}.
\end{equation*}
There is a weak version of Levi convexity for any continuous function $\psi=-v+\phi(z,\ov{z},u)$. When  $\phi$ is  $C^2$-smooth, this weak convexity  is equivalent to the non-negativity of eigenvalues of the Levi form of the hypersurface defined by  $\psi=-v+\phi(z,\ov{z},u)=0$.  (See \cite{ST} for a detailed definition).
The  Levi Monge-Amp\'ere  equation is to solve the following boundary value problem:
\begin{equation}\label{1a}
\left\{\begin{aligned}
&\mathcal L(-v+\phi)=0\,\,\,\,\,\,\, {\rm in\,\,} D_0\\
&\ \ \ \ \ \ \ \ \ \phi=\phi_0\,\,\,\,\,\,\,{\rm on\,\,} M_0\\
&\ \ \ \ \ \ -v+\phi\,\,\,\ \ {\rm is\,\, Levi\,\, convex}.
\end{aligned}\right.  
\end{equation}
Now finding a Levi-flat hypersurface with boundary $M$ is equivalent to solving the Levi equation (\ref{1a})  in the classical sense. While the weak solution of (\ref{1a}) always exists and is unique \cite{ST}, among other things,  a least requirement for obtaining a $C^2$-smooth solution is that   CR points of $M$  are non-minimal as we mentioned before. Hence, a $C^2$-smooth solution   does not exist in general. Our theorem then provides a natural geometric setting where the unique solution $\phi$ is indeed smooth. Note that even in this setting,  our result is stronger than what is proved in \cite{DTZ2} as they need $D_0$ to be strongly convex.

Our proof is divided into proving four more or less independent results all of which may have applications in other contexts. In $\S 2$, We  first study the local problem, which is carried out through a careful analysis of singular foliations near an elliptic singularity. In this regard, we prove the following theorem:
\begin{theorem}\label{maintheorem-local}
Let $M\subset\CC^{n+1}$ be a $C^{N_0}$-smooth real codimension two submanifold with $n\geq 2$ and $p\in M$ an elliptic CR singular point. Assume that CR points in $M$ are non-minimal.  Here $N_0\ge 6$ is either a natural number or  $\infty$ or $\omega$. 
Then there is a $C^{\ell(N_0)}$-smooth Levi-flat hypersurface with boundary $\wh{M}$ that has a neighborhood of $p$ in $M$ as part of its boundary and that is foliated by compact  smooth complex hypersurfaces shrinking down to $p$. Moreover, there is a neighborhood of $p$ in $M$ such that each leaf (CR orbit) in  ${M}$ intersecting that neighborhood is CR $C^{\ell(N_0)}$-diffeomorphic to a compact strongly convex hypersurface in $\CC^n$. $\wh{M}$ is the local hull of holomorphy of $M$ near $p$.  Here $\ell(N_0)$ is  defined as in Theorem \ref{maintheorem-global}.
\end{theorem}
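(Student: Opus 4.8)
\medskip
\noindent\textbf{Sketch of the proof.} The plan is to realize $\wh M$ near $p$ as a one--parameter family $\{\wh M_t\}_{0<t<\d}$ of compact complex hypersurfaces whose boundaries lie in $M$ and are exactly the CR orbits of $M$ near $p$, and then to control this family as $t\to 0$, where it must collapse to $p$. First, fix holomorphic coordinates $(z,w)\in\CC^n\times\CC$ with $p=0$ and $M=\{w=Q(z,\ov z)+E(z,\ov z)\}$, where $Q$ is a positive definite real quadratic form (ellipticity) and $E$ is $C^{N_0}$ with $E=O(|z|^3)$. The model $M_0=\{w=Q(z,\ov z)\}$ has local Levi--flat filling $\{\mathrm{Im}\,w=0,\ \mathrm{Re}\,w\ge Q(z,\ov z)\}$, foliated by ellipsoidal discs $\{(z,t):Q(z,\ov z)\le t\}$ that shrink to $0$, each with strongly convex boundary $\{Q=t\}\subset\CC^n$. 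For $M$ itself one looks for $\wh M_t$ as a holomorphic graph $\{w=h_t(z):z\in D_t\}$ subject to the free boundary condition $h_t=Q+E$ on $\partial D_t$, where both the domain $D_t$ (a small perturbation of $\{Q<t\}$) and the holomorphic function $h_t=t+(\text{small})$ are unknown.

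Writing $\partial D_t$ as a normal graph over $\{Q=t\}$, the real part of the boundary equation merely fixes the deformation of $\partial D_t$, while the imaginary part reduces, at the linear level, to solving $\mathrm{Im}\,u=\mathrm{Im}\,E$ on $\{Q=t\}$ for a function $u$ holomorphic on $\{Q<t\}$ --- a pluriharmonic Dirichlet problem on a strongly pseudoconvex domain, which is \emph{not} freely solvable when $n\ge 2$. Its solvability is precisely the infinitesimal form of the non--minimality hypothesis on the CR points of $M$, so non--minimality enters as the exact compatibility (integrability) condition for the construction --- as it must, being already known to be necessary. Granting it, one builds an approximate Levi--flat hypersurface to high order and corrects it to an exact one by an implicit--function (Newton--iteration) argument in a suitable Hölder or Sobolev scale, producing the leaves $\wh M_t$ for all small $t>0$ together with their dependence on parameters. (Alternatively, and closer to \cite{BG,DTZ1}: slice $M$ by a transverse foliation by complex $2$--planes, on each of which $M$ induces an elliptic Bishop surface in $\CC^2$ to which the sharp smooth Bishop theory of \cite{KW1,Hu1} applies, then reassemble the resulting analytic discs across slices and levels, the holomorphicity of the assembled hypersurfaces being again guaranteed by non--minimality.)

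Set $\wh M:=\{p\}\cup\bigcup_{0<t<\d}\wh M_t$. Being a union of complex hypersurfaces, $\wh M$ is Levi--flat as soon as it is a $C^{\ell(N_0)}$ real hypersurface with boundary $\bigcup_{0<t<\d}\partial\wh M_t$, a neighbourhood of $p$ in $M$. Since the points of $\partial\wh M_t$ are non--minimal CR points of $M$ --- where $T^{(1,0)}M$ has complex dimension $n-1$, as does $T^{(1,0)}\partial\wh M_t$ --- the two subspaces coincide along $\partial\wh M_t$, so each $\partial\wh M_t$ is a CR submanifold of $M$ of the same CR dimension, hence a CR orbit, and is CR $C^{\ell(N_0)}$--diffeomorphic to the strongly convex hypersurface $\partial D_t\subset\CC^n$; conversely every CR orbit meeting a small neighbourhood of $p$ arises this way, so these orbits are compact and sphere--like, as asserted. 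The crux is the regularity of $\wh M$ \emph{up to $p$}: under the non--isotropic dilation $(z,w)\mapsto(z/\sqrt t,\,w/t)$, $M$ converges to the model $M_0$ at rate $O(\sqrt t)$ in a fixed $C^k$--topology with $k$ comparable to $N_0$, so the rescaled leaves converge to the model leaf at the same rate; running the previous estimates uniformly in $t$ in these rescaled coordinates shows that the family $\{\wh M_t\}$ extends $C^{\ell(N_0)}$--smoothly across $t=0$ after an appropriate reparametrization, which furnishes the function $t$ of the statement and the smoothness of $\wh M$ at $p$. The derivative loss $N_0\mapsto[\tfrac{N_0}{2}]-2$ combines the loss in the solution operator of the boundary value problem with the quadratic nature of the dilation, consistently with the sharp bounds of the two--dimensional theory; this uniform--in--$t$ control across the degeneration at $p$, in the finite--smoothness category, is the main obstacle, and it lies beyond the reach of the majorant/power--series methods used for the real analytic case in \cite{FH}.

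Finally, for the local hull of holomorphy: each $\wh M_t$ is a complex submanifold with $\partial\wh M_t\subset M$, so by the maximum principle every function holomorphic near $M$ extends holomorphically to $\wh M_t$, whence $\wh M$ lies in the local hull of holomorphy of $M$; indeed, $\partial\wh M_t$ being a compact strongly pseudoconvex CR manifold of hypersurface type, its polynomial hull equals $\ov{\wh M_t}$ by Harvey--Lawson--Rossi. For the reverse inclusion, the ellipticity estimate $Q\ge C|z|^2$ yields a local strictly plurisubharmonic barrier confining the hull of any sufficiently small piece of $M$ to the region swept out by the leaves, so no point of that hull near $p$ escapes $\wh M$. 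This identifies $\wh M$ with the local hull of holomorphy of $M$ near $p$ and completes the plan.
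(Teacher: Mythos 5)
Your route is genuinely different from the paper's: you propose a free-boundary scheme in which both the domain $D_t\subset\CC^n$ and the holomorphic graph $h_t$ are unknowns, corrected by a Newton iteration, whereas the paper constructs the CR orbits intrinsically as leaves of the integrable distribution that non-minimality provides on $M\setminus\{p\}$, proves after non-isotropic rescaling and a covering by balls of radius comparable to $r\delta$ that each orbit closes up into a compact strongly convex hypersurface $o(r^{N-1})$-close to the model leaf, and only then fills each orbit by an almost-holomorphic (Bochner-type) extension with weighted estimates in $u$. The pivotal assertion in your scheme --- that solvability of the linearized problem ($\mathrm{Im}\,u=\mathrm{Im}\,E$ on $\{Q=t\}$ with $u$ holomorphic in $\{Q<t\}$) ``is precisely'' the non-minimality hypothesis --- is exactly what would need proof, and even granting it for the initial data it does not suffice: for $n\ge 2$ this Dirichlet problem is overdetermined, and the compatibility condition must be shown to persist for the error terms generated at every Newton step, or the iteration cannot be run at all. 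The paper never faces this, because non-minimality is used in two concrete ways: through the Huang--Yin flattening theorem, which after a biholomorphism makes $\mathrm{Im}\,w=o(|z|^N)$ on $M$, and through integrability of the CR distribution, so the compact strongly pseudoconvex orbits exist a priori and bound complex hypersurfaces with no compatibility issue; the remaining work is uniform control as $r\to 0$. Your sketch compresses that uniform control into one sentence (``running the previous estimates uniformly in $t$''), but this is where the loss $\ell(N_0)=[\frac{N_0}{2}]-2$ actually arises (the scale-by-scale comparison with the model foliation, the convexity and closing-up of the leaves via the ray-projection argument, and the $\ov\p$-correction with estimates like $o(u^{\frac{N-1-|\a|-2|\b|}{2}})$); as stated it is a claim, not an argument, and it is the main content of the theorem in the finite-smoothness case.

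The hull statement also has a gap in the reverse inclusion. The forward inclusion by the maximum principle is fine, but ``a local strictly plurisubharmonic barrier confining the hull to the region swept out by the leaves'' cannot work as stated: that region is $\wh M$ itself, a real hypersurface with empty interior, so one must produce a basis of pseudoconvex (Stein) neighborhoods shrinking exactly to $\wh M$. This requires a defining function $\wh v$ whose complex Hessian vanishes to first order along $\wh M$ (the Hu1/Forstneri\v{c}--Laurent--Thi\'ebaut-type construction the paper carries out, which itself depends on the up-to-$p$ regularity and the almost-holomorphic extension estimates), or, in the low-regularity range $6\le N_0<10$, the Slodkowski--Tomassini result on the Levi equation that the paper invokes; ellipticity of $Q$ alone gives no such pinching of the hull onto a measure-zero set.
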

In the above, $\wh{M}$ is  the local hull of $M$ at $p$  in the sense that there is a decreasing sequence of  pseudoconvex domains $\{\O_\a\}$  containg  $p$ with $\cap_{\a} \O_\a =\{p\}$ such that the germ of the    hull of holomorphy of $\O_\a\cap M$ is the same as that  of $\wh{M}$ at $p$ for each $\a$.

\begin{example} \label{optimal-001}
When $N_0<\infty$ in Theorem \ref{maintheorem-local}, the $C^{\ell(N_0)}$-regularity  with $\ell(N_0)=[\frac{N_0}{2}]-2$ of $\wh{M}$ is  more or less optimal, as the following example  modified from the one  in \cite {Hu1} shows:

Let $l\geq2$ and $$M:=\{(z, w)\in \CC^{n}\times \CC : w = |z|^2 + |z|z_1^l\},\ z=(z_1,\cdots,z_n).$$ Then $M$ is  a real codimension two submanifold of $C^l$-smoothness near the elliptic CR singular  point $0$. The family of  closed complex balls centered at the origin with radius  $0<r\ll 1$ is attached to  $M$ near $0$  through the map defined by  $\varphi (\xi, r) = (r\xi, r^2 + r^{l+1}\xi^l_1),$ where $\xi=(\xi_1,\cdots,\xi_n)$ with $|\xi|\le 1$.
The local holomorphic hull $\wh{M}$  of M  near $0$ is the set $$\{(z = x+ iy,u+iv):z = r\xi,u = r^2 + r{\rm Re}(z_1^l),v = r{\rm Im}(z_1^l),\ \forall\,  |\xi|\le 1, 0\leq r\ll1\}.$$ 
The tangent space of $\wh{M}$ at $0$ is defined by $v=0$ and $\wh{M}$ is 
the graph of the function $v = v(x,y,u)$ over $\pi(\wh{M}) $ with $\pi$ the projection to the $(z,u)$-space. Then the regularity of $\wh{M}$ as a submanifold is the same as that for the graph function $v=v(x,y,u)$. Let $l \equiv  1$ mod $4$.  Along $x = 0$  and   $(y_2,\cdots,y_n)=0$,    $v(0, y_1,0, u) = \sqrt{u}y_1^l$ 
with $u \ge  y_1^2$. Then for $k >\frac{l+1}{2}$,
$\frac{\p^k v(0,y_1,0,u)}{\p u^k}|_{(y_1,u)=(y_1,y_1^2 )}$ is unbounded as $y_1\ra 0^+.$  Hence, we see that $\wh{M}$  is of $C^{\frac{l+1}{2}}$-smooth near $0$. 

Next we construct a small $C^l$-smooth strongly convex domain $D\subset \pi(\wh M)$ in the $(z,u)$-space with $M$ near $0$ as part of its boundary and has only one more point in $\p D$ that is tangent to a complex hyperplane defined by $u=\h{constant}, v=0$.  Then $\wh{M}\cap \pi^{-1}(\ov{D})$ is the Levi-flat hypersurface bounded by $\wh{M}\cap \pi^{-1}(\p {D})$ that is only $C^{\frac{l+1}{2}}$-smooth up to the boundary. This gives a global  example in which one can at most expect $C^{\frac{N_0+1}{2}}$-smoothness for $\wh{M}$ in Theorem \ref{maintheorem-global}.
\end{example}

In $\S 3$,  after we construct the local foliation near an elliptic  singular point, we push the foliation away from the CR singular point. The next step is then to prove an openness result. 
In this regard, we prove the following:

\begin{theorem} \label{openness} Let $\O$ be a $C^2$-smoothly bounded strongly pseudoconvex domain in a Stein manifold $\mathcal M$ of complex dimension $(n+1)$ with $n\geq 2$.  Assume that
$S\subset \p\O$  is a $C^{N_0}$-smooth  submanifold foliated by a $C^{N_0-2}$-smooth family of  mutually disjoint compact strongly pseudoconvex CR manifolds of CR dimension $(n-1)$
with a $C^{N_0-2}$-smooth diffeomorphism $\psi: (-1,1)\times S_0\ra S$  sending each  $\{t\}\times S_0$ to  a leaf $S_t\subset S$ .
Write  $\wh{S_t}$ for  the  complex analytic subvariety of codimension one   bounded by  ${S_t}$ for each $t$.  Assume that $\wh{S_0}$ is a Stein submanifold. Then each $\wh{S_t}$ is a Stein submanifold for $|t|<\e\ll1$ and  $\widehat S=\cup_{t\in (-\e,\e)} \widehat S_t$ is a $C^{\ell(N_0)}$-smooth real  hypersurface with a certain neighborhood of $S_0$ in $S$ as part of its $C^{\ell(N_0)}$-smooth boundary. Moreover the $t$-function on $\wh{S}$ which assigns value $t$ to the points in $\wh{S_t}$ is $C^{\ell(N_0)}$-smooth on $\wh{S}$ and $dt|_{\wh{S}}\not =0.$ Here $\ell(N_0)$ is defined as in Theorem \ref{maintheorem-global}.
 \end{theorem}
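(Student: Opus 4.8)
The plan is to set up a family of $\bar\partial$-type or, better, deformation-theoretic equations for the complex subvarieties $\wh{S_t}$ near $t=0$, treating $\wh{S_0}$ as a given Stein submanifold and $t$ as a parameter. First I would invoke the Harvey--Lawson machinery (and its refinement by Dinh, as cited) to know that, since each $S_t$ is a compact strongly pseudoconvex CR manifold of hypersurface type of CR dimension $(n-1)$, it bounds a unique complex subvariety $\wh{S_t}$ of codimension one inside a neighborhood of $S_t$ in $\mathcal M$, with possible isolated singularities; one needs the ambient convexity (the strongly pseudoconvex $\p\O$) to control these fillings uniformly in $t$ and to place them on the correct side. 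Because $\wh{S_0}$ is Stein, it has no compact positive-dimensional subvarieties, hence (being of dimension $n\ge 2$ and bounded by a smooth strongly pseudoconvex boundary) it is in fact smooth; the key point to establish is that smoothness and Steinness are open conditions in $t$.

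The central step is the openness/stability argument. I would fix a $C^{N_0}$-smooth defining framework: locally near $\wh{S_0}$ choose holomorphic coordinates so that $\wh{S_0}$ is a graph of a holomorphic map, and represent a nearby $\wh{S_t}$ as a graph of an almost-holomorphic map whose $\bar\partial$ is measured by how far $S_t$ moves; the boundary condition is that the graph restricts to $S_t$. This becomes an implicit-function-theorem problem in suitable Hölder or Sobolev spaces: the linearization at $t=0$ is the $\bar\partial$-operator on $\wh{S_0}$ with a Dirichlet-type boundary condition along $S_0$, which is surjective with a tame right inverse precisely because $\wh{S_0}$ is Stein and strongly pseudoconvex (Kohn's estimates / Hörmander's $L^2$-theory give the solvability, and Kohn--Nirenberg or the standard subelliptic estimates give the gain of regularity up to the boundary). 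Solving this gives, for $|t|<\e$, a complex submanifold $\wh{S_t}$ depending smoothly on $t$ and bounded by $S_t$; by uniqueness in the Harvey--Lawson theorem this must coincide with the a priori filling, and in particular each $\wh{S_t}$ is smooth, and Steinness persists because it is an open condition under $C^2$-small deformations of a strongly pseudoconvex manifold-with-boundary.

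With the family $\{\wh{S_t}\}_{|t|<\e}$ in hand, depending $C^{\ell(N_0)}$-smoothly on $t$, I would assemble $\wh S=\cup_{t}\wh{S_t}$ and verify it is a $C^{\ell(N_0)}$-smooth hypersurface with boundary. The $t$-function is by construction the parameter, and $dt|_{\wh S}\ne 0$ follows because the leaves are mutually disjoint and move transversally — concretely, one checks that the normal velocity $\partial_t$ of the family is nowhere tangent to the leaves, which in turn comes from the transversality of the $S_t$ inside $S$ (encoded in the diffeomorphism $\psi$) propagated inward by the filling. The regularity loss from $N_0$ to $\ell(N_0)=[\tfrac{N_0}{2}]-2$ is exactly the loss one sees in Example \ref{optimal-001}: half the derivatives are spent because the graph functions of the leaves acquire square-root-type behavior in the transverse direction near where leaves degenerate, so differentiating $\ell$ times in $t$ costs $2\ell$ derivatives of the original data; I would track this through the Hölder norms in the implicit-function-theorem step.

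The main obstacle I expect is the \emph{uniform} control of the fillings $\wh{S_t}$ up to and including the boundary $S_t$, with the sharp regularity count: the Harvey--Lawson solution is a priori only a subvariety with possible singularities and only limited boundary regularity, so one must combine (i) the a priori existence/uniqueness from Harvey--Lawson--Dinh, (ii) the absence of singularities for $t$ near $0$ coming from Steinness of $\wh{S_0}$ and semicontinuity, and (iii) the precise boundary-regularity and smooth-dependence-on-parameters estimates from the $\bar\partial$-Neumann or graph-transform analysis — and reconciling the naturally ``lossy'' elliptic estimates with the claimed $[\tfrac{N_0}{2}]-2$ threshold is the delicate bookkeeping at the heart of the proof.
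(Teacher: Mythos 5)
Your central step does not go through as formulated. You propose to realize $\wh{S_t}$ as a graph over $\wh{S_0}$ by an implicit function theorem whose linearization is ``the $\bar\partial$-operator on $\wh{S_0}$ with a Dirichlet-type boundary condition,'' solvable with a gain by Kohn/H\"ormander estimates. But prescribing full boundary values for a holomorphic section is an overdetermined problem: one cannot solve $\bar\partial u=0$ with arbitrary Dirichlet data, and no subelliptic estimate makes that linearization surjective. The actual linearized problem is a one-sided CR/Riemann--Hilbert extension problem: the boundary data (the normal velocity of the leaves $S_t$ along $S_0$) must satisfy the tangential CR equations, which is exactly where the hypothesis that $S$ is foliated by compact strongly pseudoconvex CR manifolds enters, and the solvability statement one needs is the holomorphic extension of CR data from the strongly pseudoconvex boundary into $\wh{S_0}$, with uniform control in $t$. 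This is precisely what the paper proves directly: a parametrized Lewy-type extension near $S_0$ (Lemma \ref{ope}, built from an almost-holomorphic flattening $\widetilde f=f-\sum h_j r^j$ plus a one-variable Cauchy transform — this bookkeeping, not any square-root degeneration of leaves, is the source of the $[\tfrac{N_0}{2}]-2$ loss here, and the paper notes it could be improved to $N_0-3$ by Baouendi--Treves), followed by a Stein retraction of a neighborhood of $\wh{S_0}\setminus S_0$ and a Hartogs extension of $(R(t,\cdot))^{-1}$ to fill the interior, with joint $(t,z)$-regularity obtained from the Bochner--Martinelli formula and Cartan's Theorem A (Lemma \ref{hsm}). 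Your proposal contains no mechanism playing the role of this interior step: holomorphy in $z$ for each $t$ plus smoothness near the boundary does not by itself give joint smoothness on all of $\wh{S_0}$, and you never address it.

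A second genuine gap is the claim that $dt|_{\wh S}\neq 0$ ``comes from the transversality of the $S_t$ inside $S$ propagated inward by the filling.'' That inward propagation is exactly the nontrivial point, not a consequence of the boundary hypothesis: a priori the map $(t,z)\mapsto H(t,z)$ could degenerate at interior points even though it is nondegenerate near $\partial\wh{S}_{0,\e_2}$. The paper rules this out by observing that $\partial\widetilde h_{n+1}/\partial t|_{t=0}$ is holomorphic on $\wh{S}_{0,\e_2}$, so the degeneracy locus is a codimension-one analytic subvariety of a Stein manifold, which would have to meet the boundary, contradicting nondegeneracy there. Some argument of this kind (maximum-principle/analytic-continuation flavor) is indispensable and is absent from your outline. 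By contrast, your appeals to Harvey--Lawson uniqueness to identify any constructed family with the given fillings, and to openness of Steinness under small deformations of a compact strongly pseudoconvex manifold-with-boundary, are fine and consistent with the paper; the missing pieces are the correct formulation and solution of the extension problem with parameters and the interior nondegeneracy.
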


The next step is to prove a closeness result. While proofs in the first two steps are based on delicate analysis related to the complex analysis of several variables,  the proof for the closeness is motivated by a deep result of Eliashberg-Floer-McDuff \cite{McDuff} in symplectic geometry. Recall that a germ of complex analytic variety $(X,p)$ with $\dim_{\mathbb C}X=n$ is said to be an isolated hypersurface singularity if $X\sm\{p\}$ is smooth and $(X,p)$ is isomorphic to a germ of complex analytic subvariety of codimension one in $\mathbb C^{n+1}$. 
We prove in \S 4 the following:

\begin{theorem}\label{nosing} Let $W$ be a Stein space of dimension $n\geq 2$ with at most  finitely many isolated hypersurface singularities. Assume further that $W$ has a compact smooth strongly pseudoconvex boundary denoted by $\partial W$. Denote by $\xi$ the contact structure on $\partial W$ induced from its CR structure. If $(\partial W,\xi)$ is contactomorphic to the unit sphere  $(S^{2n-1}, \xi_{std}) $ in $\CC^n$ equipped with the standard contact structure, then $W$ is a smooth Stein manifold and is diffeomorphic to the complex unit ball $\BB^n\subset \CC^n$.
\end{theorem}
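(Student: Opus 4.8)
The plan is to combine a topological input (the Eliashberg--Floer--McDuff theorem on symplectically aspherical fillings of the standard contact sphere) with a resolution/smoothing argument to rule out the singularities, and finally to upgrade ``topologically a ball'' to ``Stein structure on a ball''. First I would attach to $W$ the symplectic structure coming from a Stein/plurisubharmonic exhaustion: by a theorem of Grauert--Eliashberg, a Stein space with strongly pseudoconvex boundary carries an exhausting plurisubharmonic function $\rho$ whose associated K\"ahler form $\omega = dd^c\rho$ makes $(W,\omega)$ an (exact) symplectic manifold away from the singular set, with $(\partial W,\xi)$ as its contact boundary. The subtlety is that $W$ is a priori singular, so I cannot directly quote \cite{McDuff}; the first key step is therefore to \emph{smooth} $W$ near its singular points. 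Since all singularities are isolated hypersurface singularities, each admits a smoothing (the Milnor fiber) which is a compact complex manifold with boundary, and the boundary of the Milnor fiber is the link of the singularity; gluing these smoothings in place of small Stein neighborhoods of the singular points produces a compact \emph{smooth} almost-complex (indeed Stein-fillable) manifold $\wt W$ with the same contact boundary $(\partial W,\xi)\cong(S^{2n-1},\xi_{std})$. A point to check carefully here is that the smoothing can be performed so as to keep the symplectic/contact structure near the boundary unchanged and so that $\wt W$ remains symplectically aspherical (exact); this should follow because the Milnor fibers are Stein and the plurisubharmonic functions can be matched in a collar.

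Once $\wt W$ is a smooth symplectically aspherical (exact, or at least $\pi_2$-trivial on $\omega$ and $c_1$) filling of $(S^{2n-1},\xi_{std})$, I invoke the Eliashberg--Floer--McDuff theorem \cite{McDuff}: any such filling is diffeomorphic to the ball $\BB^n$. In particular $\wt W$ is simply connected and has the homology of a point. The next key step is to feed this topological conclusion back to control the singularities of $W$ itself. The Milnor fiber $F$ of an isolated hypersurface singularity of complex dimension $n\ge 2$ is $(n-1)$-connected with $H_n(F)$ free of rank equal to the Milnor number $\mu$; surgering the smoothing back to the singular model changes the topology in a way governed by $\mu$ (and by the topology of the link, which for us is the standard sphere). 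Comparing the homology of $W$ (computed from $\wt W\cong \BB^n$ by undoing the smoothings) with what a Stein \emph{smooth} ball would give, I would argue that each Milnor number must vanish, i.e. $\mu=0$ for every singular point; but $\mu = 0$ for an isolated hypersurface singularity forces the singularity to be smooth (a classical fact: $\mu\ge 1$ at a genuine singular point, indeed $\mu\ge 1$ with equality only in the smooth case). Hence $W$ has no singular points and is a smooth Stein manifold; being a smooth exact filling of $(S^{2n-1},\xi_{std})$, it is diffeomorphic to $\BB^n$ again by \cite{McDuff}.

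The step I expect to be the main obstacle is the \emph{smoothing-and-gluing} construction and the bookkeeping that it is compatible with the symplectic asphericity hypothesis needed to apply \cite{McDuff}: one must ensure that replacing the singular germs by their Milnor fibers yields a genuine exact (or at least aspherical) symplectic filling with unchanged contact boundary, and that no new $2$-spheres of positive symplectic area or nonzero $c_1$-pairing are introduced by the gluing. A secondary obstacle is the homological comparison that extracts $\mu=0$: one needs the Mayer--Vietoris / long exact sequence relating $H_*(W)$, $H_*(\wt W)$, $H_*(F)$ and $H_*(\text{link})$, together with the fact that the link here is a homology sphere (it is the standard sphere), to corner all the Milnor numbers to zero simultaneously. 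If a direct homological argument is delicate, an alternative is to use that a Stein space with smooth contact-sphere boundary which is topologically a ball must have trivial (intersection) homology, and invoke the known characterization of the smooth point among isolated hypersurface singularities via vanishing of the local cohomology/Milnor number; but either way the crux is transferring the ``ball'' conclusion through the smoothing.
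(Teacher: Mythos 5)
Your overall architecture is the same as the paper's: replace small neighborhoods of the singular points by their Milnor fibers, glue to obtain a smooth Liouville (hence exact, automatically aspherical) filling of $(S^{2n-1},\xi_{std})$, invoke Eliashberg--Floer--McDuff (Theorem \ref{efm}), and play this off against Milnor's bouquet theorem ($\mu\ge 1$, so $H_n$ of each Milnor fiber is nonzero at a genuine singular point). The gluing worries you flag are not the real issue: as in Lemma \ref{prop:glue} and Proposition \ref{prop:Milnor}, one glues Liouville cobordisms along contactomorphic ends (inserting a piece of symplectization if the contact forms only agree up to a conformal factor), and exactness makes asphericity automatic. The genuine gap is in your final ``feed the ball conclusion back'' step. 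First, the supporting fact you invoke --- ``the link here is a homology sphere (it is the standard sphere)'' --- conflates the links $L_{f_i,p_i}$ of the \emph{interior} singular points with the boundary $\partial W$. Only $\partial W$ is assumed standard; the links of the singularities are merely $(n-2)$-connected and can have nontrivial $H_{n-1}$ and $H_n$. Second, even with the correct links, the Mayer--Vietoris bookkeeping alone does not corner the Milnor numbers to zero: writing $\wt W=W_0\cup_{\sqcup L_i}(\sqcup F_i)$ and using $H_n(\wt W)=0$, one only learns that $H_n(W_0)\oplus\bigoplus_i H_n(F_i)$ is a quotient of $\bigoplus_i H_n(L_i)$, and $H_n(L_i)$ (the kernel of the intersection form on $H_n(F_i)$) can have rank as large as $\mu_i$. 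For instance, for an $A_1$ singularity with $n$ odd the Milnor fiber is a disk cotangent bundle of $S^n$ with vanishing self-intersection, so $H_n(L)\cong\ZZ$ surjects rationally onto $H_n(F)\cong\ZZ$; nothing in the exact sequence forces $\mu=0$.

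What is missing is the input the paper gets from Stein Morse theory on $W$ itself. One constructs a strongly plurisubharmonic Morse function $\rho$ on $\overline W$ whose minimum set is exactly $\{p_1,\dots,p_l\}$ and whose remaining critical points are nondegenerate of index at most $n$. Then the complement of the singular germs, $\rho^{-1}([-1+\epsilon,0])$, is a Liouville cobordism from $\sqcup_i L_{f_i,p_i}$ to $(S^{2n-1},\xi_{std})$ that deformation retracts onto the links with cells of dimension $\le n$ attached. Since attaching cells of dimension $\le n$ can never kill classes in $H_n$, the nonzero $H_n(\sqcup_i F_i)$ injects into $H_n(V)$ of the glued filling $V$, which directly contradicts $V\cong\BB^n$ from Theorem \ref{efm}; there is no need to ``undo'' the smoothing or to compare with $H_*(W)$ at all. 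So your proposal can be completed, but only by adding precisely this index-$\le n$ Morse-theoretic argument (or an equivalent replacement); as written, the homological comparison you describe would not close, and the claim about the links being standard spheres is false in general.
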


We prove Theorem \ref{nosing} and its generalization to isolated complete intersection singularities (Theorem \ref{icis}) along the line of studying the symplectic fillings of $(S^{2n-1},\xi_{std})$.  With  a complete understanding of  the symplectic fillings of $S^3$ from  the famous  works of Gromov \cite{Gromov}, Eliashberg \cite{Eli89} and McDuff \cite{McDuff90}, when $n=2$  the singularities in Theorem \ref{nosing} do not have to be of hypersurface type
and the boundary
$\partial W$ needs only to be diffeomorphic to $S^3$. More detailed discussions and connections to other related studies in the literature will be given in \S 4, where we make a conjecture generalizing \Cref{nosing}.

\medskip

The final step in the proof of Theorem \ref{maintheorem-global} is to show that the $\wh{M}$ constructed in previous Theorems is the hull of holomorphy of  $M$ in the sense that $\wh{M}$ is precisely the intersection of all pseudoconvex domains in $\mathcal M$ that contains $M$.

\begin{theorem}\label{hull} Let $M$ be as in Theorem \ref{maintheorem-global}. Assume $N_0\ge 10$. Then $\widehat M$ is the hull of holomorphy of $M$.
\end{theorem}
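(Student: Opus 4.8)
The plan is to show the two inclusions $\widehat M \subset \mathrm{hull}(M)$ and $\mathrm{hull}(M) \subset \widehat M$ separately, where $\mathrm{hull}(M)$ denotes the hull of holomorphy of $M$ (equivalently, the intersection of all pseudoconvex domains in $\mathcal M$ containing $M$, or the polynomially/holomorphically convex hull relative to $\mathcal{O}(\mathcal M)$). The first inclusion is the ``easy'' direction: by Theorem \ref{maintheorem-global}, $\widehat M \setminus M$ is foliated by Stein submanifolds $\widehat{M_t}$, each biholomorphic to a ball, with $S_t := \partial \widehat{M_t} \subset M$ being the transversal intersection of the leaf's closure with $M$ (a compact strongly pseudoconvex CR hypersurface bounding $\widehat{M_t}$). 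For each $t$, the maximum principle for plurisubharmonic functions — applied on the complex analytic variety $\overline{\widehat{M_t}}$ whose boundary lies in $M$ — forces $\widehat{M_t} \subset \mathrm{hull}(M)$. Letting $t$ range over $[-1,1]$ and including the two elliptic points (which lie in $M$ itself), we get $\widehat M = M \cup \bigcup_t \widehat{M_t} \subset \mathrm{hull}(M)$. One should be slightly careful that $\overline{\widehat{M_t}}$ is only a subvariety with boundary and possibly is not compact without boundary, but since $\partial\widehat{M_t}=S_t\subset M$, Rossi's local maximum principle (or Harvey--Lawson boundary regularity together with the fact that $S_t$ bounds $\widehat{M_t}$ uniquely) gives the statement; for the real-analytic/smooth regularity we may invoke Theorem \ref{maintheorem-local} near the elliptic points and Theorem \ref{openness} away from them.

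The substantive direction is $\mathrm{hull}(M) \subset \widehat M$, i.e. every pseudoconvex (equivalently, every $\mathcal{O}(\mathcal M)$-convex) neighborhood-type hull of $M$ is contained in the $(2n+1)$-dimensional set $\widehat M$. The key input is the global structure supplied by Theorems \ref{maintheorem-global}, \ref{openness}, and \ref{nosing}: $\widehat M$ is a compact smooth (for $N_0\ge 10$, at least $C^{\ell(N_0)}$ with $\ell(N_0)\ge 3$) Levi-flat hypersurface with boundary $M$, and $\mathcal M$ is Stein. The strategy is to produce, for any point $z_0 \in \mathcal M \setminus \widehat M$, a holomorphic function (or a plurisubharmonic exhaustion-type function) on a pseudoconvex neighborhood of $M$ that separates $z_0$ from $M$. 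Concretely: since $\mathcal M$ is Stein and $M$ lies on the boundary of a $C^2$-smoothly bounded strongly pseudoconvex domain $\Omega$ by hypothesis (B), and since $\widehat M \subset \overline\Omega$, one approach is to show $\mathcal M\setminus\widehat M$ has the expected connectivity: $\mathcal M\setminus \widehat M$ should be connected (using that $\widehat M$ is a hypersurface-with-boundary, not a closed hypersurface, so it does not locally separate near $M\setminus\{\text{elliptic points}\}$, while near the two elliptic points $\widehat M$ shrinks to a point by property (II) and hence does not separate either). Granting connectedness of $\mathcal M\setminus\widehat M$, we invoke the fact that $\mathcal M\setminus\widehat M$ is then a pseudoconvex (indeed Stein) open set — this uses Levi-flatness of $\widehat M$: locally $\widehat M=\{\mathrm{Re}\,h=0\}$ for a holomorphic $h$, so $-\log|\mathrm{dist}|$-type or $\pm\mathrm{Re}\,h$ give local plurisubharmonic barriers, and one patches these with the strong plurisubharmonicity coming from $\partial\Omega$ near $M$ to get a global plurisubharmonic exhaustion of $\mathcal M\setminus\widehat M$ that is $+\infty$ on $\widehat M$. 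Then any $z_0\notin\widehat M$ is separated from $M$, so $z_0\notin\mathrm{hull}(M)$.

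The main obstacle, and the place where I expect the real work to be, is the globalization: passing from the local Levi-flat defining functions of $\widehat M$ (which exist leaf-by-leaf, with the $t$-function of property (II) playing the role of a global ``imaginary part'' transverse coordinate) to an honest global plurisubharmonic function on $\mathcal M\setminus\widehat M$ that blows up along $\widehat M$, while controlling the behavior at the boundary $M$ and especially at the two elliptic CR singular points, where $\widehat M$ is only $C^{\ell(N_0)}$ and the leaves degenerate. Near a non-elliptic CR point of $M$ this is local CR extension across a non-minimal hypersurface and is standard; near the elliptic points we must use the explicit local model and Theorem \ref{maintheorem-local} (the statement there that $\widehat M$ is the \emph{local} hull near $p$) to see that no extra hull is created there. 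A second, more technical point is to rule out that $\mathrm{hull}(M)$ contains points ``behind'' $M$ on the side of $\partial\Omega$ opposite to $\Omega$: here strong pseudoconvexity of $\partial\Omega$ from hypothesis (B) gives a global plurisubharmonic defining function $\rho$ for $\Omega$ with $M\subset\partial\Omega=\{\rho=0\}$, so $\mathrm{hull}(M)\subset\overline\Omega$ immediately by the maximum principle for $\rho$, and one only has to analyze the hull inside $\overline\Omega$, where the leaf-by-leaf maximum principle from the first paragraph already did most of the job. Assembling these pieces — connectedness of the complement, the global psh barrier, and the elliptic-point analysis — yields $\mathrm{hull}(M)=\widehat M$ and completes the proof.
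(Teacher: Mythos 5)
Your first inclusion ($\widehat M\subset \mathrm{hull}(M)$) is fine and matches the paper, which disposes of it in one line via the continuity principle: the leaves shrink down to the elliptic points, which lie in any pseudoconvex $D\supset M$, and one then pushes along the $t$-family of analytic leaves with boundaries in $M$. (Your leaf-by-leaf maximum principle alone does not quite handle an arbitrary pseudoconvex $D\supset M$; the Kontinuit\"atssatz anchored at the elliptic points is the clean tool, but this is the easy direction either way.)

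The substantive direction is where your proposal breaks down, and the central claim it rests on is false. You propose to show that $\mathcal M\setminus\widehat M$ is pseudoconvex (indeed Stein) and carries a plurisubharmonic exhaustion blowing up along $\widehat M$. But $\widehat M$ is a \emph{compact} set (a compact hypersurface with compact boundary $M$) in a Stein manifold of complex dimension $n+1\ge 3$, and the complement of a compact set in such a manifold is never pseudoconvex (Hartogs phenomenon); equivalently, no plurisubharmonic exhaustion of $\mathcal M\setminus\widehat M$ tending to $+\infty$ on $\widehat M$ can exist. Levi-flatness only gives \emph{local} pseudoconvexity of the two sides near interior points of $\widehat M$; it cannot be globalized in the way you assert. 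Moreover, even granting such a function, it would not yield the separation you want: to show $z_0\notin\mathrm{hull}(M)$ you must exhibit a pseudoconvex domain containing $M$ and omitting $z_0$, i.e.\ you need pseudoconvex \emph{neighborhoods} of $\widehat M$ shrinking down to $\widehat M$, not pseudoconvexity of its complement — and superlevel sets of a psh function on the complement are not pseudoconvex, so your barrier produces no such domain. This is exactly the point at which the paper does its real work: Lemma \ref{regular} constructs a $C^3$ defining function $\widehat v$ of (an extension of) $\widehat M$ with the key Levi-form estimate $|\partial\bar\partial\widehat v|=o(\mathrm{dist}(\cdot,\widehat M))=o(|\widehat v|+|\rho|)$, built from Whitney almost-holomorphic extensions of the foliation charts, approximate pluriharmonic conjugates of the leaf parameter $\widehat u$, the refined regularity at the two elliptic points from Proposition \ref{mamodel}, and high-order matching on overlaps before patching with a partition of unity (following Forstneri\v{c}--Laurent--Thi\'ebaut and Huang). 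Then $\Omega_\epsilon:=\{\pm\widehat v+\epsilon(\rho-c)<0,\ \rho<\epsilon\}$ is a family of Stein domains containing $M$ and shrinking to $\widehat M$, because the strong plurisubharmonicity of $\epsilon\rho$ dominates the $o(\mathrm{dist})$ Levi form of $\pm\widehat v$. Your sketch defers precisely this construction as ``the globalization obstacle,'' and the substitute you suggest (local holomorphic defining functions $\mathrm{Re}\,h$ for $\widehat M$, $-\log\mathrm{dist}$-type barriers) is unavailable at the stated regularity: $\widehat M$ is only finitely smooth and its transverse parameter is not pluriharmonic, so the local barriers themselves must be manufactured by the almost-holomorphic extension scheme of Lemma \ref{regular}. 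As written, the proposal therefore has a genuine gap in the hard inclusion $\mathrm{hull}(M)\subset\widehat M$.
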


\section{Local theory:  Analysis of singular foliations  near an elliptic CR singularity}

\subsection{Formal flattening of an elliptic CR singularity}

Assume that $M\subset \CC^{n+1}$ is a $C^{N_0}$-smooth submanifold of real codimension two and $p\in M$ is a CR singular point. Write $(z,w)=(z_1,\cdots,z_n,w)$ for the coordinates of
${\mathbb C}^{n+1}$ with $N_0\ge 6$ being a positive integer or $N_0=\infty$ or $N_0=\o$ (namely, $M$ is real analytic). After a holomorphic change of coordinates, we
assume that $p=0,\ T^{(1,0)}_pM=\{w=0\}$. Then $M$ near $p=0$ is the
graph of a function of the form:
\begin{equation}\label{001}
 w=Q(z,\ov{z})+O(|z|^3),
 \end{equation}
where $Q(z,\ov{z})$ is a homogeneous polynomial of degree two in
$(z,\ov{z})$ and $|z|:=\sqrt{\sum_{j=1}^n|z_j|^2}$. 

After a simple holomorphic change of
coordinates, if needed, we can  write
\begin{equation} \label {0011}
Q(z,\ov{z})=2{\rm Re}{(z\cdot \mathcal A\cdot z^t)}+z\cdot \mathcal B\cdot \ov{z}^t
\end{equation}
with $\mathcal A$ and $ \mathcal B$ being two $n\times n$ matrices. Suppose that $$z=\wt{z}\cdot
P+\wt{w}\overrightarrow{a}+
O\left(|w|^2+|z|^2\right),\,\,\ w=\mu \wt{w}+ \overrightarrow b\cdot z^t+
O\left(|w|^2+|z|^2\right)$$  is  a holomorphic transformation preserving the form
as in (\ref{001}) and (\ref{0011}). Then $\overrightarrow b=0$, $\mu\not =0$, and $P$
is an $n\times n$ invertible matrix. Moreover, if $(M,0)$ is
defined in the new coordinates by $\wt{w}=\wt{Q}(\wt{z},\ov{\wt{z}})+O(|\wt{z}|^3)$ with $\wt{Q}(\wt{z},\ov{\wt{z}})=2{\rm Re}({\wt{z}\cdot \wt{\mathcal A}\cdot
\wt{z}^t})+\wt{z}\cdot \wt{\mathcal B}\cdot \ov{\wt{z}}^t,$ then
$\wt{\mathcal B}=\mu^{-1} P\cdot \mathcal B\cdot \ov{P}^t$, $\wt{\mathcal A}=\ov\mu^{-1} P\cdot\mathcal A\cdot {P}^t$. Hence the non-degeneracy of   $\mathcal B$ is an invariant notion. We say $p=0$ is a non-degenerate  CR singular point if $\mathcal B$ is invertible. For the  $M$ that interests us, any  CR point $q\in M$ is assumed to be non-minimal, namely,
{\it for each CR point $q\in M$, there is a proper CR
submanifold  in ${\mathbb C}^{n+1}$ passing through $q$, that is
contained in $M$ and has  CR dimension $n-1$}. By a result of Fang-Huang in \cite{FH},  there is a holomorphic change of coordinates near $p=0$ such that $\mathcal B$ is Hermitian.  When $\mathcal B$ is further assumed to be positive definite, then there is a holomorphic change of 
coordinates in which
\begin{equation}\label{qm}
Q(z,\-{z})=\sum\nolimits_{j=1}^{n}\left(|z_j|^2+\lambda_j(z_j^2+\-{z_j}^2)\right),\,\, 0\le \ld_1\le \ld_2\le\cdots\le \ld_n<\infty.    
\end{equation}
The set  $\{\ld_1,\cdots,\ld_n\}$ is a holomorphically invariant set and is called the set of Bishop invariants of $M$ at $p$.  

\begin{definition}
$p$ is called an elliptic CR singular point of $M$ if each $\ld_j\in [0,1/2)$. Namely,  there is a holomorphic change of coordinates in which  $p=0$ and the first non-vanishing term of a defining function 
$(M,p=0)$ is a positive definite quadratic polynomial. 
\end{definition} 
Ellipticity is equivalent to its quadratic model being strongly convex near the CR singular point in certain coordinates.

In what follows, we always assume  CR points of $M$ are non-minimal. 
Then by  a result of Huang-Yin in \cite{HY3},  when $p=0$ is an elliptic CR singular point and $M$ is smooth,  $M$ can be  flattened near $p=0$ to any order $2<N\le N_0$ with $N<\infty$
in the sense that, there is a holomorphic change of  coordinates such that in the new coordinates, $M$ is defined in a neighborhood of $p=0$ by an equation of the form
\begin{equation}\label{010-01}
w=w(z,\ov{z})=G(z,\ov{z})+iE(z,\ov{z}), \ G=\sum_{j=1}^{n}\left(|z_j|^2+\lambda_j(z_j^2+\-{z_j}^2)\right)+O(|z|^3),\ E=o(|z|^N),
\end{equation}
where  both $G(z,\ov{z})$ and $E(z,\ov{z})$ are real-valued $C^{N_0}$-smooth functions. 
Moreover, when $M$ is real analytic there is a holomorphic change of coordinates \cite{HY3} such that in the new coordinates $E\equiv0$.
Starting from this flattening result, we will prove in this section Theorem \ref{maintheorem-local}.

\subsection{Integrability condition}

In the remaining of the section, we fix an $N\in \NN$ with $6\le N\le N_0$ and  let $(M,p=0)$ be a $C^{N_0}$-smooth submanifold of real codimension two in
${\mathbb{C} }^{n+1}$ as defined in (\ref{010-01}).  Here, when $N_0<\infty$, we take $N=N_0.$
Still write 
\begin{equation} \label{7-label}
w=w(z,\ov{z})=G(z,\ov{z})+iE(z,\ov{z})=Q(z,\ov{z})+P(z,\ov{z})+iE(z,\ov{z}), 
\end{equation}
where
$Q(z,\ov{z})$ is given by (\ref{qm}) with $0\le \lambda_1,\cdots, \lambda_n<\frac{1}{2}$, and $P(z,\ov{z})$, $E(z,\ov{z})$ are real-valued $C^{N_0}$-smooth
functions such that $P=O(|z|^{3})$, $E=o(|z|^N)$.

For $1\leq j,k\leq n$, we define vector fields on $M$
\begin{equation}\begin{split}\label{325eq2}
&L_{j,k}:=A_{(k)}\frac{\p}{\p z_j}-B_{(j)}\frac{\p}{\p z_k}+C_{(j,k)}\frac{\p}{\p
w}\\
&:=\left(\frac{\p G}{\p z_k}-\sqrt{-1}\frac{\p E}{\p z_k}\right)\frac{\p}{\p z_j}-\left(\frac{\p G}{\p z_j}-\sqrt{-1}\frac{\p E}{\p z_j}\right)\frac{\p}{\p
z_k}+2\sqrt{-1}\left(\frac{\p G}{\p z_k}\frac{\p E}{\p z_j}-\frac{\p G}{\p z_j}\frac{\p E}{\p z_k}\right)\frac{\p}{\p w}.
\end{split}\end{equation}
Computation yields that 
\begin{equation}\label{325eq3}
A_{(k)}=\ov{z_k}+2\lambda_k{z_k}+O(|z|^2),\ B_{(j)}=\ov{z_j}+2\lambda_j{z_j}+O(|z|^2),\
C_{(j,k)}=o(|z|^{N}).
\end{equation}
In particular, $\{L_{j,k}\}_{j,k=1}^{n}$ generates  complex tangent vector fields of type
$(1,0)$ along $M$ near $0$ but not including $0$. 
By  computation carried out  in  \cite{HY4}, we further  have for any $1\leq j\neq k\leq n$,
\begin{equation}\label{326eq2}
[L_{j,k},\ov{L_{j,k}}]
=\lambda_{(1jk)}\frac{\p}{\p
\ov{z_j}}+\lambda_{(2jk)}\frac{\p}{\p
\ov{z_k}}+\lambda_{(3jk)}\frac{\p}{\p
\ov{w}}+\lambda_{(4jk)}\frac{\p}{\p z_j}+\lambda_{(5jk)}\frac{\p}{\p
z_k}+\lambda_{(6jk)}\frac{\p}{\p w},
\end{equation}
where
\begin{equation}\label{326eq5}
\begin{array}{rclrcl}
     \lambda_{(1jk)} & = &-\ov{z_j}-2\lambda_j{z_j}+O(|z|^2), & \lambda_{(2jk)} & = &-\ov{z_k}-2\lambda_k{z_k}+O(|z|^2),\\
     \lambda_{(4jk)} &= & {z_j}+2\lambda_j\ov{z_j}+O(|z|^2), & \lambda_{(5jk)} & = & z_k+2\lambda_k\ov{z_k}+O(|z|^2), \\
     \lambda_{(3jk)} &= & o(|z|^{N}), & \lambda_{(6jk)} & = & o(|z|^{N}).
\end{array}
\end{equation}
Since we have assumed  that $M$ is non-minimal at its
CR points, $$\big\{\h{Re}(L_{j,k}), \h{Im}(L_{j,k}), \h{Im}([L_{j,k},\ov{L_{j,k}}])\big\}_{j,k=1}^{n}$$  is an integrable distribution on $M$ away from the CR singular point, whose leaves are precisely  CR submanifolds of hypersurface type in the CR part of  $M$ of CR dimension $(n-1)$. We will call each connected leaf a CR orbit of the distribution.

\subsection{ CR orbits near an elliptic CR singular point}
We now present a proof of Theorem \ref{maintheorem-local}. When $M$ is real analytic, the proof follows readily from the result of Huang-Yin in \cite{HY3} (or \cite{HY5} whose results were later published in  \cite{HY3} and \cite{HY4}). Notice that the Lipschitz continuity up to the singular point was obtained in  \cite{DTZ2}. Based on the argument developed in the analytic category \cite{HY5}, the $C^{\infty}$-smooth case  was also considered  in \cite{Bur2}. The argument in \cite{HY5} works well in the real analytic category but is hard to be adapted to the smooth category. The proof presented here 
is based on a careful analysis of the behavior near an elliptic CR singularity for  CR foliations, which works equally well in the finite smoothness case with an effective estimate on the regularity of the hull.  More importantly  our construction and the   regularity  of $\wh{M^a_{r^*}}$ in (\ref{equ-0100}) are  heavily needed later to construct a good defining function of $\wh{M}$ near singular points that is crucial to construct a global Stein neighborhood basis for $\wh{M}$.   The analysis in this section is partially motivated by the important work of Dolbeault-Tomassini-Zaitsev in \cite{DTZ2} without appealing to the holomorphic disk method that was crucial  in the work of Kenig-Webster \cite{KW1}, Huang-Krantz \cite{HK} and Huang-Yin \cite{HY3}.  In particular,  our proof of the closeness of the orbits near an elliptic CR singular point is partially  motivated by a beautiful  idea of Dolbeault-Tomassini-Zaitsev in \cite{DTZ2}.

A basic idea in our compliacted analysis is to cover the punctured manifold $M\sm\{0\}$ with infinitely many open balls where uniform estimates can be achieved after a rescaling.  Such a cover resembles to a cover with finite geometry in other geometric contexts. 

Assume that $p=0$ and $M$ is as defined in (\ref{7-label}).   Consider the projection map $\pi: M\ra \CC^n$ which sends $(z,w)\approx 0$ to $z$.  Then $\pi$ extends to a holomorphic map. Moreover, $\pi$, when restricted to a CR leaf (also called, in what follows, a CR orbit),  is a  CR diffeomorphism,  as its inverse is precisely the graph map $\Gamma:z\ra (z,w(z,\ov{z}))$. Now 
$${\mathcal E}:=\big\{X_{2j-1,k}:=\pi_{*}(\h{Re}(L_{j,k})), X_{2j,k}:=\pi_{*}(\h{Im}(L_{j,k})),X_{2n+1,k}:=\pi_{*}(\h{Im}([L_{k-1,k},\ov{L_{k-1,k}}]))\big\}_{j,k=1}^{n}$$  
is an integrable distribution in $\CC^n\sm \{0\}$ near $0$ whose leaves are images of   $C^{N_0-2}$-smooth CR orbits in $M\sm\{0\}$ under $\pi$. (Here we make the convention that $L_{0,1}=L_{n,1}$
when $k=1$, and that $N_0-2=\infty$, $\omega$ when $N_0=\infty$, $\omega$, respectively.) Recall that when $E\equiv 0$ in (\ref{010-01}), $M$ locally bounds the Levi-flat hypersurface $\{(z,w)\in\mathbb C^{n+1}:{\rm Re}\, w\geq Q(z,\ov{z})+P(z,\ov{z}),\,\,{\rm Im}\, w=0\}$. We write the corresponding distribution and  vector fields as
$$\E^0:=\big\{ X_{2j-1,k}^0, X_{2j,k}^0, X_{2n+1,k}^0\}_{j,k=1}^{n}.$$
Note that $\E^0$
is an integrable distribution  of $C^{N_0}$-smootness.
Write  $|X(z)|:=\sum_{j=1}^{n} (|a_j(z)|+|b_j(z)|)$ for any vector field $X=\sum_{j=1}^{n}(a_j\frac{\p} {\p x_j}+b_j\frac{\p} {\p y_j})$ on $\CC^n$, where $z_j=x_j+\sqrt{-1}y_j$. 

Let $\{V_k\}_{k=1}^{n}$ be an open cover of $\CC^n\sm \{0\}$, where each $V_k$ is an open cone defined by
\begin{equation}\label{cone}
V_{k}:=\left\{z\in\CC^n: \ |z_k|^2>\frac{1}{n+1}|z|^2=\frac{1}{n+1} \sum\nolimits_{j=1}^n(x_j^2+y_j^2)\, \right\}.    
\end{equation} 
Notice that by (\ref{325eq2})-(\ref{326eq5}),  there is a ball $B(0,r^*)\subset\mathbb C^n$, which is centered at $0$ with radius $0<r^*\ll1$, and a positive constant $C_0$ such that the following holds. For each $1\leq k\leq n$, $1\leq j\leq 2n+1$ with $j\neq 2k-1,2k$, and $z\in V_{k}\cap B(0,r^{*})$,
\begin{equation}\label{smallp}
{C_0}^{-1}|z|\le |X^0_{j,k}(z)|\le C_{0}|z|,\ \  |X_{j,k}(z)-X_{j,k}^0(z)|= o(|z|^{N-1});    
\end{equation}
for each $1\leq k\leq n$, the vector fields
$\{X_{j,k}\}_{\substack{1\leq j\leq 2n+1\\{\rm with}\,\,j\neq2k-1,2k}}$, $\{X^0_{j,k}\}_{\substack{1\leq j\leq 2n+1\\{\rm with}\,\,j\neq2k-1,2k}}$ form bases of the distributions $\mathcal E$, $\mathcal E^0$, respectively, on $V_{k}\cap B(0,r^{*})$. Moreover, a derivative of order $\a$ with $|\a|\le N-2$ of a coefficient of $X_{j,k}(z)-X_{j,k}^0(z)$ is of vanishing  order $o(|z|^{N-1-|\a|})$ as $z\ra 0$.

Write $S^{00}=\{z\in\CC^n: Q(z,\ov{z})=1\}$, and $S^0_r=\{z\in\CC^n: G(z,\ov{z})=Q(z,\ov{z})+P(z,\ov{z})=r^2\}$ for $0<r\ll1$, where $Q,P$ are as in (\ref{7-label}). Note that $S^{00}$ and $S_r
^0$ with $0<r\ll1$ are boundaries of strictly convex domains containing $0$. 
In what follows, we shall fix a point
\begin{equation}\label{p0ra}
p_0=(0,\cdots,0,(1+2\ld_n)^{-\frac{1}{2}})\in S^{00}.    
\end{equation}
Since $r^2+P(rp_0,\ov{rp_0})$ is strictly increasing in $r$ for $0<r\ll1$, by the implicit function theorem, we derive a $C^{N_0}$-smooth function $r_a(r)=r+O(r^2)$  for $0< r<r^{*} \ll1$ such that 
\begin{equation}\label{p0ra2}
r_ap_0\in S^0_{r}.    
\end{equation}

We find a constant $0<\d\ll1$ and sufficiently many (more or less equally distributed)  points $\{p_l\}_{l=0}^{m-1}\subset S^{00}$ such that,  each  $B(r p_l,6r\d)$ is compactly supported in a certain  $V_{k}$, and $S^0_r\subset\cup_{l=0}^{m-1}B\left(rp_l, r\d\right)$.  Then  for $0<r^*\ll1$,\ $\{B\left(rp_l, r\d\right)\}_{0<r<r^*, 0\leq l\leq m-1}$ covers the domain $\O^0_{r^*}\sm\{0\}$, where $\O^0_{r^*}\subset\mathbb C^n$ is the domain enclosed by $S^0_{r^*}$.

Fix an $l$ with $0\leq l\leq m-1$ and parametrize the open subset ${B(p_l, 6\d)}\cap S^{00}$ of the ellipsoid $S^{00}$ by certain spherical coordinates $(\theta_1,\cdots,\theta_{2n-1})$. Denote by $\Theta_l:{B(p_l, 6\d)}\cap S^{00}\rightarrow U_l\subset\{\theta:=(\theta_1,\cdots,\theta_{2n-1})\in\mathbb R^{2n-1}\}$ the corresponding coordinates map.

Write $\d_r: z\in\CC^n\ra \CC^n$ for the dilation map defined by $\d_r(z)=rz$. We define a $C^{N_0}$-smooth coordinates map $H^{0\#}_{l,r}$ from $B(p_l, 5\d)$ into $U_l\times (-1,1)\subset\subset \RR^{2n}$
such that 
each point $q\in B(p_l, 5\d)$ is mapped to a $(\theta, x_{2n})$, where $\theta=\Theta_l(q^{00})$ with $q^{00}\in S^{00}$ and on the same radial ray of $q$, and  $x_{2n}$ is defined such that $ rq\in S^0_{(x_{2n}+1)r}$. In particular, the $x_{2n}$-component of $(H^{0\#}_{l,r}\circ\d_{r^{-1}})(S^0_r)$ is $0$. 
Then each radial ray (originated from the origin) is mapped to a vertical  line in $\RR^{2n}$ defined by $\theta=\h{constant}$ and 
each  $\d_{{r}^{-1}}(S_{(1+t){r}}^0)\cap B(p_l, 5\d)$ is mapped to a horizontal hyperplane defined by $x_{2n}=\h{constant}$. 
More precisely, the $x_{2n}$-component of the map $H^{0\#}_{l,r}(z)$ is given by $\sqrt{Q(z,\ov z)+r^{-2}P(rz,r\ov z)}-1$. Computation yields that for $|\a|\leq N$,
\begin{equation}\label{bg}
D_z^{\alpha}H^{0\#}_{l,r}=O(1),\ \ D_{(\theta,x_{2n})}^{\alpha}(H^{0\#}_{l,r})^{-1}=O(1),\,\,\h{as}\ r\rightarrow0^+.
\end{equation}
Here and in what follows we always use the notation that for any $2n$-tuple of nonnegative integers $\alpha=(\a_1,\a_2,\cdots,\a_n,\a_{\bar 1},\a_{\bar 2},\cdots,\a_{\bar n})$,
\begin{equation*}
D_z^{\alpha}:=\frac{\partial^{\a_1}}{\partial z_1^{\a_1}}\frac{\partial^{\a_2}}{\partial z_2^{\a_2}}\cdots\frac{\partial^{\a_n}}{\partial z_n^{\a_n}}\frac{\partial^{\a_1}}{\partial \ov{z_1}^{\a_{\bar 1}}}\frac{\partial^{\a_{\bar 2}}}{\partial \ov{z_2}^{\a_{\bar 2}}}\cdots\frac{\partial^{\a_{\bar n}}}{\partial \ov{z_n}^{\a_{\bar n}}}.  
\end{equation*}

Fix an integer $k$ such that $B(r p_l,6r\d)\subset\joinrel\subset V_{k}$. Now for any $j\neq2k-1, 2k$,
\begin{equation*}
(H^{0\#}_{l,r}\circ\d_{r^{-1}})_{*}(X_{j,k}^0)=\sum\nolimits_{\alpha=1}^{2n-1}b_{ljk,r}^{\alpha}(\theta,x_{2n})\frac{\p}{\p \theta_{\alpha}}\,\,\,\,\h{for}\,\, (\theta,x_{2n})\in U_l\times(-1,1).
\end{equation*} 
By considering the leading terms of $X_{j,k}^0$ given by the quadratic model $w=Q(z,\ov z)$, a direct computation yields that 
\begin{equation*}
C^{-1}\cdot I_{(2n-1)\times (2n-1)}\leq  (b_{ljk,r}^{\alpha}(\theta,x_{2n}))_{\substack{1\leq j\leq 2n+1\,{\rm with\,}j\neq 2k-1,2k\\{\rm and\,\,}1\leq\alpha\leq 2n-1}}\leq C\cdot I_{(2n-1)\times (2n-1)}
\end{equation*}
over $U_l\times(-1,1)$ for a positive constant $C$ which is independent of $r$. By (\ref{smallp}),
the distribution ${\mathcal E}^{\sharp}_{l,r}:=(H^{0\#}_{l,r}\circ\d_{r^{-1}})_{*}({\mathcal E})$ has a basis of the following form on $U_l\times(-1,1)$
\begin{equation*}
X_{l\alpha,r}^{\sharp}=\frac{\p}{\p \theta_{\alpha}}+h_{l\alpha,r}(\theta,x_{2n})\frac{\p}{\p x_{2n}}\,\,\,\,\h{for}\,\,  1\leq\alpha\leq2n-1,   
\end{equation*}
where for any $|\b|\leq N-2$,  \begin{equation}\label{hn2}
D^\b_{(\theta,x_{2n})} h_{l\alpha,r}(\theta,x_{2n})=o(r^{N-2}).
\end{equation} 

Fix $r>0$ and a point $p^*_l$ with $rp_l^*\in B(rp_l, 5\d r)\cap S^0_r$. Denote by $\theta^*$ the spherical coordinates of $p^*_l$. Choose a $C^{N-2}$-smooth function $\xi(t)$ on $(-1,1)$ such that 
$D^{\a}_t(\xi(t)-t)=o(r^{N-2})$ for $|\a|\leq N-2$. Then, to find a leave of  ${\mathcal E}^{\sharp}_{l,r}$  given by the graph of $x_{2n}=x_{2n}(\theta)$ with $x_{2n}(\theta^*)=\xi(t)$, it suffices to solve the following system of partial differential equations:
\begin{equation*}
\left\{\begin{aligned}
&\frac{\p x_{2n}(\theta)}{\p \theta_{\alpha}}=h_{l\alpha,r}(\theta, x_{2n}(\theta))\,\,\,\h{for}\,\,  1\leq\alpha\leq 2n-1\\
&\,x_{2n}(\theta^*)\,\,=\,\xi(t)
\end{aligned}\right.
\end{equation*}
under the compatbility condition $\frac{\p h_{l\alpha,r}}{\p \theta_{\beta}}=\frac{\p h_{l\beta,r}}{\p \theta_{\alpha}}$ due to the integrability of  ${\mathcal E}^{\sharp}_{l,r}$. Note that to parametrize leaves one can simply set $\xi(t)=t$, and more general $\xi(t)$ will be used to make transitions across different foliated charts. 

By  the implicit function theorem, we  have a unique solution 
\begin{equation}\label{x-2n}
x_{2n}=\Xi_{l,r}(\theta, t),\ \Xi_{l,r}(\theta^*,t)=\xi(t),
\end{equation}
where $\Xi_{l,r}$ is  $C^{N-2}$-smooth in its variables. Moreover by (\ref{hn2}) we have
\begin{equation}\label{bgg1}
D^{\a}_{(\theta,t)}\left(\Xi_{l,r}(\theta, t)-\xi(t)\right)=o(r^{N-2})\ \ \h{for}\,\,|\a|\leq N-2.    
\end{equation} 
Define a graph map 
\begin{equation}\label{ggl}
\Gamma_{l,r}(\theta,t)=\left(\theta,  \Xi_{l,r}(\theta, t)\right).    
\end{equation}
Then
\begin{equation}\label{deltar}
\d_r\circ (H^{0\#}_{l,r})^{-1}\circ \Gamma_{l,r}\circ H^{0\#}_{l,r}\circ \d_{r^{-1}}    
\end{equation} maps $S^0_{(1+t)r}$ in $B(r p_j,3r\d)$ into an open piece of a leave of $\mathcal E$ in $B(r p_j,4r\d)$, whose intersection with $\gamma^*(t)$ has the last coordinate $\xi(t)$ after applying $H^{0\#}_{l,r}\circ \d_{r^{-1}}$. Here $\gamma^*(t)$ is a parametrization of part of the ray initiated from the origin and through $rp_l^*$ such that $\gamma^*(t)\in S^0_{(1+t)r}$.  

Solving for $t$ in (\ref{x-2n}) with $\theta$ a parameter, we have $t=\Xi_{l,r}^{-1}(\theta,x_{2n})=x_{2n}+o(r^{N-2})$. (Namely, $t=\Xi_{l,r}^{-1}(\theta,\Xi_{l,r}(\theta,t))$).
Write $H_{l,r}^0:=H^{0\sharp}_{l,r}\circ \d_{r^{-1}}.$ Then when restricted to  $B(rp_l, 3r\d)$, the map 
$$H_{l,r}(z):=(\theta, \Xi_{l,r}^{-1}(\theta,x_{2n}))\circ H_{l,r}^0(z)$$ gives a trivialization for $\mathcal E$. 
Computation by (\ref{bg}), 
(\ref{bgg1}) yields that for $|\a|\le N-2$, 
\begin{equation}\label{HH}
D_z^{\a}\left(H_{l,r}-H_{l,r}^0\right)=o(r^{N-2-|\a|}),\,\,D^{\a}_{(\theta,t)}\left(H_{l,r}^{-1}-(H^0_{l,r})^{-1}\right)=o(r^{N-1}) . 
\end{equation}

Next, we have the following:
 
\begin{lemma}\label{lemma2.3}
Let $0<r_0<r^*\ll1$ and $N\ge 4$. Let $\L,\ \L^0$ be leaves of $\E,\ \E^0$, respectively, in   $B(r_0p_l, 2 r_0 \d)$ for a certain $l$, such that the distance of a point on $\L^0$ and in the ray initiated from the origin to a point in $\L$ and in the same ray is of the order $o(r_0^{N-1})$. Then  $\L$ is strongly convex. Moreover the ray distance from  $\L$ to  $\L^0$ is of order $o(r^{N-1}_0)$. 
 \end{lemma}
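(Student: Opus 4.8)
The plan is to exploit the explicit trivializations $H_{l,r}^0$ and $H_{l,r}$ constructed above, together with the quantitative closeness estimate \eqref{HH}, to transfer convexity from the model $\L^0$ to $\L$. First I would note that $\L^0$ is strongly convex: since $\E^0$ is the distribution associated to the quadratic model $w=Q(z,\ov z)+P(z,\ov z)$, its leaves inside $B(r_0p_l,2r_0\d)$ are open pieces of the level hypersurfaces $S^0_\rho=\{G(z,\ov z)=\rho^2\}$ for $\rho$ near $r_0$, which are boundaries of strictly convex domains containing $0$, hence strongly convex with second fundamental form bounded below by $c/r_0$ after rescaling by $\d_{r_0^{-1}}$ (this uniform lower bound is exactly the content of the normalization leading to the matrix bound $C^{-1}I\le (b^\alpha_{ljk,r})\le CI$).

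The second and main step is a perturbation argument. Applying the rescaling $\d_{r_0^{-1}}$ and the chart $H^{0\sharp}_{l,r_0}$, the rescaled leaf $\d_{r_0^{-1}}(\L^0)$ becomes (a piece of) the flat horizontal hyperplane $\{x_{2n}=t\}$ in $U_l\times(-1,1)$, while $\d_{r_0^{-1}}(\L)$ becomes the graph $\{x_{2n}=\Xi_{l,r_0}(\theta,t)\}$. By \eqref{bgg1}, $\Xi_{l,r_0}(\theta,t)-\xi(t)=o(r_0^{N-2})$ together with all its derivatives of order $\le N-2$; since $N\ge 4$ we control in particular the Hessian in $\theta$, so the graph of $\Xi_{l,r_0}$ is a $C^2$-small perturbation (of size $o(r_0^{N-2})$, hence $o(1)$ as $r_0\to 0$) of the flat slice. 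Pulling back by $(H^{0\sharp}_{l,r_0})^{-1}$ and $\d_{r_0}$, whose derivatives up to order $N$ are $O(1)$ by \eqref{bg}, the leaf $\L$ is a hypersurface whose second fundamental form differs from that of $\L^0$ by a term which is $o(1)\cdot r_0^{-1}$ relative to the principal curvature $\sim c/r_0$ of $\L^0$; concretely, after rescaling $\d_{r_0^{-1}}$ the curvature of $\d_{r_0^{-1}}(\L)$ is bounded below by $c/2>0$ for $r_0<r^*$ small. Therefore $\L$ is strongly convex.

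For the final assertion on the ray distance, I would observe that the hypothesis already gives that at \emph{one} radial ray (say the one through $r_0p^*_l$) the point of $\L$ and the point of $\L^0$ differ by $o(r_0^{N-1})$; this is precisely the normalization encoded by choosing $\xi(t)$ with $D^\alpha_t(\xi(t)-t)=o(r_0^{N-2})$. Combining this boundary condition with \eqref{bgg1} along the whole foliated chart, the function $x_{2n}=\Xi_{l,r_0}(\theta,t)$ describing $\L$ differs from the function $x_{2n}=t$ describing $\L^0$ by $o(r_0^{N-2})$ in the $H^{0\sharp}_{l,r_0}$-coordinates; pushing forward by $\d_{r_0}\circ(H^{0\sharp}_{l,r_0})^{-1}$ and using that the $x_{2n}$-variable corresponds (via $G(z,\ov z)=\rho^2$) to a shift of size $r_0$ in the radial parameter, the ray distance between corresponding points of $\L$ and $\L^0$ is $r_0\cdot o(r_0^{N-2})=o(r_0^{N-1})$.

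The step I expect to be the main obstacle is making the curvature-perturbation estimate genuinely \emph{scale-invariant}: one must be careful that the small quantity $o(r_0^{N-2})$ in \eqref{bgg1} is measured in the \emph{rescaled} coordinates, and that after undoing $\d_{r_0}$ the curvature of $\L$ (which naturally scales like $r_0^{-1}$) picks up only a relatively small error. This is exactly why the hypothesis requires ray distance $o(r_0^{N-1})$ rather than merely $o(r_0)$, and why the uniform two-sided bounds on $(b^\alpha_{ljk,r})$ and on $D^\alpha H^{0\sharp}_{l,r}$ (independent of $r$) are indispensable — they guarantee the model leaves $\L^0$ have curvature comparably bounded above and below across all scales, so a $C^2$-small perturbation cannot destroy strict convexity. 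Everything else is a routine application of the implicit function theorem estimates already recorded in \eqref{bg}, \eqref{hn2}, \eqref{bgg1}, \eqref{HH}.
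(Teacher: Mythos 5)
Your proposal is correct and follows essentially the same route as the paper: rescale by $\d_{r_0^{-1}}$, use the foliated charts $H^{0\#}_{l,r_0}$, $H_{l,r_0}$ together with the estimates (\ref{bg}), (\ref{bgg1}), (\ref{HH}) to view the rescaled leaf $\L$ as a $C^2$-small ($o(r_0^{N-2})$) perturbation of the uniformly strongly convex model leaf $\L^0$, and then recover the $o(r_0^{N-1})$ ray distance from the extra factor of $r_0$ when undoing the dilation. The only cosmetic difference is that the paper certifies strong convexity by exhibiting the explicit strongly convex defining function $\rho(\rho_0+2c_0)$, comparing it with $\rho_0(\rho_0+2c_0)=Q(z,\ov z)+r_0^{-2}P(r_0z,r_0\ov z)-c_0^2$, whereas you phrase the same perturbation step in terms of second fundamental forms.
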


Here 
the ray distance of $\L$ and $\L^0$ is defined as the maximum distance between two points on $\L$ and $\L^0$ in the same ray initiated from the origin, once we know both $\L$ and $\L^0$ are strongly convex with respect to the origin.
 \begin{proof}
Take a point $p^*_l$ with $r_0p_l^*\in B(r_0p_l, 3r_0\d)\cap S^0_{r_0}$. Set $\xi(t)=t$. Then, as discussed above, there are   diffeomorphisms  $H_{l,r_0}$
and $H^0_{l,r_0}$ from $B(r_0p_l, 3 r_0\d)$ into   $U_l\times (-1,1)$ with coordinates $(\theta,x_{2n})$ such that (\ref{HH}) holds and that $H_{l,r_0}$ (resp. $H^0_{l,r_0}$) maps each leaf of $\E$ (resp. $\E^0$) in $B(r_0p_l, 3 r_0\d)$ to a horizontal hyperplane 
in $U_l\times (-1,1)$. 


Suppose that $H_{l,r_0}(\L)=\{x_{2n}=c_1\}$ and $H^{0}_{l,r_0}(\L^0)=\{x_{2n}=c_0\}$. Then the given condition in the lemma yields that $c_1-c_0=o(r_0^{N-2})$.
Consider $\d_{r_0^{-1}}(\Lambda)$ and 
$\d_{r_0^{-1}}(\Lambda^0)$, 
of which the defining functions are given,  respectively, by $\rho(z)=x_{2n}\circ H_{l,r_0}\circ\d_{r_0}(z)-c_1$ and
$\rho_0(z)=x_{2n}\circ H^{0}_{l,r_0}\circ\d_{r_0} (z)-c_0$ (indeed $\rho_0(z)=r_0^{-1}\sqrt{Q(r_0z,r_0\ov z)+P(r_0z,r_0\ov z)}-c_0$). By (\ref{HH}), $D^{\a}_z(\rho(z)-\rho_0(z))\ra 0$ as $r_0\ra 0$ for $|\a|\le 2$. 
Notice that $\rho_0(\rho_0+2c_0)=Q(z,\ov z)+r_0^{-2}P(r_0z,r_0\ov z)-c_0^2$ is a strongly convex defining function of $\d_{r_0^{-1}}(\Lambda^0)$. 
Then $D^{\a}_z(\rho(\rho_0+2c_0)-\rho_0(\rho_0+2c_0))=o(1)$ as $r_0\ra 0$ for $|\a|\le 2$.
Thus  $\d_{r_0^{-1}}(\Lambda)$ has a strongly convex defining function $\rho(\rho_0+2c_0)$. Hence $\L$ is strongly convex.

Once we know $\L$ is strongly convex, the ray distance from $\L$ to $\L^0$ is well-defined and is apparently of quantity $o(r_0^{N-1})$ as $(H^0_{l,r_0})^{-1}(\theta, c_0)-H_{l,r_0}^{-1}(\theta, c_1)=o(r^{N-1}_0)$ by (\ref{HH}). \end{proof}

For any $r$ such that $0<r<r^*\ll1$, let $\L$, $\L^0$ be the leaves of $\mathcal E$, $\mathcal E^0$ passing through $r_ap_0\in S^0_{r}\cap B(rp_0, 2r\d)$, respectively, where $p_0$ and $r_a$ are given in (\ref{p0ra}) and (\ref{p0ra2}). 

We first find a constant $C$ such that any two points in $S_r^0$ can be connected by a piecewise smooth curve of length no larger than $Cr$. Then any two homotopic piecewise smooth curves relative to their endpoints can be connected by a continuous family of piecewise smooth curves of length no larger than $C^*r$ for a certain  $C^*>C$. 

Motivated by the work in \cite{DTZ2}, we now define a map $F_0$ from $\L^0$ into $\L$ as follows: For any curve $\gamma^0$ connecting $r_ap_0$ to $p\in \L ^0$ with length bounded from above by $Cr$,
we divide $\gamma^0=\gamma_1+\cdots+\gamma_\ell$ with $\ell$ being uniformly bounded such that $\gamma_j([t_j,t_{j+1}])\subset B(rp_{k(j)}, r\d)$ for a certain $k(j)$, $1\leq j\leq l$. Here $0=t_1<t_2<\cdots <t_{\ell+1}=1$. Also, the length of each $\gamma_j$ is bounded from above by $C_1r$.  Define  $F_0$ along $\gamma_1$ by sending $\gamma_1(t)$ to its ray  projection $P(\gamma_1(t))$ in $\L$. (Here all rays are initiated from the origin.)  Now there is a unique $\L_{2}$  such that $P(\gamma_1(t_2))\in \L_{2}$, where $\L_2$ is a certain leaf of $\mathcal E$ in    $B(rp_{k(2)}, 3r\d)$ for a certain $k(2)$. Also the intersection of $\L_2$ with the ray through $\gamma_1(t_2)$ has distance of order $o(r^{N-1})$ to the intersection of $\L^0$ with the ray through $\gamma_1(t_2)$  by Lemma \ref{lemma2.3}.  Next, we extend $F_0$ to  $\gamma_2$ by mapping $\gamma_2$ to its ray projection to $\L_2$.  Continuing in this manner, we define $F_0$ along $\gamma^0$.  Now a similar argument can be used to show that if $\gamma^1$ is also a piecewise smooth curve connecting $r_ap_0$ to $p$ in $S_r^0$ with length bounded  by $Cr$ and if $\{\gamma^t\}$ is a continuous family of piecewise smooth curves connecting $\gamma ^0$ to $\gamma^1$, relative to their endpoints, of lengths bounded by $C^*r$, then $F_0(\gamma^0(1))=F_0(\gamma^1(1))$. 
Since each $S_{r}^0$ is simply connected, we have a well-defined map $F_0$ from $S_{r}^0$ into a leave of $\mathcal E$  through $r_ap_0$.  Apparently $F_0$ is a local diffeomorphism with $C^{N-2}$-smoothness. Since  $S_r^0$ is compact, $F_0$ is an open and closed local diffeomorphism. 

In what follows, we always denote by $S_r$ the image of $S_r^0$ under  $F_0$. Then $S_{r}$ has to be compact and hence $F_0$ is a covering map. Moreover $S_r$ is strongly convex as it is locally. Thus $F_0:S^0_{r}\rightarrow S_{r}$ is a diffeomorphism  for $0<r\ll1$. 


Since each $S_r$ is a compact strongly convex hypersurface that is $o(r^{N-1})$-close to $S_r^0$,
we write $\Omega^0_{r}$ and $\Omega_{r}$ for  domains in $\CC^n$ enclosed by $S_{r}^0$ and $S_{r}$, respectively.   Write  
\begin{equation}\label{equ-0100}
\begin{split}
&\wh{M_{r^*}^0}:=\{ (z,u)\in \CC^n\times \RR: \ z\in \ov{\Omega_r^0},\  0< u=r^2<{r^*}^2\}\cup\{0\},\\
&\wh{M^a_{r^*}}:=\{(z,u)\in \CC^n\times \RR:  z\in \ov{\Omega_ {r}},\ 0< u=r^2<{r^*}^2\}\cup\{0\},
\end{split}    
\end{equation}
where $r^*$ is again a sufficiently small positive number.  

Let $F_0$ be constructed as above. By choosing suitable initial value functions $\xi_j(t)$ inductively along $\gamma_j$, it is easy to verify by (\ref{bgg1}) that $F_0$ is a $C^{N-2}$-smooth diffeomorphism from a neighborhood of $0\in\mathbb C^n$ but not including $0$ to another punctured neighborhood of $0\in\mathbb C^n$. Then $(F_0(z),u)$ defines a $C^{N-2}$-smooth diffeomorphism from $\p \wh{M^0_{r^*}}\sm \{0\}:=\{z\in S_r^0,\  0< u=r^2<{r^*}^2\}$ to $\p \wh{M^a_{r^*}}\sm \{0\}:=\{z\in S_ {r},\ 0< u=r^2<{r^*}^2\}$.
We next prove the following:  \begin{proposition}\label{mamodel} 
$(F_0(z),u)$ extends to a $C^{N-2}$-smooth diffeomorphism $F$ from $\wh{M_{r^*}^0}\sm\{0\}$ to $\wh{M_{r^*}^a}\sm\{0\}$ such that $D^\a_zD^\b_u(F(z,u)-{\rm Id})=o(u^{\frac{N-1-|\a|-2|\b|}{2}})$ as $(z,u)(\in \wh{M_{r^*}^0}\setminus\{0\})\ra 0$. 
In particular, 
$\wh{M_{r^*}^a}$ is $C^{[\frac{N-1}{2}]}$-smooth. 
\end{proposition}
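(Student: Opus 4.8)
The plan is to build the extension $F$ leaf-by-leaf in the $u$-direction and then control its derivatives near the origin by combining the covering-map construction of $F_0$ with the rescaling estimate \eqref{HH}, \eqref{bgg1}. Concretely, on the dyadic (or continuous) scale $u=r^2$, the map $F_0$ restricted to $S_r^0\to S_r$ is, after conjugating by $H^0_{l,r}$ in each foliated chart $B(r p_l,3r\delta)$, the composition of the trivialization maps $H_{l,r}$ and $(H^0_{l,r})^{-1}$; by \eqref{HH} these differ from the identity by $o(r^{N-2})$ in the rescaled coordinates, hence by $o(r^{N-1})$ in the unrescaled $z$-coordinates. The extension $F(z,u)$ is defined by declaring that $F$ maps the leaf $S_r^0\times\{r^2\}$ of $\widehat{M^0_{r^*}}$ to the leaf $S_r\times\{r^2\}$ of $\widehat{M^a_{r^*}}$ via $(F_0|_{S^0_r},\mathrm{id})$; because $F_0$ was already shown to be a $C^{N-2}$-smooth diffeomorphism of punctured neighborhoods of $0$ in $\mathbb C^n$ (not merely leafwise), one gets a $C^{N-2}$ diffeomorphism $F:\widehat{M^0_{r^*}}\setminus\{0\}\to\widehat{M^a_{r^*}}\setminus\{0\}$ for free, and the only real work is the decay estimate.

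For the estimate $D^\alpha_z D^\beta_u(F(z,u)-\mathrm{Id})=o(u^{(N-1-|\alpha|-2|\beta|)/2})$ I would argue by a standard scaling/parabolic-homogeneity bookkeeping. Fix $r$ with $u\approx r^2$ and work in a ball $B(rp_l,3r\delta)$; rescale by $\delta_{r^{-1}}$ so the ball has unit size. In these rescaled coordinates the map $F_0$ becomes $(H^0_{l,r})^{-1}\circ(\text{chart transition})\circ H_{l,r}$, which by \eqref{bg} has all derivatives $O(1)$ and whose distance from the identity is $o(r^{N-2})$ together with all rescaled derivatives up to order $N-2$, by \eqref{HH} and \eqref{bgg1}. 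Undoing the rescaling, a $z$-derivative of order $|\alpha|$ picks up a factor $r^{-|\alpha|}$, so $D^\alpha_z(F_0(z)-z)=o(r^{N-2-|\alpha|})=o(r^{N-1-|\alpha|})$ on that ball (absorbing the harmless difference between $r$ and $r_a$, which is $r+O(r^2)$). For the $u$-derivatives: since $u=r^2$, differentiating once in $u$ is like differentiating $\tfrac{1}{2r}\partial_r$, and the family $r\mapsto F_0|_{S^0_r}$ depends on $r$ only through the $r$-dependent vector-field coefficients $h_{l\alpha,r}$ and the initial-value functions $\xi_j(t)$, all of which (by \eqref{hn2}, \eqref{bgg1}) carry the bound $o(r^{N-2})$ stably under $r$-differentiation up to order $N-2$; each $\partial_u$ thus costs an extra $r^{-2}$, giving $D^\alpha_z D^\beta_u(F-\mathrm{Id})=o(r^{N-1-|\alpha|-2|\beta|})=o(u^{(N-1-|\alpha|-2|\beta|)/2})$. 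Independence of the foliated chart $l$ follows because the transition maps between overlapping charts are, again by the same estimates, $o(r^{N-1})$-close to the identity in $C^{N-2}$, so the globally defined $F$ (well-defined because $S^0_r$ is simply connected) inherits the bound uniformly.

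Finally, to read off the regularity of $\widehat{M^a_{r^*}}$ as a submanifold: it is the image of the $C^{N-2}$-manifold-with-corner $\widehat{M^0_{r^*}}$ (a product of closed strongly convex domains with an interval, with the tip $0$ attached) under $F$, and $F$ is a $C^{N-2}$ diffeomorphism away from $0$ which extends continuously to $0$ with $F(0)=0$. The borderline regularity at $0$ is dictated by which mixed derivatives $D^\alpha_z D^\beta_u(F-\mathrm{Id})$ remain bounded: the estimate says they do whenever $|\alpha|+2|\beta|\le N-1$. Since $z$-derivatives and $u$-derivatives of total weight (counting $\partial_u$ twice) bounded by $N-1$ stay bounded, $F$, hence $\widehat{M^a_{r^*}}$, is of class $C^{[\frac{N-1}{2}]}$ near $0$ (the worst case being all derivatives in $u$: $2|\beta|\le N-1$ forces $|\beta|\le[\frac{N-1}{2}]$, matching Example \ref{optimal-001}). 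The main obstacle I anticipate is not any single inequality but organizing the bookkeeping cleanly: making precise that the $r$-dependence of the whole tower of maps $H^{0\#}_{l,r},H_{l,r},\Xi_{l,r},\xi_j$ is stable under $\partial_r$ up to the allowed order (so that the naive "$\partial_u \leftrightarrow r^{-2}\partial_r$" substitution is rigorous), and checking that gluing across the finitely many charts $\{B(rp_l,3r\delta)\}_l$ and across neighboring scales $r$ does not degrade the exponent — i.e. that the construction genuinely has the "finite geometry after rescaling" advertised earlier in the section.
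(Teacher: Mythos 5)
Your proposal has a genuine gap at the very first step: you never actually define the extension $F$ on the solid set $\wh{M_{r^*}^0}\sm\{0\}$. By (\ref{equ-0100}) the fibre of $\wh{M_{r^*}^0}$ over $u=r^2$ is the closed domain $\ov{\Omega^0_r}$, not its boundary sphere $S^0_r$; prescribing that $S^0_r\times\{r^2\}$ goes to $S_r\times\{r^2\}$ via $F_0$ only determines $F$ on the lateral boundary $\p\wh{M^0_{r^*}}\sm\{0\}$, which is exactly the datum the proposition asks to extend inward. Your fallback --- take $F(z,u)=(F_0(z),u)$ on the whole solid, ``for free'' --- is a different map and needs verifications you do not supply: (i) that it maps fibres onto fibres, i.e. $F_0(\ov{\Omega^0_r})=\ov{\Omega_r}$, which requires an argument that the compact leaves $S_{r'}$, $r'<r$, are nested inside $\Omega_r$ and fill $\Omega_r\sm\{0\}$ without gaps; and (ii) a treatment of the central axis $\{z=0,\ 0<u<r^{*2}\}\subset\wh{M^0_{r^*}}\sm\{0\}$, where $F_0$ is a priori undefined (it is only a diffeomorphism of punctured neighborhoods) and where interior points with $|z|\ll\sqrt{u}$ must still carry the stated $u$-weighted estimates. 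Moreover, your derivative bookkeeping does not match the map you wrote down: for the product map the error $F-{\rm Id}$ is independent of $u$, so the whole discussion of ``$\partial_u\leftrightarrow\frac{1}{2r}\partial_r$'', of the $r$-dependence of $h_{l\alpha,r}$ and of the $\xi_j$, concerns differentiation of the leafwise family in the leaf parameter, which is relevant only if the value $F(z,u)$ is built from the boundary map of the leaf $S^0_{\sqrt{u}}$ --- a map you never construct. (There is also a scaling slip: from the rescaled estimate $o(r^{N-2})$, undoing the inner dilation gives $o(r^{N-2-|\a|})$, and it is the outer dilation $\d_r$ that restores the extra factor $r$; the chain ``$o(r^{N-2-|\a|})=o(r^{N-1-|\a|})$'' asserts an inclusion in the wrong direction.)

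The paper's proof consists precisely of the construction you skipped. It extends the boundary map into each fibre by radial projection to the boundary leaf: the factor $\eta(Z,r)\ge 1$ is defined implicitly by $\eta^2\bigl(Q(Z,\ov Z)+(\eta r)^{-2}P(r\eta Z,r\eta\ov Z)\bigr)=1$ with $D^\a_ZD^\b_r\eta=O(r^{-|\b|})$, one sets $F^{0\sharp}(z,u)=\bigl(\eta^{-1}F_0(\eta z),u\bigr)$ with $\eta$ evaluated at $(z/\sqrt{u},\sqrt{u})$, and then one glues with the identity near the centre of each fibre via the cutoff $\chi(z/\sqrt{u})$, because the radial projection degenerates there and $F_0$ is unavailable at the axis. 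It is this $u$-dependence through $\eta(\cdot/\sqrt{u},\sqrt{u})$ and $\chi(\cdot/\sqrt{u})$ that produces the weight $2|\b|$ and hence the exponent $\frac{N-1-|\a|-2|\b|}{2}$, and it is this explicit form (identity near the axis, boundary data used only in an annulus $c_0\sqrt{u}\lesssim|z|\lesssim\sqrt{u}$) that is exploited later to build $\wt{\rho}$ and the almost-holomorphic corrections $h_j$. Without either this construction or a completed version of your product-map alternative, the assertions of the proposition --- that $F$ is a diffeomorphism onto $\wh{M^a_{r^*}}\sm\{0\}$ satisfying the weighted decay, and the resulting $C^{[\frac{N-1}{2}]}$-smoothness at the tip --- are not established.
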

\begin{proof}
As in the construction of $F_0$, 
restricted to each $B(rp_l, 3r\d)$, $F_0$  is given by  (\ref{deltar}) for a certain initial value function $\xi(t)$ with $D^{\a}_t(\xi(t)-t)=o(r^{N-2})$.
Since $D^{\a}(\Gamma_l-\h{Id})=o(r^{N-2})$,
when restricted to $B(rp_l, 3r\d)$,  we conclude by (\ref{bg}) that $F_0(z)=z+G_l(z)$ where 
\begin{equation}\label{f0gl}
D_z^\a G_l=o(r^{N-1-|\a|})\ \  \h{for}\,\, |\a|\le N-2.
\end{equation}

Next for $Z\in \d_{r^{-1}}(\ov {\O^0_r})$ with $|Z|>c_0$, where $c_0$ is a certain constant such that $B(0,4c_0)\subset\joinrel\subset \d_{r^{-1}}(\O_r^0)$ for  $0<r<r^*$, we define $\eta(Z,r)\ge 1$ such that
$\eta(Z,r)\tilde z\in \d_{r^{-1}}(\p\O_r^0).$ Then $\eta(Z,r)$ is uniquely solved by the following equation:
$$\eta^2(Q(Z,\ov{Z})+(\eta r )^{-2}P(r\eta Z, r\eta\ov{Z}))=1.$$
Hence $\eta(Z,r)$ is
$C^{N_0}$-smooth in its variables, and 
\begin{equation}\label{deta}
D^\a_{Z}D^\b_r\eta(Z,r)=O(r^{-|\b|})\ \  \h{for}\,\, |\a|+|\b|\le N-2. 
\end{equation} 
Let $\chi(Z)\in C_0^{\infty}(\CC^n)$ be such that $0\le \chi(Z)\le 1$, $\chi(Z)=1$ for $3c_0<|Z|<2$ and $\chi(Z)=0$ for $|Z|<2c_0$. 
Now for $z\in \O^0_{\sqrt{u}}$ where $|z|>2c_0\sqrt{u}$,  define
$$F^{0\sharp}(z,u)=\left( (\eta(\frac{z}{\sqrt{u}},\sqrt{u}))^{-1}\cdot F_0(\eta(\frac{z}{\sqrt{u}},\sqrt{u})z),\,\,u\right).$$
Finally, we define
\begin{equation}\label{FF}
 F(z,u)=F^{0\sharp}(z,u) \chi(\frac{z}{\sqrt{u}})+(1-\chi(\frac{z}{\sqrt{u}}))\h{Id}.
\end{equation}
By the construction, it is clear  that $F(z,u)$ is a $C^{N-2}$-smooth map from  $\wh{M^0_{r^*}}\sm\{0\}$ into  $\wh{M^a_{r^*}}\sm \{0\}$ and
$F|_{\p \wh{M^0_{r^*}}\sm\{0\}}=(F_0(z),u)$ is a $C^{N-2}$-diffeomorphism from 
$\p \wh{M^0_{r^*}}\sm \{0\}$ to $\p \wh{M^a_{r^*}}\sm \{0\}$. 

We next prove other statements in the proposition. Let $(z,u)$ be such that $\eta(z,u)z\in B(rp_l,3r\d)$. Then $c^{-1}r\leq\sqrt{u}\leq cr$ for a certain constant $c>1$. Since $F_0(z)=z+G_l(z)$ and (\ref{f0gl}) holds in $B(rp_l,3r\d)$, we have by  (\ref{deta}) that 
$$D^\a_{z}D^{\b}_u \left(F_0(\eta(\frac{z}{\sqrt{u}},\sqrt{u})z)-\eta(\frac{z}{\sqrt{u}},\sqrt{u})z\right)=o(u^{\frac{N-1-|\a|-2|\b|}{2}}).$$
By (\ref{FF}), we conclude that
$D^\a_{z}D^{\b}_u(F(z,u)-\h{Id})=o(u^{\frac{N-1-|a|-2|\b|}{2}}).$
This completes the proof. \end{proof}

Note that the graph function $\Gamma(z)=\left(z,w(z,\ov{z})\right)$ restricted to $S_r\subset\mathbb C^n$ extends to a biholomorphic map from   $\O_r$ to its image, a complex submanifold enclosed by the CR orbit $M_r:=\Gamma(S_r)$ in $M$ which is denoted by  $\wh{M_r}$.   Write $\wh{M^b_{r^*}}:=\cup _{0<r<r^*}\wh{M_r}\cup\{0\}$.  We then  prove  that $\wh{M^b_{r^*}}$ is a $C^{[\frac{N}{2}]-2}$-smooth Levi-flat hypersurface with $M$ near $0$ as part of its smooth boundary. 

\begin{proof}[Proof of Theorem \ref{maintheorem-local}]
Write $(\wt{z},{u})=F(z,u)$ 
and its inverse $(z,u):=(\wt{z},u)+\wt{G}(\wt{z},u)$. 
Then
$D^\a_{\wt{z}}D^{\b}_u(\wt{G}(\wt{z},u))=o(u^{\frac{N-1-|\a|-2|\b|}{2}}).$ 
Define $\wh{w}(\wt{z},\ov{\wt{z}},{u}):=-u+w(\wt{z},\ov{\wt{z}})$. Then $S_{\sqrt{u}}$ is defined by
$$\wt{\rho}(\wt{z},{u}):=-u+(Q+P)(z(\wt{z},{u}),\ov{z(\wt{z},{u})})=0,$$
and for each fixed $u$, $\wh{w}$ is a CR function when restricted to $S_{\sqrt{u}}$. 

Set $0<r^*\ll1$. Similar to the construction in the proof of the classical  Bochner  extension theorem (see Theorem 2.3.2$\empty^{\prime}$ of \cite{Ho}),  we will construct, for   $1\leq j\leq[\frac{N}{2}]-1$,  $C^{N-2-j}$-smooth functions $h_j(\wt{z},\ov{\wt{z}},u)$  on $\{(\wt{z},u):  \wt{z}\in \ov{\Omega_ {\sqrt{u}}},\ 0< u<r^{*2}\}$ 
such that 
$\ov{\p}_{\wt z}w^\sharp=O(\wt{\rho}^{[\frac{N}{2}]-1})$ and for $|\a|+|\b|\le [\frac{N-1}{2}]$, \begin{equation}\label{wsha}
D_{\wt{z}}^\a D^{\b}_{u} w^\sharp=o(u^{\frac{N-|\a|-2|\b|}{2}})\ \ \h{as} \,\,(\wt z,u)(\in \wh{M}^a_{r^*}\setminus\{0\})\ra 0,  
\end{equation}  
where $w^{\sharp}$ is a $C^{[\frac{N-1}{2}]}$-smooth function defined by
\begin{equation}\label{dbar-01}
w^{\sharp}:=\wh{w}(\wt{z},\ov{\wt{z}}, u)
-\wt{\rho}-\sum\nolimits_{j=1}^{[\frac{N}{2}]-1}j^{-1} h_j \wt{\rho}^j.
\end{equation}



For $1\leq k\leq n$, let $\chi_k(Z)\in C_0^{\infty}(V_k)$ be such that $0\le \chi_k(Z)\le 1$ and  $\sum_{k=1}^n\chi_k=\chi$, where $V_k$ are the open cones defined by (\ref{cone}) and $\chi$ is the cutoff function used in (\ref{FF}). Define 
\begin{equation}\label{h1}
h_1:=\sum_{k=1}^n \chi_k(\frac{\wt z}{\sqrt{u}})\cdot\frac{\partial(\wh w-\wt\rho)}{\partial\ov {\wt {z_k}}}(\frac{\partial\wt\rho}{\partial\ov {\wt {z_k}}})^{-1},\,\,g_1=\frac{\ov\partial(\wh w-\wt\rho)-h_1\ov\partial\wt\rho}{\wt\rho}. 
\end{equation}
Here $h_1$ is well-defined for $\frac{\partial\wt\rho}{\partial\ov {\wt {z_k}}}\neq0$ on the support of $\chi_{k}$, and $g_1$ is well-defined for $\frac{\partial\wt\rho}{\partial\ov {\wt {z_j}}}\frac{\partial(\wh w-\wt\rho)}{\partial\ov {\wt {z_k}}}=\frac{\partial\wt\rho}{\partial\ov {\wt {z_k}}}\frac{\partial(\wh w-\wt\rho)}{\partial\ov {\wt {z_j}}}$ on $\{\widetilde\rho=0\}$. Since 
\begin{equation*}
(-\ov\partial h_1+g_1)\wedge\ov\partial\wt\rho=\ov\partial g_1\cdot\wt\rho,    
\end{equation*} 
we have $\frac{\partial\wt\rho}{\partial\ov{\wt  {z_j}}}(-\ov\partial h_1+g_1)_{\ov k}=\frac{\partial\wt\rho}{\partial\ov{\wt  {z_k}}}(-\ov\partial h_1+g_1)_{\ov j}$ on $\{\widetilde\rho=0\}$, where we write $(\mathcal W)_{\ov k}=\lambda_{\ov k}$ for a one form $\mathcal W=\sum{\lambda_{\ov k}}\cdot d\ov {z_k}$. We then define
\begin{equation*}
h_2:=\sum_{k=1}^n\chi_k(\frac{\wt z}{\sqrt{u}})\cdot(-\ov\partial h_1+g_1)_{\ov k}(\frac{\partial\wt\rho}{\partial\ov{\wt  {z_k}}})^{-1},\,\,g_2=\frac{-\ov\partial h_1+g_1-h_2\ov\partial\wt\rho}{\wt\rho}.
\end{equation*}
Notice that
\begin{equation*}
\ov\partial(\wh w-\wt\rho-h_1\wt\rho-\frac{1}{2}h_2\wt\rho^2)=-\wt\rho\ov\partial h_1+\wt\rho g_1-\wt\rho(h_2\ov\partial\rho)-\frac{1}{2}\ov\partial h_2\wt\rho^2=(g_2-\frac{1}{2}\ov\partial h_2)\wt\rho^2=O(\wt\rho^2).
\end{equation*}
Inductively, we define for $2\leq j\leq[\frac{N}{2}]-1$, 
\begin{equation}\label{hgj}
h_j:=\sum_{k=1}^n\chi_k(\frac{\wt z}{\sqrt{u}})\cdot(-\frac{1}{j-1}\ov\partial h_{j-1}+g_{j-1})_{\ov k}(\frac{\partial\wt\rho}{\partial\ov{\wt {z_k}}})^{-1},\,\,g_j=\frac{-\frac{1}{j-1}\ov\partial h_{j-1}+g_{j-1}-h_j\ov\partial\wt\rho}{\wt\rho}. 
\end{equation}
By construction, it is clear that $\ov{\p}_{\wt z}w^\sharp=O(\wt{\rho}^{[\frac{N}{2}]-1})$. On the other hand, computation yields that
\begin{equation*}
D^{\a}_{\wt z}D^{\b}_{u}(\wh w(\wt{z},u)-\wt\rho(\wt{z},u))=o(u^{\frac{N-|\a|-2|\b|}{2}}), D^{\a}_{\wt z}D^{\b}_{u}(\frac{\partial\wt\rho}{\partial\ov{\wt {z_k}}})^{-1}=O(u^{\frac{-1-|\a|-2|\b|}{2}}),  D^{\a}_{\wt z}D^{\b}_{u}(\wt{\rho})=O(u^{\frac{2-|\a|-2|\b|}{2}}). 
\end{equation*}
Then $D^{\a}_{\wt z}D^{\b}_{u}(h_1\wt{\rho})=o({u}^{\frac{N-|\a|-2|\b|}{2}})$. We claim that for $2\leq j\leq[\frac{N}{2}]-1$,
\begin{equation}\label{hj}
D^{\a}_{\wt z}D^{\b}_{u}(h_j\wt{\rho}^j)=o({u}^{\frac{N-|\a|-2|\b|}{2}}),\ \ D^{\a}_{\wt z}D^{\b}_{u}\left((-\frac{1}{j-1}\ov\partial h_{j-1}+g_{j-1})\wt{\rho}^j\right)=o(u^{\frac{N+1-|\a|-2|\b|}{2}}).
\end{equation}
We shall prove by induction. By (\ref{h1}), we have 
\begin{equation*}
\begin{split}
&D^{\a}_{\wt z}D^{\b}_{u}\left((-\ov\partial h_{1}+g_{1})\wt{\rho}^2\right)=D^{\a}_{\wt z}D^{\b}_{u}\left(\wt{\rho}\cdot\ov\partial(\wh w-\wt\rho)-\wt{\rho}\cdot\ov\partial(h_1\wt\rho)\right)=o(u^{\frac{N+1-|\a|-2|\b|}{2}}). 
\end{split}
\end{equation*}
Then (\ref{hj}) holds for $j=2$. Assume that (\ref{hj}) holds for all $j$ such that $2\leq j\leq j^*$ with $2\leq j^*<[\frac{N}{2}]-1$. We proceed to show (\ref{hj}) for $j=j^*+1$. Exploiting (\ref{hgj}), we derive that
\begin{equation*}
\begin{split}
&D^{\a}_{\wt z}D^{\b}_{u}\left((-\frac{1}{j^*}\ov\partial h_{j^*}+g_{j^*})\wt{\rho}^{j^*+1}\right)=D^{\a}_{\wt z}D^{\b}_{u}\left((-\frac{1}{j^*-1}\ov\partial h_{j^*-1}+g_{j^*-1})\wt{\rho}^{j^*}-\wt{\rho}\cdot\ov\partial(\frac{1}{j^*}h_{j^*}\wt{\rho}^{j^*})\right). 
\end{split}
\end{equation*}
Then by induction hypothesis, we conclude the claim. 
We thus have (\ref{wsha}).


Define $C^{[\frac{N}{2}]-2}$-smooth functions $\varphi_j$ by $\varphi_j :=\frac{\partial w^\sharp}{ \partial \ov { \wt{z_j}}}$
on $\{(\wt{z},u):  \wt{z}\in \ov{\Omega_ {\sqrt{u}}},\ 0< u\ll1\}$ and $0$ otherwise. As in the case of solving $\ov\partial$-equations with compact support, letting
$$\Phi(\wt{z},u)=\frac{1}{2\pi i}\int_{\CC}\frac{\varphi_1(\xi,\wt{z_2},\cdots,\wt{z_n})}{\xi-\wt{z_1}}d\xi\wedge \ov{d\xi}=\frac{-1}{\pi}\int_{0}^{2\pi}\int_{0}^{\infty}\varphi_1(\wt{z_1}+\rho e^{i\theta},\wt{z_2},\cdots,\wt{z_n})e^{-i\theta} d\rho d\theta,$$
then $\ov\partial _{\wt z}\Phi =\sum_{j=1}^{n} \varphi_j d\ov{\wt{z_j}}.$
Moreover, we see that $\wt{w^{\sharp}}(\wt{z},u):=w^\sharp(\wt{z},u)-\Phi(\wt{z},u)$
is the holomorphic extension of $\wh{w}|_{S_{\sqrt{u}}}$ to $\O_{\sqrt{u}}$ that is  $C^{[\frac{N}{2}]-2}$-smooth over  $\wh{M_{r^*}}\sm\{0\}$ with 
\begin{equation}\label{aesti}
D_{\wt z}^{\a}D_{{u}}^\b{\wt{w^\sharp}}=o({u}^{\frac{N-1-|\a|-2|\b|}{2}})\ \ \h{for}\,\,  |\a|+|\b|\le[\frac{N}{2}]-2. 
\end{equation}
Now $\mathcal {F}(\widetilde z,u):=\left(\widetilde z,u+\wt{w^\sharp}(\wt z,u)\right)$ 
is a $C^{[\frac{N}{2}]-2}$-smooth diffeomorphism from $\wh{M^a_{r^*}}\sm\{0\}$  to  $\wh{M_{r^*}}\sm\{0\}.$  
Apparently by (\ref{aesti}), we have
$$D_{\wt z}^\a D^\b_u{\mathcal F}\ra 0\ \ \h{as} \,\,(\wt z,u)(\in \wh{M}^a_{r^*}\setminus\{0\})\ra 0\ \ \h{for}\,\,  |\a|+|\b|\le[\frac{N}{2}]-2.$$ Hence $\wh{M^b_{r^*}}$  is of  $C^{[\frac{N}{2}]-2}$-smoothness class by Proposition \ref{mamodel}.  When $N\ge 10$, $\wh{M^b_{r^*}}$  is $C^3$-smooth up to the singular point $0$.  In this case,  we see that   $\wh{M^b_{r^*}}$ is a local holomorphic hull of $M$ near $0$  by applying 
an argument similar to that in  \cite{Hu1}. (See also the later proof of Theorem \ref{hull}.)

When $N\ge 6$, we find a strongly convex domain with $M$ near $0$  as part of its boundary and then regard $\wh{M^b_{r^*}}$ as the graph function over this strongly convex domain. We then have  at least a $C^1$-smooth solution (which is $C^3$-smooth away from the CR  singularity) of the Levi equation. Therefore it is holomorphically convex by a result in \cite{ST}. 

This completes the proof of Theorem \ref{maintheorem-local}. 
\end{proof}
\section{Openness: Local deformation of a smooth Stein hypersurface}

In this section, we provide a proof of Theorem \ref{openness}. 

\begin{proof}[Proof of Theorem \ref{openness}]
Let $p_0\in S_0$. In a local holomorphic chart, we assume $\mathcal M$  near $p_0$ is an open subset of $\CC^{n+1}$ with coordinates $(z,w)=(z_1,\cdots,z_n,w)\in \CC^n\times\CC$. After a holomorphic change of coordinates,  assume $p_0=0$ and ${\rm Span}(T_{0}{S_0}\cup J(T_{0} {S_0}))$ is defined by $\{w=0\}$. 
Define $\pi$ the projection map  $\CC^{n+1}\ra \CC^n$ by sending
$(z,w)$ to $z$. Then, when restricted to each $S_t$  near $p_0$, $\pi$ is a CR diffeomorphism.  Define $\Phi$ from $S$ near $p_0=0$ to the space $(-1,1)\times\CC^{n}$ by sending each $p\in S_t$ to $(t, \pi(p))$. Letting $\psi$ be as in the theorem, notice that $\psi_{*}(\frac{\p}{\p t})$ is transversal to $S_0$ at $p$ by the  hypothesis. On the other hand, let $\rho$ be a strongly plurisubharmonic defining function of $\O$ near $p_0$ and let $\xi(\tau)$ be a   holomoprhic embedding from the unit disk  $\D\subset \CC$ into $\wh{S_0}$ with its image near $p_0$,  which is $C^1$-smooth up to $\ov{\D}$  and is attached to $S_0$ in the sense that  $\xi(\p \D)\subset \p \O$. Assume that $\xi(1)=p_0$.  
Applying the Hopf lemma to $\rho\circ \xi$ at $\tau=1$, $\xi_*$ maps the normal direction of $\D$ at $1$ to a normal direction of $\O$ at $p_0$.   
We  conclude that $\wh{S_0}$ is transversal to $\p \O$.
Then $\Phi$ is a  $C^{N_0-2}$-diffeomorphism from $U_0$, a small neighborhood  of $0$ in $S$, to a certain real hypersurface  of $(-1,1)\times\CC^{n}$  near $(0,0)$. Moreover, $\Phi(t,\cdot)$ is a local CR diffeomorphism. For each fixed $t$, denote by $\Psi(t,\cdot)$ the inverse CR diffeomorphism of $\Phi(t,\cdot)$.  

We write $S^*_t=\Phi(S_t\cap U_0)$, which is a strongly pseudoconvex hypersurface near $\pi(p(t))$. Here $p(t):=\psi(t, p_0)\in S_t$  is a $C^{N_0-2}$-smooth curve transversal to $S_0$. Let $D^*_{t}$  be a small domain near $\pi(p(t))$ in the strongly pseudoconvex side of $S^*_t$ such that any point in  $D^*_{t}$  can be connected by a continuous family of embedded holomorphic disks attached to $S^*_t$ shrinking down to $\pi(p(t))$ and $D^*_{t}$ contains every point in the pseudoconvex side of $S^*_t$ that has Euclidean distance to $\pi(p(t))$ less than a certain positive constant $\e_0\ll1$.  
Since each $S^*_t$ is a $C^{N_0-2}$-smooth family of strongly pseudoconvex manifolds, we  see the $C^{N_0-2}$-smooth dependence of $\Psi(t,\cdot)$  in $t$.

\begin{lemma}\label{ope} 
$\Psi$ extends to a $C^{[N_0/2]-2}$-smooth function over a certain small domain $\ov{\O^*}$ in $(-1,1)\times \CC^n$ such that $\Psi(t,\cdot)$ is holomorphic over  $\O^*\cap D_t^*$, and that $\ov{\O^*\cap D_t^*}$ contains every point  in  the pseudoconvex side of ${S^*_t}$ with a Euclidean distance to $\pi(p(t))$ bounded by a certain $\e_0^{\prime}$ where  $0<\e_0^{\prime}<\e_0\ll1$. 
\end{lemma}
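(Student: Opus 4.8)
The plan is to read Lemma \ref{ope} as a parametrized, quantitative local extension theorem: the CR map $\Psi(t,\cdot)=(\pi|_{S_t})^{-1}$ is to be extended holomorphically in $z$ across the strongly pseudoconvex hypersurface $S^*_t$ to its pseudoconvex side, with joint $C^{[N_0/2]-2}$-regularity in $(t,z)$ and over a one-sided neighborhood of size uniform in $t$. For a single $t$ this is Lewy's extension theorem, equivalently the local graph description $\wh{S_t}=\{w=g_t(z)\}$ of the Harvey--Lawson hypersurface bounded by the compact strongly pseudoconvex $S_t\ss\p\Omega$, smooth up to $S_t$; the content of the lemma is the uniformity and joint smoothness in $t$. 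I would obtain these by running, with $t$ carried along, the Bochner-type construction from the proof of Theorem \ref{maintheorem-local}, which here is simpler because nothing degenerates: the $S^*_t$ stay uniformly strongly pseudoconvex, so no blow-up is needed.

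First I would normalize: fix balls $U'\dsubset U\dsubset\CC^n$ about $0$ and a defining function $\rho^*(t,z,\ov z)$ of $S^*_t$ on $U$, jointly $C^{N_0-2}$ in $(t,z)$ and uniformly strongly plurisubharmonic in $z$ for $|t|<\e$ (available since $\{S^*_t\}$ is a $C^{N_0-2}$ family through the points $\pi(p(t))$, which tend to $0$). Writing $\Psi(t,z)=(z,g_t(z))$ on $S^*_t$, it suffices to extend the scalar CR function $g_t$; I extend it off $S^*_t$ to a jointly $C^{N_0-2}$ function $G(t,z,\ov z)$ on $U$, so that the CR condition reads $\ov\p_z G\wedge\ov\p_z\rho^*=O(\rho^*)$.

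Next I would run the Bochner correction exactly as in \eqref{h1}--\eqref{hgj}, with $\wt\rho$ replaced by $\rho^*$ and $u$ by $t$: using $\p\rho^*/\p\ov{z_k}\neq 0$ (strong pseudoconvexity, uniformly in $t$), I inductively build $h_j(t,z,\ov z)$ of class $C^{N_0-2-j}$, $1\le j\le m:=[N_0/2]-1$, so that
\[
w^\sharp:=G-\sum\nolimits_{j=1}^{m}\tfrac{1}{j}\,h_j\,(\rho^*)^{j}
\]
has $\ov\p_z w^\sharp=O((\rho^*)^{m})$ uniformly in $|t|<\e$, with $w^\sharp$ jointly of class $C^{[N_0/2]-2}$ (the regularity bookkeeping being the same as in the proof of Theorem \ref{maintheorem-local}). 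To pass to a genuine holomorphic extension I would invoke the soft existence of $F_t$ (the local graph function $g_t$ of $\wh{S_t}$), note that by a divisibility argument $F_t$ agrees with $w^\sharp$ to order $m$ along $S^*_t$, so that $\Phi:=w^\sharp-F_t$ is the unique $\ov\p$-solution of $\ov\p_z\Phi=\ov\p_z w^\sharp$ vanishing on $S^*_t$, and then realize $\Phi$ by an explicit $\ov\p$-solution operator — a one-variable Cauchy transform of $\ov\p_z w^\sharp$ together with the Hartogs--Liouville argument in $\CC^n$, $n\ge 2$, that pins the boundary values to zero, exactly as in the proof of Theorem \ref{maintheorem-local}. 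This exhibits $\Phi$, hence $\wt{w^\sharp}:=w^\sharp-\Phi=F_t$, as jointly $C^{[N_0/2]-2}$ in $(t,z)$, holomorphic in $z$ on $\{\rho^*(t,\cdot)<0\}\cap U'$, with boundary value $g_t$ on $S^*_t$. Taking $\e$ and $0<\e_0'<\e_0$ small enough that the $\e_0'$-neighborhoods of the $\pi(p(t))$ all lie in $U'$, and setting $\Omega^*$ to be their union over $|t|<\e$, the map $(z,\wt{w^\sharp}(t,z))$ is the required $C^{[N_0/2]-2}$ extension of $\Psi$ over $\ov{\Omega^*}$, holomorphic in $z$ on $\Omega^*\cap D^*_t$, with $dt\neq 0$ and with $\ov{\Omega^*\cap D^*_t}$ containing every point of the pseudoconvex side of $S^*_t$ within distance $\e_0'$ of $\pi(p(t))$.

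I expect the main obstacle to be the combination of (i) making the $\ov\p$-step produce a holomorphic function with exactly the prescribed CR boundary values, rather than those values modified by an uncontrolled holomorphic error — the classical delicate point in Bochner-type extension — and (ii) keeping every estimate uniform over the parameter $t$; both are handled by working on the one fixed neighborhood $U$ on which $\rho^*(t,\cdot)$ is uniformly strongly plurisubharmonic, together with the divisibility/Hartogs--Liouville matching above. A fully alternative, perhaps more direct route is a parametrized Lewy extension by Bishop discs attached to $S^*_t$: a Bishop-type functional equation with $C^{N_0-2}$ data, solved by the implicit function theorem in H\"older spaces, yields disc families depending $C^{N_0-2}$-smoothly on $t$ and on the disc parameters; $\Psi$ extends along each disc by the one-variable Cauchy integral, and the continuity principle along the disc families that define $D^*_t$ then even gives the extension over all of $D^*_t$.
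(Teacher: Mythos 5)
Your proposal is correct and takes essentially the same route as the paper's proof: normalize $S^*_t$ by a $t$-dependent strongly pseudoconvex defining function on a fixed neighborhood, run the parametrized Bochner-type correction $\sum_j h_j\rho^j$ (as in \eqref{dbar-01} and \eqref{h1}--\eqref{hgj}) to get an almost-holomorphic extension whose $\ov\p_z$-data vanishes to order $[N_0/2]-1$ on the hypersurface, and remove the error by a one-variable Cauchy transform, with all estimates uniform in $t$. The only difference is cosmetic: the paper pins the boundary values directly by the classical Lewy/H\"ormander support-and-connectivity argument (the correction is holomorphic and vanishes identically on the side $\{r>0\}$, whose $z_1$-slices are connected and reach $|z_1|=\delta$), so your additional appeal to the a priori existence of $F_t$ and the order-$m$ divisibility matching is harmless but unnecessary scaffolding, since that same vanishing argument is what you ultimately need anyway.
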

The case for $N_0=\omega$ is a consequence of the classical Cartan theorem (see, for instance, \cite{HY3}). Hence we assume that $N_0\leq\infty$.

\begin{proof}[Proof of Lemma \ref{ope}] We will follow the proof of the classical Lewy extension theorem (see, for instance, \cite{CS}) and study   the dependence on $t$. 

By shrinking $U_0$, the small neighborhood  of $0$ in $S$ as above, and making a  holomorphic  change in $z\in \CC^n$ that depends $C^{N_0-2}$-smoothly on $t$, 
we may assume that the image $\Phi(U_0)$ is a hypersurface defined by a  $C^{N_0-2}$-smooth function 
\begin{equation}\label{boundary}
r(t,z):=-\h{Re}\,z_n+\sum\nolimits_{j=1}^n |z_j|^2+r^*(z,t)=0 
\end{equation}
in a certain domain $\widehat U$ in  $(-1,1)\times\CC^{n}$  containing $(0,\pi(p_0))=(0,0)$. Here $r^*(z,t)=O(|z|^3)$, and $(t, p(t)$) is mapped to $(t,0)$ in the new coordinates.  
Thus, we can  choose $\delta>0$, and then $\epsilon>0$ such that $\frac{\partial^2r}{\partial z_1\overline\partial z_1}>0$ and $\frac{\partial r}{\partial z_n}\neq 0$ on 
\begin{equation}\label{unbound}
U:=\{(t,z)\in(-\e,\e)\times\mathbb C^n: |z_1|<\delta\,\,{\rm and}\,\, |z_2|+|z_3|+\cdots+ |z_n|< \epsilon\}\subset \widehat U    
\end{equation}
and that $r(t,z)>0$ on the part of the boundary of $U$ where $|z_1|=\delta$. Write $U_{-}$ for the subset of $U$ where $r(t,z)<0$.

Now let $f(t,z)$ be a $C^{N_0-2}$-smooth function defined on $U$ such that for each fixed $t$, $f(t,\cdot)$ is a CR function in $z\in S^*_t$. Fix an $N\in \NN$ such that $t\le N\le N_0$, and when $N_0<\infty$,  we set $N=N_0$. As we did in \eqref{dbar-01}, we can find $C^{[N/2]-1}$-smooth functions $h_1,\cdots, h_{[N/2]-1}$ in $(t,z)$ near $(0,0)$ such that $\ov\partial_z\widetilde f(t,z)=O(r^{[N/2]-1})$, where
$$\widetilde f(t,z):=f(t,z)-\sum\nolimits_{j=1}^{[N/2]-1} h_j(t,z)\cdot (r(t,z))^j.$$

Note that for any fixed $(t,z_2,\cdots,z_n)$ in
$$V:=\{(t,z^{\prime}):=(t,z_2,\cdots,z_n)\in(-\e,\e)\times\mathbb C^{n-1}: |z_2|+|z_3|+\cdots+ |z_n|< \epsilon\},$$ 
the set of all $z_1$ with $|z_1|<\delta$ such that $r(t,z)>0$ is connected. We define a $C^{[N/2]-2}$-smooth one form $g:=\sum_{j=1}^n g_j(t,z)d\overline z_j$ on $\CC \times V$ by
$g_j(t,z):=\frac{\partial \widetilde f(t,z)}{\partial\overline z_j}$ 
on $\overline{U_-}$ and $0$ otherwise, for $1\leq j\leq n$.
For any $(t,z_2,z_3\cdots,z_n)\in V$, we define
\begin{equation*}
 G(t,z_1,z_2,\cdots,z_n):=\frac{1}{2\pi i}\int_{\mathbb C}\frac{g_1(t,\xi,z_2,\cdots,z_n)}{\xi-z_1}\,d\xi\wedge d\overline \xi.  
\end{equation*}
Then $\ov\partial G=g$ and $G(t,z)\in C^{[N/2]-2}(\mathbb C\times V)$. 
Now the function $F=\widetilde f-G$ 
is in $C^{[N/2]-2}(U)$ with $F=\widetilde f = f$ on $U\cap \Phi(U_0)$, and $F(t,\cdot)$ is holomorphic in $z$ on $U_-$. 
By choosing $f$ to be the components of $\Psi$, the proof of Lemma \ref{ope} is complete. \end{proof}

We still write $\Psi$ for its holomorphic extension.  Repeat the above procedure at sufficiently many  points  $\{p_j\}_{j=0}^m\subset  S_0$ such that the constructed $\{\ov{\O_j}:=\Psi_j(\ov{\O^*_j})\}_{j=0}^m$ covers a neighborhood of  $S_0$ in $S$.  Then there are  a certain constant $0<\e\ll1$ and a $C^{[N_0/2]-2}$-smooth  submanifold of real  codimension one $\O_\e\subset \mathcal M $ such that the following holds.
\begin{enumerate}[label=(\roman*)]
\item $\O_\e$ is a union of a family of mutually disjointed  complex submanifolds of codimension one; 

\item $\O_\e$ has  
$S$ near $S_0$ as  part of the $C^{[N_0/2]-2}$-smooth boundary;  

\item $\O_{\e}$ stays in the pseudoconvex side of $S$;  

\item 
if $x\in\cup_{t\in (-\e,\e)} \widehat S_t$ is in the pseudoconvex side of $S$ with  the distance of $x$ to $S_0$ is less than $\e$ for a certain fixed metric, then $x\in \O_{\e}$.
\end{enumerate}

Notice that $\widehat S_0$  has no singularity by the assumption. Let $\rho$ be a $C^{[N_0/2]-2}$-smooth defining function of $S$ near $S_0$ in $\ov\O_{\e}$, that is  strongly plurisubharmonic when restricted to $\wh{S_t}$  for $|t|\ll1$. 
For any $0<\e_1\ll\e$, we define Stein spaces
\begin{equation}\label{noboundary}
\wh{S}_{t,\e_1}:=\{z\in \wh{S}_t:\rho(z)<-\e_1\,\ \h{or} \ z\not \in \O_\e\}.
\end{equation}
By a result of Stein (see \cite[Theorem 3.3.3]{Fo}), there is a holomorphic retraction $R$ from a neighborhood $V$ of $\wh{S}_{0}\sm S_0$ in $\mathcal M$
to  $\wh{S}_{0}\sm S_0$ with the retraction along the complex normal direction of  $\wh{S}_{0}\sm S_0$. Also at any point $z_0\in \wh{S_0}\sm S_0$, $R$ can be written in a certain local chart as the projection to its first $n$-components. It is clear that for $0<|t|\ll\epsilon_1\ll\e$, the restriction $R(t,\cdot)$ of $R$ to $\wh{S}_{t, \e_1}$ maps $V\cap \wh{S}_{t, \e_1}\cap \O_{\e}$ biholomorphically to a complex open submanifold $E_{t,\e_1}$ of $\wh{S}_{0}$. 
Note that we can choose $0< \e_1 < \e_2\ll \e$, such that $\p \wh{S}_{0, \e_2}\subset E_{t,\e_1}$ for any $|t|<\e'$ with $0<\epsilon'\ll\e_1$. These regions are depicted in \Cref{fig:egg}.

\begin{figure}[htb] {\small
\begin{center}
\begin{overpic}[ scale=0.4]
{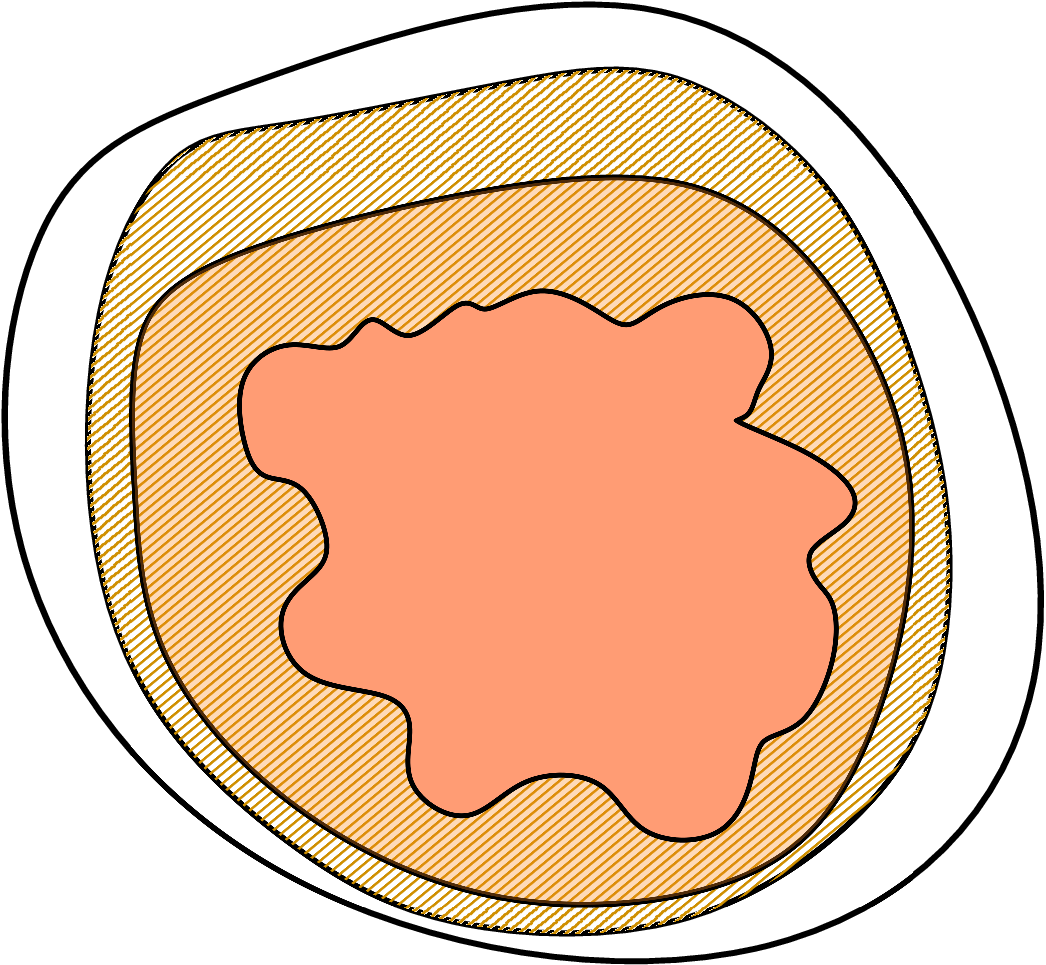}
\put (35,173) {$\wh{S}_0$}
\put (62,141) {$E_{t,\e_1}$}
\put (136,114) {$\wh{S}_{0,\e_2}$}
\put (100,80) {$\wh{S}_0\backslash \O_\e$}
\end{overpic}
\end{center}}
\caption{Relations between different regions.}
\label{fig:egg}
\end{figure}

Embed $\mathcal M$ as a closed complex submanifold in $\CC^m$. We  apply the Hartogs extension theorem to the inversion $(R(t,\cdot))^{-1}$ to get a map $H(t,z):=(h_1(t,z),\cdots, h_{m}(t,z))$ on $(-\e',\e')\times \wh{S}_{0,\epsilon_2}$ such that the following holds:
\begin{enumerate}[label=(\roman*)]
\item $h_i(t,z)$, $1\leq i\leq m$, is holomorphic in $z$ for $(t,z)\in(-\e',\e')\times\wh{S}_{0,\epsilon_2}$, and $H(t,z)$ is of full rank near  $\partial\wh{S}_{0,\epsilon_2}$;
\item $h_i(t,z)$, $1\leq i\leq m$, is $C^{[N_0/2]-2}$-smooth in $(t,z)$ for $(t,z)$ in 
\begin{equation}\label{bd}
\{(t,z):-\e'<t<\e',\,\,z\in (\wh{S}_{0,\e_2}\cap\Omega_{\e})\cup E_{t,\epsilon_1}\}.
\end{equation}
\end{enumerate}


\begin{lemma}\label{hsm}
$H(t,z)$ is a $C^{[N_0/2]-2}$-smooth map on $(-\e',\e')\times \wh{S}_{0,\epsilon_2}$. 
\end{lemma}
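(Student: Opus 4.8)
The plan is to push the joint $(t,z)$-regularity that the Hartogs extension already provides near $\p\wh{S}_{0,\e_2}$ into the interior of $\wh{S}_{0,\e_2}$, using only that $z\mapsto H(t,z)$ is holomorphic on $\wh{S}_{0,\e_2}$ for every fixed $t$ (property (i) above). The first step is to extract a collar of $\p\wh{S}_{0,\e_2}$ that is uniform in $t$: since the region $(\ref{bd})$ is open and, by the $C^{[N_0/2]-2}$-smoothness of $H$ on it together with $\p\wh{S}_{0,\e_2}\subset E_{t,\e_1}$ for all $|t|<\e'$, it contains $\{t\}\times\p\wh{S}_{0,\e_2}$ for each such $t$, and since $\p\wh{S}_{0,\e_2}$ is compact, $(\ref{bd})$ contains a box $(t-\d_t,t+\d_t)\times W_t$ about each such slice; passing to a finite subcover of a fixed compact interval $[-\e'',\e'']$ (with $\e''\in(0,\e')$ arbitrary) and intersecting the corresponding $W_t$'s yields a single open neighborhood $W$ of $\p\wh{S}_{0,\e_2}$ in $\wh{S}_0$ with $H\in C^{[N_0/2]-2}\big((-\e'',\e'')\times W\big)$. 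Because $\overline{\wh{S}_{0,\e_2}}$ is a compact subset of $\wh{S}_0\sm S_0$ whose topological boundary $\p\wh{S}_{0,\e_2}$ lies in $W$, each component $h_i(t,\cdot)$ ($|t|<\e''$, $1\le i\le m$) is holomorphic on $\wh{S}_{0,\e_2}$ and continuous up to the boundary, so the maximum modulus principle is available.

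The second step is an induction on $k$ with $0\le k\le[N_0/2]-2$: $\p_t^kH$ exists on $(-\e'',\e'')\times\wh{S}_{0,\e_2}$, is holomorphic in $z$ for each $t$, agrees with the classical $t$-derivative on $(-\e'',\e'')\times W$, and $\p_t^kH(t,\cdot)\to\p_t^kH(t_0,\cdot)$ uniformly on $\wh{S}_{0,\e_2}$ as $t\to t_0$. For $k=0$ this is the maximum principle applied to $h_i(t,\cdot)-h_i(t_0,\cdot)$, whose boundary values tend to $0$ uniformly since $H\in C^0\big((-\e'',\e'')\times W\big)$. For the step from $k$ to $k+1$, consider the difference quotients $Q_s(t,z)=s^{-1}\big(\p_t^kH(t+s,z)-\p_t^kH(t,z)\big)$, holomorphic in $z$ on $\wh{S}_{0,\e_2}$; on the compact set $\p\wh{S}_{0,\e_2}$ they converge uniformly as $s\to0$ (because $\p_t^kH$ is $C^1$ in $t$ uniformly for $z\in W$, using $k+1\le[N_0/2]-2$), so by the maximum principle $\{Q_s\}$ is uniformly Cauchy on $\overline{\wh{S}_{0,\e_2}}$; its uniform limit is holomorphic in $z$, coincides on $W$ with $\p_t^{k+1}H$, and serves as $\p_t^{k+1}H$ on all of $\wh{S}_{0,\e_2}$, with the uniform continuity in $t$ coming from one more application of the maximum principle. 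Since $\e''$ was arbitrary in $(0,\e')$, this gives, for all $k\le[N_0/2]-2$, that $\p_t^kH$ exists and is jointly continuous (being holomorphic in $z$ and a uniform-in-$z$ limit in $t$) on $(-\e',\e')\times\wh{S}_{0,\e_2}$.

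The third step upgrades this to joint $C^{[N_0/2]-2}$-smoothness by differentiating in $z$ under a Cauchy integral: in any local holomorphic chart a $z$-derivative $D_z^\a H(t,z)$ is the iterated Cauchy integral of $H(t,\cdot)$ over a fixed small torus, so $\p_t^jD_z^\a H=D_z^\a\p_t^jH$ is the Cauchy integral of the jointly continuous function $\p_t^jH(t,\cdot)$ and hence itself jointly continuous; since $\bar\p_zH\equiv0$, every mixed partial derivative of $H$ with at most $[N_0/2]-2$ differentiations in $t$ is of this form, which yields $H\in C^{[N_0/2]-2}\big((-\e',\e')\times\wh{S}_{0,\e_2}\big)$ (and in fact $C^\infty$ in $z$). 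I expect the only genuine obstacle to be the first step — producing a $t$-\emph{uniform} collar $W$ of $\p\wh{S}_{0,\e_2}$ on which $H$ is jointly $C^{[N_0/2]-2}$, since the Hartogs regions $E_{t,\e_1}$ move with $t$; this is a soft compactness argument, but it is precisely what legitimizes treating $H$ as a holomorphic family with parameter-smooth boundary data, after which the ``boundary values control the interior'' mechanism does the rest.
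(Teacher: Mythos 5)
Your argument is correct, but it reaches the conclusion by a genuinely different mechanism than the paper. The paper transfers the boundary $t$-regularity into the interior by integral representation: it plugs $h_i(t,\cdot)$ (and, in the inductive step, $\mathcal D_{k-1}\cdots\mathcal D_1(h_i)$ for holomorphic vector fields $\mathcal D_j$ supplied globally by Cartan's Theorem A) into the Bochner--Martinelli formula (\ref{bm}) over the fixed compact boundary $\partial\wh{S}_{0,\e_2}$, differentiates under the integral in $t$ as in (\ref{cau}), and uses the local Cauchy integral (\ref{hol-001}) to see that $t$-derivatives remain holomorphic in $z$; continuity of each mixed derivative then falls out of the kernel representation. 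You instead run the ``boundary controls interior'' principle through the maximum modulus principle on the Stein leaf: differences and $t$-difference quotients of the (holomorphic-in-$z$) extensions are uniformly Cauchy on $\overline{\wh{S}_{0,\e_2}}$ because their boundary values are, the boundary values being governed by property (ii) on (\ref{bd}); the limits are then identified with classical $t$-derivatives, and joint smoothness in $z$ is recovered from local Cauchy integrals exactly as in the paper's (\ref{hol-001}). Your route is more elementary (no Bochner--Martinelli kernel, no appeal to Theorem A), at the cost of the bookkeeping needed to identify the interior-constructed $\partial_t^kH$ with the classical derivative near $\partial\wh{S}_{0,\e_2}$; the paper's route gives explicit integral formulas for the derivatives and makes the induction mechanical. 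One caveat on your first step: the openness of the region (\ref{bd}) in the $(t,z)$-product, hence the existence of a $t$-uniform open collar $W$, is asserted rather than proved (the sets $E_{t,\e_1}$ move with $t$, and their joint openness needs a short continuity argument for the family $\wh S_{t,\e_1}$ composed with the fixed retraction $R$). This is harmless but also avoidable: your induction only ever uses regularity of the classical derivatives on the compact set $[-\e'',\e'']\times\partial\wh{S}_{0,\e_2}$, which lies in (\ref{bd}) by the paper's choice $\partial\wh{S}_{0,\e_2}\subset E_{t,\e_1}$ for $|t|<\e'$, together with the fact that for each fixed $t$ the extension is continuous up to $\overline{\wh{S}_{0,\e_2}}$ (since $E_{t,\e_1}$ is an open neighborhood of the boundary in $\wh S_0$); if you phrase the inductive hypothesis as ``the interior function is continuous up to the closure with boundary values equal to the classical $\partial_t^k H$,'' the collar can be dispensed with entirely, which is in effect what the paper's Bochner--Martinelli argument does.
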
    
\begin{proof}[Proof of Lemma \ref{hsm}]
In what follows, we shall prove by induction on $k\le [N_0/2]-2$ that $h_i(t,z)$ is 
$C^k$-smooth in $(t,z)$ for all $(t,z)\in(-\e',\e')\times\wh{S}_{0,\epsilon_2}$ and $1\leq i\leq m$.

By the Bochner-Martinelli formula (see, for instance, \cite[Theorem 4.3.3]{HEGE}),  for any holomorphic function $f(z)$ on $\wh{S}_{0,\epsilon_2}$ that is continuous on $\ov {\wh{S}_{0,\epsilon_2}}$, we have that
\begin{equation}\label{bm}
f(z)=\int_{\xi\in\partial \wh{S}_{0,\epsilon_2}}f(\xi)\cdot\psi(z,\xi)\cdot\Omega(z,\xi),\,\,\, \forall\,z\in\wh{S}_{0,\epsilon_2}.
\end{equation}
Here $\Omega(z,\xi)$ is a certain $C^{\infty}$-smooth section in the pull-back of the bundle $\wedge^{2n-1} T^*(\wh{S}_{0,\e_1})$ with respect to the map $\wh{S}_{0,\e_1}\times \wh{S}_{0,\e_1}\ni(z,\xi)\mapsto \xi\in \wh{S}_{0,\e_1}$; 
$\psi(z,\xi)$ is a certain at least $C^2$-smooth function on $(\wh{S}_{0,\e_1}\times \wh{S}_{0,\e_1})\setminus\{(z,z)|z \in \wh{S}_{0,\e_1}\}$ (see \cite[Lemma 4.2.4]{HEGE}).

Setting $f=h_i(t,z)$ in (\ref{bm}), it is clear that $h_i(t,z)$ is continuous on $(-\e',\e')\times \wh{S}_{0,\epsilon_2}$. Taking its $t$-derivative, we derive that 
\begin{equation}\label{cau}
\frac{\partial h_i(t,z)}{\partial t}=\int_{\xi\in\partial \wh{S}_{0,\epsilon_2}}\frac{\partial h_i(t,\xi)}{\partial t}\cdot\psi(z,\xi)\cdot\Omega(z,\xi),\,\,\,\forall\,(t,z)\in(-\e',\e')\times\wh{S}_{0,\epsilon_2}.
\end{equation}
Hence $\frac{\partial h_i(t,z)}{\partial t}$ is continuous on $(-\e',\e')\times \wh{S}_{0,\epsilon_2}$ by 
property (ii).  

For any $z\in\wh{S}_{0,\epsilon_2}$, in a local holomorphic chart, take a polydisc neighborhood $B(\eta_1,r)\times\cdots\times B(\eta_n,r)$ of $z$ in $\wh{S}_{0,\epsilon_2}$ with coordinates $z=(z_1,\cdots,z_n)$. Since $h_i(t,z)$ is holomorphic in $z$, the Cauchy integral formula yields that 
\begin{equation}\label{hol-001}
\begin{split}
h_i(t,z)=\frac{1}{2\pi i}\int_{|\xi_n-\eta_n|=r}\cdots\int_{|\xi_n-\eta_n|=r} \frac{h_i(t,\xi_1,\cdots,\xi_n)}{(\xi_1-z_1)\cdots(\xi_n-z_n)} d\xi_1\cdots d\xi_n,\,\,\,\forall\,t\in(-\e',\e').   
\end{split}    
\end{equation}
It is clear that $\frac{\partial h_i(t,z)}{\partial z_j}$ is continuous in $(t,z)$. Hence,  $h(t,z)$ is  $C^1$-smooth in $(t,z)$. 

Suppose that $h_i(t,z)$ is $C^{k-1}$-smooth in $(t,z)$ for  $(t,z)\in(-\e',\e')\times\wh{S}_{0,\epsilon_2}$. We proceed to show that $h_i(t,z)$ is $C^{k}$-smooth for  $(t,z)\in(-\e',\e')\times\wh{S}_{0,\epsilon_2}$. 

Differentiating (\ref{hol-001}) with respect to $t$, we first conclude that $\frac{\partial h_i(t,z)}{\partial t}$ is holomorphic in $z$. 
It is clear that for any holomorphic vector field $\mathcal D$ on $\wh{S}_{0,\epsilon_1}$, $\mathcal D(h_i)(t,z)$ is holomorphic in $z$. By setting $f=\mathcal D(h_i)(t,z)$ in (\ref{bm}), we conclude by property (ii) that $\mathcal D(h_i)(t,z)$ is continuous on $(-\e',\e')\times \wh{S}_{0,\epsilon_2}$. 
Since any $z$-derivative at $(t,z)\in(-\e',\e')\times \wh{S}_{0,\epsilon_1}$ is generated by a holomorphic vector field on $\wh{S}_{0}$ by Cartan's Theorem A,  it suffices to prove that $\mathcal D_{k}(\mathcal D_{k-1}\cdots(\mathcal D_1(h_i)))$
is continuous on $(-\e',\e')\times \wh{S}_{0,\epsilon_2}$, where each $\mathcal D_j$, $1\leq j\leq k$, is either $\frac{\partial}{\partial t}$ or a holomorphic vector field defined on $\wh{S}_{0,\e_1}$. By induction, $\mathcal D_{k-1}\cdots(\mathcal D_1(h_i))$ is holomorphic in $z$ for $(t,z)\in(-\e',\e')\times\wh{S}_{0,\epsilon_2}$, and $C^{[N_0/2]-k-1}$-smooth in $(t,z)$ for $(t,z)$ in (\ref{bd}). By the same argument as above, we  see that $\mathcal D_{k}(\mathcal D_{k-1}\cdots(\mathcal D_1(h_i)))$ is continuous on $(-\e',\e')\times \wh{S}_{0,\epsilon_2}$. 

The proof of Lemma \ref{hsm} is complete.\end{proof}

Next we proceed to prove that $\widehat S$ is a $C^{[N_0/2]-2}$-smooth real hypersurface near $\widehat S_0$. 

We claim that the Jacobian matrix of map $H(t,z)$ is of full rank, when $t=0$ and $z\in\wh{S}_{0,\epsilon_2}$. Otherwise, assume that $H(t,z)$ is degenerate at $(0,z^*)$ with $z^*\in\wh{S}_{0,\epsilon_2}$. Take local holomorphic coordinates $(z_1,\cdots,z_{n+1})$ in a neighborhood of $z^*\in\mathcal M$ such that $z^*=(z_1,\cdots,z_n,0)$ and $$H(0,z_1,z_2,\cdots,z_n,0)=(\wt{h}_1(0,z_1,z_2,\cdots,z_n,0),\cdots, \wt{h}_{n+1}(0,z_1,z_2,\cdots,z_n,0))\equiv(z_1,\cdots,z_n,0).$$  Here each $\wt{h}_j$ is one of the functions $h_1,\cdots,h_m$.
Then computation yields that $H$ is degenerate at $(0,z^*)$ if and only if $\frac{\partial \wt{h}_{n+1}(t,z)}{\partial t}|_{(0,z^*)}=0$. Since $\frac{\partial \wt{h}_{n+1}(t,z)}{\partial t}|_{t=0}$ is holomorphic in $z$ as shown above, we conclude that the degenerate locus $\mathcal J:=\{z\in\wh{S}_{0,\e_2}:H{\rm\,\, is\,\, degenerate\,\, at\,\,}(0,z)\}$
is a nonempty complex analytic subvariety of $\widehat S_{0,\e_2}$ of codimension one. Thus $\mathcal J$ must intersect $\partial \wh{S}_{0,\e_2}$, for $\dim_{\mathbb C}\mathcal J\geq1$ and $\wh S_{0,\e_2}$ is Stein,  which is a contradiction.


Therefore, there exists $0 <\epsilon \ll 1$ such that for $|t|<\e$, $\widehat S_{\e_2}=\cup_{t\in (-\e,\e)} \widehat S_{t,\e_2}$ is the image of a $C^{[N_0/2]-2}$-smooth embedding and thus is a real hypersurface with the same smoothness. Also the function which assigns points in $\wh{S}_{t,\e_2}$ to $t$ is the $t$ component of $H^{-1}$ and thus is $C^{[N_0/2]-2}$-smooth on $\wh{S}_{\e_2}$ with $dt|_{\wh{S}_{\e_2}}\not =0.$ Similar statements  hold on $\O_\e$ due to the construction of $\O_\e$.
This completes the proof of Theorem \ref{openness} when $N_0\le \infty$.
\smallskip

When $N_0=\omega$,
all $h_i(t,z)$ are real analytic in $(t,z)$ on (\ref{bd}). To prove that $H(t,z)$ is real analytic on $(-\e^{\prime},\e^{\prime})\times \wh{S}_{0,\epsilon_2}$, we can complexify $h_i(t,z)$ and then apply the Hartogs extension on 
\begin{equation*}
\{(\widehat t,z):|\widehat t|<\e^{\prime},\,\,z\in \wh{S}_{0,\epsilon_2}\cap\Omega_{\epsilon}\}.
 \end{equation*}


The proof of Theorem \ref{openness} is complete. \end{proof}

Applying the Baouendi-Treves approximation theorem, one  would  be able  to improve the up-do-boundary  regularity of $\wh{S}$ to $C^{N_0-3}$-smoothness. For simplicity of exposition, we will be content here with the $C^{[N_0/2]-2}$-smoothness of $S$, which    is what we need  for the proof of Theorem \ref{maintheorem-global}   as we do lose about half of the regularity near  CR singularities. (See Remark \ref{optimal-001}.)

\section{Closeness:\  Smoothness  of a complex analytic variety  with a standard contact sphere as its boundary}
 
The result in the previous section serves as the stability part in the proof of Theorem \ref{maintheorem-global}. We provide in this section another key tool when we deform the constructed Stein submanifolds away from elliptic CR singular points in our proof of Theorem \ref{maintheorem-global}.   Our argument here is mainly along the lines of symplectic geometry. The main purpose is to prove  Theorem \ref{nosing}, which may find applications in other related problems.

Recall that a co-oriented contact manifold consists of an odd-dimensional manifold $Y^{2n-1}$ and a codimension one subbundle $\xi\subset TY$ such that there exists a one-form $\alpha$ satisfying $\xi=\ker \alpha$, $\alpha \wedge (d \alpha)^{n-1}\ne 0$, and $\alpha$ gives the co-orientation on $TY/\xi$. 
In particular, $\alpha \wedge (d \alpha)^{n-1}$ gives an orientation of $Y$.  A (co-oriented) contactomorphism is a diffeomorphism preserving the contact structure as well as the co-orientation. 

It is clear that the boundary of a strongly pseudoconvex domain has a contact structure that is naturally induced from its CR structure. 
More precisely, let $\rho$ be a smooth strongly plurisubharmonic function on a complex manifold $(W,J)$. Then any regular level set of $\rho$ is equipped with the contact structure $\xi:=T\rho^{-1}(R)\cap J T\rho^{-1}(R) = \ker(-d^{c}\rho|_{\rho^{-1}(R)})$, where $d^{c}\rho(X):=d\rho(JX)$ for $X\in TW$.  In particular, if $W=\mathbb C^n$ and $\rho=|z|^2$, we get a family of contact manifolds each of which is contactomorphic to the standard contact sphere $(S^{2n-1},\xi_{std})$.

\begin{definition}
Given two smooth contact manifolds $(Y_-,\xi_-)$ and $(Y_+,\xi_+)$, a Liouville cobordism $(W,\lambda)$ from $(Y_-,\xi_-)$ to $(Y_+,\xi_+)$ consists of an oriented smooth  manifold $W$ with boundary  and a Liouville form $\lambda\in \Omega^1(W)$ such that the following holds.
    \begin{enumerate}[label={\rm (\roman*)}]
        \item $d \lambda$ is a symplectic form on $W$;
        \item $\partial W = (-Y_-)\sqcup Y_+$ as oriented manifolds;
        \item the Liouville vector field $X$, defined by $\iota_X d \lambda = \lambda$, points outward along $Y_+$ and points inward along $Y_-$;
        \item $\ker (\lambda|_{Y_{\pm}}) = \xi_{\pm}$.
    \end{enumerate}
    A Liouville cobordism from $\emptyset$ to $Y$ is called a Liouville filling of $Y$.
\end{definition}
\begin{example}\label{level}
Let $\rho$ be a smooth strongly plurisubharmonic function on a complex manifold $(W,J)$. Let $R_1<R_2$ be two regular values of $\rho$ such that $\rho^{-1}([R_1,R_2])$ is compact. Then $\rho^{-1}([R_1,R_2])$ is a Liouville cobordism from the contact manifold $\rho^{-1}(R_1)$ to the contact manifold $\rho^{-1}(R_2)$ where the Liouville form is given by $-d^{c}\rho$. The Liouville vector field is the gradient vector field of $\rho$ with respect to the metric $-d d^{c}\rho(\cdot, J\cdot)$.
\end{example}

The notion of Liouville cobordism is flexible enough to glue cobordisms as follows.
\begin{lemma}\label{prop:glue}
Let $(U,\lambda_U)$ and $(V,\lambda_V)$ be  Liouville cobordisms from $(Y_-,\xi_-)$ to $(Y_+,\xi_+)$ and from $(Z_-,\eta_-)$ to $(Z_+,\eta_+)$, respectively. Assume that there is a contactomorphism $\phi:(Y_+,\xi_+) \to (Z_-,\eta_-)$. Then there exists a Liouville cobordism $(W,\lambda)$ from $(Y_-,\xi_-)$ to $(Z_+,\eta_+)$ such that $W$ has the homotopy type of 
$$U\cup_{\phi} V := U\sqcup V/\{x\simeq \phi(x),\,\,\forall\, x\in Y_+\}.$$
\end{lemma}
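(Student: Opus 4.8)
\textbf{Proof proposal for Lemma \ref{prop:glue}.}
The plan is to realize the abstract gluing by first deforming the two Liouville forms near the matching boundary into a standard collar (contactization) model, and then literally gluing along a diffeomorphism that is the time-one map of the Liouville flow composed with $\phi$. First I would invoke the standard fact that near $Y_+$ the Liouville vector field $X_U$ of $(U,\lambda_U)$ is outward-pointing and transverse, so its backward flow gives a collar neighborhood $(-\varepsilon,0]\times Y_+$ of $Y_+$ in $U$ on which $\lambda_U = e^s\,(\lambda_U|_{Y_+})$, where $s$ is the collar coordinate; symmetrically, near $Z_-$ the Liouville vector field $X_V$ of $(V,\lambda_V)$ is inward-pointing, giving a collar $[0,\varepsilon)\times Z_-$ on which $\lambda_V = e^s\,(\lambda_V|_{Z_-})$. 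Here $\alpha_+ := \lambda_U|_{Y_+}$ and $\alpha_- := \lambda_V|_{Z_-}$ are contact forms for $\xi_+$ and $\eta_-$ respectively.

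Next I would use the contactomorphism $\phi:(Y_+,\xi_+)\to (Z_-,\eta_-)$. Pulling back, $\phi^*\alpha_-$ is a contact form for $\xi_+$, so there is a positive function $g:Y_+\to \mathbb R_{>0}$ with $\phi^*\alpha_- = g\,\alpha_+$. The point is that a reparametrization of the collar coordinate absorbs $g$: on the collar of $Y_+$, change coordinates by $(s,y)\mapsto (s + \log g(y), y)$ (equivalently, rescale using the Liouville flow, which is conformally symplectic), after which $\lambda_U$ becomes $e^{s}\,\phi^*\alpha_-$ in the new collar coordinate on a possibly smaller collar $(-\varepsilon',0]\times Y_+$. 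Now both collars carry the \emph{same} local model $e^s\alpha_-$ (via $\phi$ on the $Y_+$-factor), so I define $W := U\cup_{\Phi} V$ where $\Phi$ identifies the collar $(-\varepsilon',0]\times Y_+ \subset U$ with $(-\varepsilon',0]\times Z_- \subset V$ by $(s,y)\mapsto (s,\phi(y))$ (shifting $V$'s collar coordinate by $-\varepsilon'$ so the overlaps match). The one-forms $\lambda_U$ and $\lambda_V$ agree on the overlap by construction, hence glue to a global one-form $\lambda$ on $W$; $d\lambda$ is symplectic since it is so on each piece; the Liouville vector field is $\partial_s$ on the collar, so it still points inward along $Y_- = \partial_- W$ and outward along $Z_+ = \partial_+ W$, and $\ker(\lambda|_{Y_-}) = \xi_-$, $\ker(\lambda|_{Z_+}) = \eta_+$ are unchanged. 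Finally, $W$ deformation retracts onto $U\sqcup V$ with the collar $(-\varepsilon',0]\times Y_+$ collapsed to $Y_+\simeq\phi(Y_+)$, which is exactly the stated homotopy type $U\cup_\phi V$.

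The main obstacle, and the only place where real care is needed, is the collar normalization step: I must check that the backward (resp. forward) Liouville flow genuinely produces a collar on which $\lambda$ has the exact exponential form $e^s\alpha_\pm$ — this uses $\mathcal L_X\lambda = \lambda$ so that the flow $\phi^t_X$ satisfies $(\phi^t_X)^*\lambda = e^t\lambda$ — and that the reparametrization absorbing the conformal factor $g$ is smooth and stays within the collar, which forces shrinking $\varepsilon$ to $\varepsilon'$ so that $s+\log g(y)$ lands in the domain. Everything else (gluing of forms, nondegeneracy of $d\lambda$, behavior of $X$ at the outer boundary components, the homotopy equivalence) is then formal. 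One should also note that the construction of $W$ as a smooth manifold with boundary requires the identification $\Phi$ to be a diffeomorphism of open collar neighborhoods, which it is by construction.
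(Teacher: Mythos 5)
Your collar step and the final gluing-when-forms-match step are exactly the paper's (the Liouville flow and $\mathcal L_X\lambda=\lambda$ give $\Phi^*\lambda=e^s\alpha$ on a collar, and the glued one-form is then smooth), and the homotopy-type claim is fine. The genuine gap is in the step where you "absorb" the conformal factor $g$ with $\phi^*\alpha_-=g\,\alpha_+$. First, a small point: the shift must be $s'=s-\log g(y)$, since $(s,y)\mapsto(s+\log g(y),y)$ turns $e^s\alpha_+$ into $e^{s'}g^{-1}\alpha_+$, not $e^{s'}\phi^*\alpha_-$. Much more seriously, the correct shift does give $\lambda_U=e^{s'}\phi^*\alpha_-$, but it moves the boundary $Y_+=\{s=0\}$ to the \emph{graph} $\{s'=-\log g(y)\}$, which is not a level set of $s'$ unless $g\equiv1$. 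So there is no collar of the form $(-\varepsilon',0]\times Y_+$ in the new coordinate whose $s'=0$ slice is $\partial U$, and your proposed identification $(s,y)\mapsto(s,\phi(y))$ does not identify a neighborhood of $Y_+\subset\partial U$ with a neighborhood of $Z_-\subset\partial V$; moreover the auxiliary shift of $V$'s collar by $-\varepsilon'$ introduces an extra constant conformal factor $e^{\varepsilon'}$, so the forms still fail to agree on the overlap. One cannot repair this by merely re-choosing the collar embedding either: a parametrization with $\Psi^*\lambda_U=e^{s}\phi^*\alpha_-$ and $\Psi(0,\cdot)$ mapping onto $Y_+$ would force a diffeomorphism $\psi$ of $Y_+$ with $\psi^*\alpha_+=g\,\alpha_+$, which does not exist for general $g$.

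The missing ingredient is a genuine modification of the cobordism near the seam, which is exactly what the paper supplies: after rescaling $\lambda_U$ to $a\lambda_U$ with $0<a\ll1$ so that $\phi^*\alpha_{Z_-}=e^{f}\alpha_{Y_+}$ with $f>0$, one inserts the compact piece $\{(t,x):0<t<f(x)\}$ of the symplectization $(\mathbb{R}\times Y_+, d(e^t\alpha_{Y_+}))$, a topologically trivial Liouville cobordism from $(Y_+,\ker\alpha_{Y_+})$ to $(Y_+,\ker(e^{f}\alpha_{Y_+}))$, and then glues at both ends where the forms now literally match. (Equivalently, one may attach to $U$ the region of the symplectization below the graph of $f$ and glue boundary-to-boundary along that graph.) Your write-up in effect assumes the graph of $\log g$ can be flattened to the zero level without changing the underlying manifold, and that is the step that fails; with the symplectization insertion added, the rest of your argument goes through and coincides with the paper's proof.
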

\begin{proof}[Proof of Lemma \ref{prop:glue}]
Denote by $X_U$, $X_V$ the Liouville vector fields on $U$, $V$, respectively. Since $X_U$ (resp.~$X_V$) is transversal to $Y_+$ (resp.~$Z_-$),  flowing along $X_U$ (resp.~$X_V$)  yields a diffeomorphism
$$\Phi_U: [-\epsilon,0]\times Y_+ \longrightarrow \overline U_{\epsilon}\subset U \quad ({\rm resp.\,\,}\Phi_V: [0,\epsilon]\times Z_- \longrightarrow \overline V_{\epsilon}\subset V) $$
such that
\begin{equation}
\left\{\begin{aligned}
\frac{\partial\Phi_U(t,x)}{\partial t}&=X_U(\Phi_U(t,x))\\
\Phi_U(0,x)&=x
\end{aligned}\right.\quad \left({\rm resp.\,\,}\left\{\begin{aligned}\frac{\partial \Phi_V(t,x)}{\partial t}&=X_V(\Phi_V(t,x))\\
\Phi_V(0,x)&=x\end{aligned}\right.\right).    
\end{equation}
Here $U_{\epsilon}$, $V_{\epsilon}$ are certain neighborhoods of $Y_+$, $Z_-$  in $U$, $V$, respectively.

We write $\alpha_{Y_+} = \lambda_U|_{Y_+}$ and $\alpha_{Z_-} = \lambda_V|_{Z_-}$. Since $\iota_{X_U}\lambda_U=\iota_{X_U}(\iota_{X_U}d \lambda_U)=0$, pulling back  under $\Phi_U$, we have  $\iota_{\frac{\partial}{\partial t}}(\Phi_U^*(\lambda_U))=0$. Thanks to the Cartan formula, 
\begin{equation}\label{cf}
\mathcal L_{X_U}\lambda_U = \iota_{X_U} d \lambda_U+ d(\iota_{X_U}\lambda_U)=\lambda_U,    
\end{equation}
where $\mathcal L$ is the Lie derivative.   We thus derive that $\Phi_U^*(\lambda_U) = e^t\alpha_{Y_+}$.
Similarly,  $\Phi_V^*(\lambda_V) = e^t\alpha_{Z_-}$. 

If $\phi^*\alpha_{Z_-}=\alpha_{Y_+}$, we define the cobordism by $W:=U\cup_{\phi} V$.   It is clear that the following local coordinate charts define a smooth structure on $W=U\cup_{\phi} V$:  
\begin{enumerate}[label={\rm (\roman*)}]
\item local coordinate charts of $U$ (resp.~$V$) that do not intersect $Y_+$ (resp.~$Z_-$), are considered as local coordinate charts of $W$;

\item the map $(-\epsilon,\epsilon)\times Y_+ \longrightarrow U\cup_{\phi} V$ defined by
\begin{equation*}
\left\{\begin{aligned}
(t,x)&\mapsto \Phi_U(t,x),\,\,\,\,\,\,\,\,\,\,\,\, t\le 0\\
(t,x)&\mapsto \Phi_V(t,\phi(x)),\,\,\,\,\,t\ge 0
\end{aligned}\right.    
\end{equation*}
is declared to be smooth, that is, the image of local coordinate charts of $(-\epsilon,\epsilon)\times Y_+ $ are considered as local coordinate charts of $W$.
    \end{enumerate}
Then it is easy to verify that the one form $\lambda$ on $W$ given by
\begin{equation*}
\lambda|_U=\lambda_U,\,\,\lambda|_V=\lambda_V,\,\,\lambda|_{(-\epsilon,\epsilon)\times Y_+}=e^t\alpha_{Y_+}.    
\end{equation*}
is smooth. In particular, $(W,\lambda)$ defines a Liouville cobordism from $(Y_-,\xi_-)$ to $(Z_+,\eta_+)$.

In general, we have $\phi^*\alpha_{Z_-}= e^f\alpha_{Y_+}$ where $f:Y_+\to \RR$ is a smooth function. Changing $\lambda_U$ to $a\lambda_U$ for $0<a\ll 1$, we may assume that $f>0$. Let $((-\infty,+\infty)\times Y_+, d(e^t\alpha_{Y_+}))$ be the symplectization of $(Y_+,\ker \alpha_{Y_+})$, where $e^t\alpha_{Y_+}$ is a Liouville form with the Liouville vector $\frac{\partial}{\partial t}$. Note that the open submanifold $\{(t,x):0< t < f(x)\}$ of $((-\infty,+\infty)\times Y_+, d(e^t\alpha_{Y_+}))$ is a Liouville cobordism from $(Y_+,\ker \alpha_{Y_+})$ to $(Y_+,\ker (e^f\alpha_{Y_+}))$. We glue this topologically trivial cobordism with $U$ and $V$ at the two ends as above, for the one forms match now. This yields a Liouville cobordism from $(Y_-,\xi_-)$ to $(Z_+,\eta_+)$ which has the homotopy type of $U\cup_{\phi} V$. \end{proof}

\begin{figure}[htb] {\small
\begin{center}
\begin{overpic}[scale=0.4]
{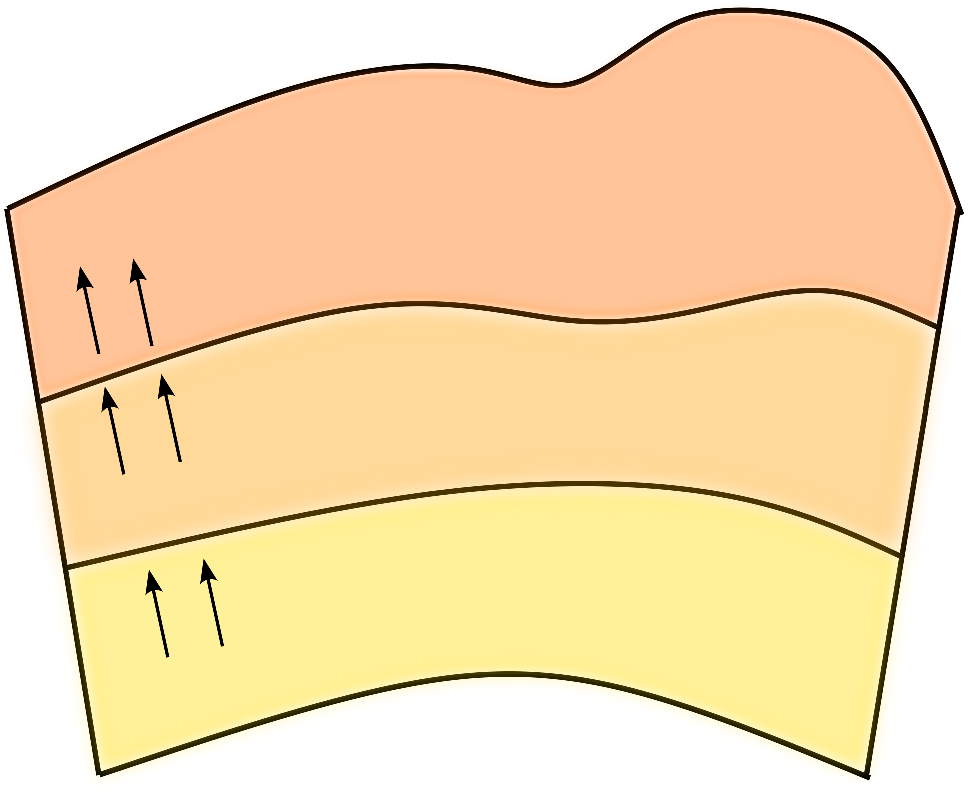}
\put (90,34) {\tiny $(U,\lambda_U)$}
\put (80,51) {\tiny $(Y_+,\alpha_{Y_+})$}
\put (90,120) {\tiny $(V,\lambda_V)$}
\put (80,14) {\tiny $(Y_-,\alpha_{Y_-})$}
\put (50,70) {\tiny $(\{(t,x)|0<t<f(x),x\in Y_+\}, e^t\alpha_{Y_+})$}
\put (80,145) {\tiny $(Z_+,\alpha_{Z_+})$}
\put (43,103) {\tiny $(Z_-,\alpha_{Z_-})=(\phi(Y_+),(\phi^{-1})^*(e^f\alpha_{Y_+}))$}
\put (27,60) {\tiny $\partial_t$}
\put (18,105) {\tiny $X_V$}
\put (32,22) {\tiny $X_U$}
\end{overpic}
\end{center}}
\caption{Gluing of Liouville cobordisms}
\label{fig:Gluing}
\end{figure}

Let $f$ be a holomorphic function defined in a neighborhood 
$U\subset\mathbb C^n$ of  $p$. Assume that $p$ is an isolated singularity of $Z_f:=\{z\in U|f(z)=0\}$, which then is normal when $n\geq 3$ by Serre's criterion.  Define the Milnor link of  $p$ by
\begin{equation}\label{link}
L_{f,p}:=Z_f\cap S(p,\epsilon):=Z_f\cap\{z\in\mathbb C^n:|z-p|^2=\epsilon^2\}, 
\end{equation}
where $S(p,\epsilon)$ is a sufficiently small Euclidean sphere centered at $p$ with radius ${\epsilon}$. For $0<\epsilon\ll1$, $L_{f,p}$ is a compact, strongly pseudoconvex, real-analytic hypersurface of $Z_f$ (see \cite[Propostion 2.4]{looi}),  and thus has a contact structure naturally induced from its CR structure. For simplicity, we call $L_{f,p}$ the contact link of $p$. Note that the standard contact sphere $(S^{2n-1},\xi_{std})$ can be viewed as the contact link of a smooth point. 

Contact links are independent of the choice of $\epsilon$. 
Moreover, the following lemma shows that they can be defined by more general  distance functions.
\begin{lemma}\label{dist} 
Let $\rho_1:U\subset\mathbb C^n\rightarrow [0,\infty)$ be a real-analytic strongly plurisubharmonic function such that $\rho_1^{-1}(0)=\{p\}$ and $\rho_1^{-1}([0,\epsilon_1])$ is compact for a certain  $\epsilon_1>0$. 
Then,  $Z_f\cap \rho_1^{-1}({\epsilon}^2)$ is a compact contact manifold associated with its CR structure,  for any sufficiently small positive number $\epsilon$. Moreover $Z_f\cap \rho_1^{-1}({\epsilon}^2)$ is contactomorphic to $Z_f\cap S(p,\epsilon)$.
\end{lemma}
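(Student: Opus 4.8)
The plan is to deduce Lemma~\ref{dist} from Lemma~\ref{dist}'s hypotheses by interpolating between the two strongly plurisubharmonic exhaustions $\rho_0 := |z-p|^2$ and $\rho_1$ near $p$, and then applying the Gray stability / Liouville cobordism machinery already set up in Example~\ref{level} and Lemma~\ref{prop:glue}. First I would record that, since both $\rho_0$ and $\rho_1$ are real-analytic, strongly plurisubharmonic, vanish exactly at $p$, and are proper near $p$, for $0<\epsilon\ll 1$ the sets $L_0 := Z_f\cap\rho_0^{-1}(\epsilon^2)$ and $L_1 := Z_f\cap\rho_1^{-1}(\epsilon^2)$ are smooth compact hypersurfaces of $Z_f\setminus\{p\}$ (using \cite[Proposition 2.4]{looi} for $\rho_0$, and the analogous transversality argument for $\rho_1$: the set of critical values of $\rho_1|_{Z_f\setminus\{p\}}$ is a proper real-analytic subvariety of $(0,\epsilon_1^2)$, hence discrete, so all small positive values are regular). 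Each carries the contact structure $\xi := T L_i\cap J\,T L_i = \ker(-d^c\rho_i|_{L_i})$, and this is a genuine contact structure precisely because $\rho_i$ is strongly plurisubharmonic, i.e. $-dd^c\rho_i(\cdot,J\cdot)>0$.

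Next I would build a Liouville cobordism inside $Z_f\setminus\{p\}$ with boundary $(-L_0)\sqcup L_1$ (up to relabeling the co-orientations), which by the symplectization rigidity of contact boundaries of Liouville cobordisms forces $(L_0,\xi)$ and $(L_1,\xi)$ to be contactomorphic. Concretely: choose a function $\rho_t := (1-\chi)\rho_0 + \chi\rho_1$ on $Z_f$ near $p$, where $\chi$ is a cutoff; since strong plurisubharmonicity is a convex condition, a suitable convex combination $\rho := \max\{\rho_0,c\,\rho_1\}$-type construction — more carefully, a smoothed maximum, or simply noting that on the compact region between $\rho_0^{-1}(\epsilon_0^2)$ and $\rho_1^{-1}(\epsilon_1'^2)$ one can pick a single strongly plurisubharmonic $\psi$ that equals $\rho_0$ near its inner level and equals $\rho_1$ near its outer level — produces a strongly plurisubharmonic exhaustion interpolating the two. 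Then Example~\ref{level} applied to $\psi|_{Z_f\setminus\{p\}}$ (noting $Z_f\setminus\{p\}$ is a smooth complex manifold, and $\psi^{-1}([R_1,R_2])$ is compact since it avoids $p$) gives a Liouville cobordism from one level set to the other; its Liouville form restricts on each boundary to a contact form for the respective $\xi$. Flowing along the Liouville vector field as in the proof of Lemma~\ref{prop:glue} (equations \eqref{cf}) identifies a collar of each boundary with a piece of the symplectization, and composing the two collar identifications across the cobordism yields the desired contactomorphism $(L_1,\xi)\xrightarrow{\ \sim\ }(L_0,\xi) = (Z_f\cap S(p,\epsilon),\xi_{std}$ in the smooth-point case).

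The first assertion — that $Z_f\cap\rho_1^{-1}(\epsilon^2)$ is itself a compact contact manifold — follows from the same transversality and strong plurisubharmonicity observations, independent of the comparison; so I would state that first, then the contactomorphism. I would also remark that the independence of $\epsilon$ for the original Milnor link (already asserted in the paragraph preceding the lemma) is the special case $\rho_1 = \rho_0$ with two different radii, and is subsumed by this argument. The main obstacle I anticipate is the bookkeeping in constructing the interpolating strongly plurisubharmonic function so that it literally coincides with $\rho_0$ near the inner level set and with $\rho_1$ near the outer level set while remaining strongly plurisubharmonic throughout — the naive convex combination $(1-\chi)\rho_0+\chi\rho_1$ is strongly plurisubharmonic (convex combinations of spsh functions are spsh, and $-dd^c$ of the cutoff terms contributes lower-order errors controlled by taking $\epsilon$ small so that $\rho_0,\rho_1$ and their gradients are tiny), but one must check the level sets of the interpolation remain regular and compact; alternatively one avoids this entirely by using that $\rho_0$ and $\rho_1$ are both proper spsh near $p$ and invoking a standard gluing of Stein/Liouville structures. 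Once the cobordism exists, the contactomorphism is automatic from the collar argument already written in the proof of Lemma~\ref{prop:glue}.
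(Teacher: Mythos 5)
The central step of your argument does not work: a Liouville cobordism between two contact manifolds does \emph{not} imply that its two ends are contactomorphic. The collar identifications coming from the Liouville flow (as in the proof of Lemma \ref{prop:glue}) only identify a neighborhood of each boundary component with a piece of the symplectization of \emph{that same} boundary; they do not compose ``across'' the cobordism unless the Liouville vector field is nowhere vanishing on the whole cobordism and every flow line runs from the negative to the positive end, i.e.\ unless the cobordism is already a trivial flow cobordism. (Indeed, in this very paper the cobordism $\rho^{-1}([-1+\epsilon,0])$ goes from the Milnor links to the standard sphere, and those are in general not contactomorphic.) So the sentence ``once the cobordism exists, the contactomorphism is automatic from the collar argument'' is false, and your fallback plan collapses. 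To salvage your approach you would need the interpolating strongly plurisubharmonic function $\psi$ to have \emph{no critical points on $Z_f$} in the region between the two level sets, so that its gradient/Liouville flow trivializes the cobordism; ensuring this is exactly the delicate point you wave at as ``bookkeeping,'' and a spatial cutoff $(1-\chi)\rho_0+\chi\rho_1$ does not obviously achieve it, since the $d\chi$ terms (or a genuine clash of the two gradients restricted to $Z_f$) can create critical points no matter how small $\epsilon$ is.

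The paper resolves precisely this issue differently: it interpolates in an external parameter rather than in space, setting $\rho_t=(1-t)\rho_0+t\rho_1$ (each automatically strongly plurisubharmonic), and invokes the Curve Selection Lemma (via \cite[Proposition 2.5]{looi}) to show that $d(\rho_0|_{Z_f\setminus\{p\}})$ and $d(\rho_1|_{Z_f\setminus\{p\}})$ never point in opposite directions near $p$; hence $d\rho_t|_{Z_f\setminus\{p\}}\neq0$ for all $t\in[0,1]$ and each $Z_f\cap\rho_t^{-1}(\epsilon^2)$ is a compact smooth strongly pseudoconvex hypersurface. This produces a smooth one-parameter family of contact manifolds joining $Z_f\cap\rho_1^{-1}(\epsilon^2)$ to $Z_f\cap S(p,\epsilon)$, and the contactomorphism follows from the Gray stability theorem \cite{Gr2} --- no cobordism is needed. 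A secondary, smaller gap in your write-up: for the first assertion you argue that the critical values of $\rho_1|_{Z_f\setminus\{p\}}$ form a discrete subset of $(0,\epsilon_1^2)$, but discreteness in the open interval does not preclude accumulation of critical values at $0$; ruling that out again requires a Curve Selection Lemma/Milnor-type argument, which is the ingredient your proposal never engages.
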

\begin{proof}[Proof of Lemma \ref{dist}]
It is clear that  $Z_f\cap \rho_1^{-1}({\epsilon}^2)$ is a compact contact manifold for $0<\epsilon\ll1$. 

In what follows, we shall connect $Z_f\cap \rho_1^{-1}({\epsilon}^2)$ with $Z_f\cap S(p,\epsilon)$ by a smooth family of contact manifolds. Write $\rho_0=|z-p|^2$. We define a family of real-analytic strongly plurisubharmonic functions on $U$ by $\rho_t:=(1-t)\rho_0+t\rho_1$ for $0\leq t\leq 1$. Then there is an 
$\tilde\epsilon>0$ such that $Z_f\cap \rho_t^{-1}(\epsilon^2)$ is compact for $0\leq t\leq 1$, $0<\epsilon<\tilde\epsilon$.
Moreover, by  the Curve Selection Lemma, there is a neighborhood $V$ of $p$ in $Z_f$ such that at no point of $V\setminus\{p\}$, $d(\rho_0|_{V\setminus\{p\}})$ and $d(\rho_1|_{V\setminus\{p\}})$ point in opposite directions, that is, if $d(\rho_0|_{V\setminus\{p\}})$ and $d(\rho_1|_{V\setminus\{p\}})$ are proportional, then the
factor of proportionality is never negative
(see \cite[Propostion 2.5]{looi}). Then there is a  positive constant $\hat\epsilon$ such that $Z_f\cap \rho_t^{-1}(\epsilon^2)$ is a compact smooth hypersuface of $Z_f$ for $0\leq t\leq 1$, $0<\epsilon<\hat\epsilon$.



Now by the Gray stability theorem \cite{Gr2}, we conclude that  $Z_f\cap \rho_1^{-1}(\epsilon)$ is contactomorphic to $Z_f\cap S(p,\epsilon)$ for $0<\epsilon\ll1$. \end{proof}

\begin{proposition}\label{prop:Milnor}  Let $L_{f,p}$ be the contact link of a non-smooth isolated hypersurface singularity $p$  as defined by (\ref{link}). Then there is a Liouville filling of $L_{f,p}$ that has the homotopy type of a bouquet $\vee_k S^n$ with $k\ge 1$.
\end{proposition}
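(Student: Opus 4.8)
The plan is to invoke the classical Milnor fibration picture together with the Liouville gluing machinery of \Cref{prop:glue}. Recall the basic setup: for $0<\epsilon\ll 1$ and then $0<\eta\ll\epsilon$, the Milnor fibration expresses $S(p,\epsilon)\setminus L_{f,p}$ as a fiber bundle over $S^1$, and the closed Milnor fiber $F$ (a fiber of $f/|f|$ truncated inside the ball, or equivalently $\{f=\delta\}\cap \overline{B(p,\epsilon)}$ for $0<|\delta|\ll\epsilon$) is a compact complex $n$-manifold with boundary which, by Milnor's theorem, has the homotopy type of a bouquet $\vee_k S^n$ where $k\ge 1$ is the Milnor number (it is $\ge 1$ precisely because $p$ is not a smooth point). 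The boundary $\partial F$ is diffeomorphic to $L_{f,p}$, and in fact one can arrange $\partial F = L_{f,p}$ as contact manifolds: $F$ carries a Stein (Weinstein) structure, being a smooth affine variety intersected with a ball, and its contact boundary is the contact link.

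First I would make $F$ into a genuine Liouville (indeed Weinstein) filling $(F,\lambda_F)$ of $L_{f,p}$: take the restriction of $|z-p|^2$, or more carefully a plurisubharmonic Morse function on the smooth variety $\{f=\delta\}$ adapted to the ball $\overline{B(p,\epsilon)}$, so that $F$ is presented as in \Cref{level} as a sublevel set, with Liouville form $-d^c\rho$. That $\partial F$ with this induced contact structure is contactomorphic to $L_{f,p}=Z_f\cap S(p,\epsilon)$ is exactly the kind of statement \Cref{dist} is designed to give: both are contact links described by (possibly different) strongly plurisubharmonic distance functions on transverse nearby smoothings, and one connects them by a Gray-stability argument. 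Alternatively, this identification is standard in the literature (the Milnor fiber is a Stein filling of the link), and one may simply cite it. The homotopy type of $F$ is $\vee_k S^n$ by Milnor, $k\ge 1$ since the singularity is non-smooth, so $F$ already \emph{is} the desired filling and the proposition follows.

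If one prefers to build the filling by hand rather than quote the Stein structure on the Milnor fiber, the alternative route is: the complement $A:=\overline{B(p,\epsilon)}\setminus (\text{open tube around } Z_f)$ is a Liouville cobordism from the unit-normal circle bundle boundary near $L_{f,p}$ up to $S^{2n-1}=\partial B(p,\epsilon)$, and by the Milnor fibration $A$ fibers over $\overline{\mathbb D}$ with fiber $F$, hence is homotopy equivalent to $F\simeq\vee_k S^n$; one then caps off the other boundary component appropriately, using \Cref{prop:glue} to glue along the contactomorphisms supplied by \Cref{dist}, and tracks the homotopy type through the gluing (which, being a mapping-cylinder-type identification, does not change the homotopy type). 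Either way the output is a Liouville filling with the homotopy type of $\vee_k S^n$, $k\ge 1$.

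The main obstacle I anticipate is the precise identification of contact structures at the boundary: one must be sure that the contact structure on $\partial F$ coming from the ambient strongly pseudoconvex ball $S(p,\epsilon)\cap Z_f$ agrees, up to contactomorphism, with the one arising in whatever Liouville/Weinstein model one writes down for $F$, and that the homotopy type is genuinely $\vee_k S^n$ and not merely homology-equivalent. The first point is handled by \Cref{dist} (Gray stability along a path of plurisubharmonic distance functions on a fixed smoothing, plus the Curve Selection / non-opposite-gradients argument already recalled there); the second is Milnor's theorem on the topology of the Milnor fiber. With those two inputs in hand the argument is essentially a matter of assembling the standard local picture, so I expect the proof to be short once the correct citations to Milnor's fibration theorem and \Cref{dist} are marshalled.
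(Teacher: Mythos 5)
Your proposal is correct and takes essentially the same route as the paper: realize the Milnor fiber $f^{-1}(b)\cap B(p,\epsilon)$ as a Liouville filling via the sublevel-set construction of Example \ref{level}, identify its contact boundary with $L_{f,p}$ by a Gray-stability argument, and conclude with Milnor's bouquet theorem, the non-smoothness of $p$ giving $k\ge 1$. The only small differences are that the paper first replaces $f$ by a polynomial germ (using a biholomorphism together with Lemma \ref{dist} to preserve the contact link) so that the cited fibration and Milnor's theorems apply verbatim, and it carries out your ``standard identification'' of the fiber's boundary with $L_{f,p}$ via the fibration $f^{-1}(B(0,\delta))\cap S(p,\epsilon)\to B(0,\delta)$ plus Gray stability, rather than via Lemma \ref{dist}, which only compares different plurisubharmonic exhaustions on the \emph{fixed} variety $Z_f$ and does not by itself relate $Z_f\cap S(p,\epsilon)$ to the nearby smoothing $f^{-1}(b)\cap S(p,\epsilon)$.
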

\begin{proof}[Proof of Proposition \ref{prop:Milnor}]
By \cite[Proposition 6.19]{dim},  there is a germ of biholomorphisms $g$ of $(\mathbb C^{n},p)$ such that $f\circ g$ is the germ of a polynomial.  Define germs of real analytic functions $\rho_0:=|z-p|^2$ and $\rho_1:=\rho_0\circ g$. Then the contact link $L_{f,p}=Z_f\cap S(p,\epsilon)$ is contactomorphic to the contact manifold $g^{-1}(L_{f,p})=Z_{f\circ g}\cap \rho_1^{-1}(\epsilon^2)$  for $0<\epsilon\ll1$. By Lemma \ref{dist}, we further conclude that $g^{-1}(L_{f,p})$ is contactomorphic to the contact link $L_{f\circ g,p}=Z_{f\circ g}\cap S(p,\epsilon)$ for $0<\epsilon\ll1$.  Hence, we may assume that $f$ is a polynomial in the following. 

Fix $0<\epsilon \ll 1$. By  \cite[Theorem 2.8]{looi}, $f^{-1}(B(0,\delta))\cap S(p,\epsilon)$ is a smooth fiber bundle over $B(0,\delta)$ for $0<\delta \ll \epsilon $, where $B(0,\delta):=\{z\in\mathbb C:|z|<\delta\}$, and, moreover, each fiber is a contact manifold. Since 
$L_{f,p}$ is contactomorphic
to $f^{-1}(b)\cap S(p,\epsilon)$ by the Gray stability theorem,  by Example \ref{level} $L_{f,p}$ has a Liouville filling  that is homotopic to the Milnor fiber 
$$\mathcal X_{f,p}:
=f^{-1}(b)\cap \{z\in\mathbb C^n:|z-p|^2<\epsilon^2\},$$
where $b\in B(0,\delta)\setminus\{0\}$.
By \cite[Theorems 6.5 and 7.1]{mil2}, $\mathcal X_{f,p}$ has the homotopy type of a bouquet $\vee_k S^n$ with $k\geq 1$, for $p$ is non-smooth. \end{proof}

The key input from contact topology in this work is the following rigidity result on Liouville fillings for the link of a smooth point, i.e.\ the standard contact sphere. 
\begin{theorem}[Eliashberg-Floer-McDuff]\label{efm}
Any Liouville filling of $(S^{2n-1},\xi_{std})$ is diffeomorphic to the complex unit ball $\mathbb B^{n}$ for $n\ge 2$.
\end{theorem}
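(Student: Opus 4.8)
The plan is to reproduce the classical argument of Gromov, Eliashberg and McDuff, which reduces the statement to the rigidity of $\mathbb{CP}^n$ detected by pseudoholomorphic lines. Let $(W,\lambda)$ be a Liouville filling of $(S^{2n-1},\xi_{std})$. The first step is to \emph{cap it off}. A closed tubular neighbourhood of a complex hyperplane $P:=\mathbb{CP}^{n-1}$ inside $(\mathbb{CP}^n,\omega_{FS})$ is, after rescaling, symplectomorphic to the complement $V$ of a small Darboux ball in $\mathbb{CP}^n$; its boundary carries the standard contact structure $\xi_{std}$, but with the \emph{concave} co-orientation. Using the contact neighbourhood theorem in the symplectization (the same gluing mechanism as in Lemma~\ref{prop:glue}, now matching a convex end of $W$ to the concave end of $V$), one glues $W$ to $V$ along their common contact boundary and obtains a closed symplectic manifold $(X,\omega)$ of real dimension $2n$, in which $\omega$ agrees with $\omega_{FS}$ on a neighbourhood $N$ of the symplectic submanifold $P\subset V\subset X$. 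By construction $W=X\setminus\mathring V$, and it suffices to understand $X$.

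The heart of the proof is to show that $X$ is diffeomorphic to $\mathbb{CP}^n$ by a diffeomorphism carrying $P$ to a hyperplane and $N$ to a standard neighbourhood of it. Fix an $\omega$-tame almost complex structure $J$ on $X$ that equals the integrable structure on $N$ and is generic away from $N$, and let $L\in H_2(X;\mathbb{Z})$ be the class of a complex line contained in $N$. Then $L$ is primitive, has minimal positive $\omega$-area, and satisfies $L\cdot[P]=1$; since $P$ is a $J$-holomorphic submanifold, positivity of intersection forces every $J$-holomorphic curve in class $L$ to meet $P$ transversally in a single point. Combining this with Gromov compactness, one shows that the moduli space of simple $J$-holomorphic spheres in class $L$ has no bubbling — a broken configuration would split $L$ into classes $A_i$ of positive $\omega$-area represented by $J$-curves with $\sum A_i\cdot[P]=1$, which is impossible by minimality of the area of $L$ — and that for generic $J$ it is a smooth compact manifold of the expected dimension. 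The $J$-lines then foliate $X$; tracing through the structure of this foliation (the space of lines through a fixed point being naturally identified with $P\cong\mathbb{CP}^{n-1}$, each line being $\mathbb{CP}^1$, and the behaviour over $N$ pinning down the normal data) identifies $X$ diffeomorphically with $\mathbb{CP}^n$, compatibly with $P$ and $N$. For $n=2$ this is Gromov's original argument, which in fact yields a symplectomorphism $X\cong\mathbb{CP}^2$; for $n\ge 3$ it is McDuff's higher-dimensional version.

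Granting this, the theorem follows. The normal bundle of $P\cong\mathbb{CP}^{n-1}$ in $X\cong\mathbb{CP}^n$ is the standard $\mathcal O(1)$, so by the tubular neighbourhood theorem $V$ is carried to a standard neighbourhood of a hyperplane, whence
\[
W\;=\;X\setminus\mathring V\;\cong\;\mathbb{CP}^n\setminus\nu(\mathbb{CP}^{n-1})\;\cong\;\overline{\BB^{\,n}},
\]
i.e.\ $W$ is diffeomorphic to the complex unit ball. Alternatively, for $n\ge 3$ one may finish softly: Mayer--Vietoris and van Kampen applied to $X=W\cup_{S^{2n-1}}V$ with $X\cong\mathbb{CP}^n$ and $V\simeq\mathbb{CP}^{n-1}$ show that $W$ has the integral homology and the fundamental group of a point, so deleting a small Darboux ball from $W$ produces a simply connected $h$-cobordism between two standard $(2n-1)$-spheres, and the $h$-cobordism theorem gives $W\cong\BB^n$; the case $n=2$ is then separately covered by McDuff's classification of symplectic $4$-manifolds containing a symplectic $+1$-sphere.

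The step I expect to be the main obstacle is the pseudoholomorphic curve analysis in the second paragraph: establishing Gromov compactness together with the absence of bubbling for lines in $X$, proving transversality of the moduli space for a $J$ that is constrained to be standard near $P$, and deducing that the resulting foliation of $X$ by $J$-lines makes $X$ diffeomorphic to $\mathbb{CP}^n$. This is precisely the deep input due to Gromov, Eliashberg and McDuff; the capping construction and the final topological extraction are formal by comparison.
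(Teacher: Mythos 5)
You should first note that the paper itself does not prove Theorem~\ref{efm}: it is invoked as a black box, with the $n=2$ case attributed to Gromov, the general case to Eliashberg--Floer--McDuff, and an alternative Floer-cohomological proof attributed to the fourth author. So your write-up is not competing with an internal argument; it is a reconstruction of the external proof, and judged as such its overall skeleton is the right one: cap $W$ by the complement $V$ of a Darboux ball in $\mathbb{CP}^n$ (a concave collar of the hyperplane $P$), getting a closed symplectic $X$ with $\omega$ standard near $P$; study $J$-spheres in the line class $L$ with $L\cdot[P]=1$; exclude bubbling; extract the topology of $W=X\setminus\mathring V$; finish with the $h$-cobordism theorem. (Minor caveat: $V$ is not exact, so this is not literally Lemma~\ref{prop:glue}, but only the Liouville collars near the matching ends are used, so the same gluing goes through after scaling $\lambda$.)

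The genuine gap is exactly at the step you defer: ``the $J$-lines then foliate $X$ \dots identifies $X$ diffeomorphically with $\mathbb{CP}^n$, compatibly with $P$ and $N$.'' As stated this is both imprecise and strictly stronger than what the Eliashberg--Floer--McDuff argument establishes. Lines do not foliate $X$ (in the model only the lines through a fixed point foliate the complement of that point), and for $2n\geq 6$ the uniqueness of a $J$-line through two prescribed points is not a consequence of positivity of intersections, so ``tracing through the foliation'' does not produce the asserted identification. Moreover the compatible diffeomorphism $X\cong\mathbb{CP}^n$ is not needed and is not what the proof yields: a posteriori $X$ is $\nu(P)$ glued to a ball along some diffeomorphism of $S^{2n-1}$, i.e.\ a priori only $\mathbb{CP}^n$ up to connected sum with a homotopy sphere, so your intermediate claim is essentially as strong as (indeed stronger than) the theorem. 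The actual argument runs in the opposite direction: after compactness one considers the evaluation map from the moduli space of lines with a marked point (constrained, e.g., to pass through a point of $W$), shows it is a proper degree-one map, and uses it to prove \emph{directly} that $W$ is simply connected with the integral homology of a point; only then does the $h$-cobordism theorem give $W\cong\BB^n$ for $n\geq3$, with $n=2$ handled by Gromov--McDuff. Your ``soft finish'' is this argument, except that you feed it the unproved identification $X\cong\mathbb{CP}^n$ instead of the evaluation-map input. Relatedly, in the compactness step ``minimality of the area of $L$'' is not the operative point: one uses that $\omega$ is exact on $X\setminus P$ (primitives on $W$ and on $V\setminus P$ match across the collar since $H^1(S^{2n-1})=0$), hence $[\omega]=\omega(L)\,\mathrm{PD}[P]$, so every nonconstant limit component not contained in $P$ meets $P$ positively, forcing a single such component; components contained in $P$ must then be excluded separately via the marked-point constraint in $W$. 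With these corrections your outline matches the cited proof; as written, the crux is asserted rather than proved.
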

The above theorem   was obtained by Gromov \cite[0.3.C.]{Gromov} when $n=2$. The general case was due to Eliashberg-Floer-McDuff \cite[Theorem 1.5]{McDuff}. The proofs in these papers were based on the study of pseudo-holomorphic spheres. An alternative proof, based on the Floer cohomology, can be found in \cite[Theorem 1.2]{zhou} by the fourth author.

\begin{proof}[Proof of Theorem \ref{nosing}]
We first prove that $W$ is smooth. Otherwise, by the assumption, $W$ has finitely many isolated hypersurface singularities $p_1,p_2,\cdots,p_l$ with $l\geq1$.

In what follows, we first construct a special smooth strongly plurisubharmonic Morse defining function $\rho$ of $\overline W$. 
For each $1\leq i\leq l$, take an embedding $\iota_i$ of the germ of the isolated hypersurface singularity $(U_i,p_i)$ into $\mathbb C^{n+1}$ with coordinates $z=(z_1,\cdots,z_{n+1})$. Denote by $h_i$ the pullback under $\iota_i$ of the function $(|z|^2+\log |z|^2)$ defined on $\mathbb C^{n+1}$. Applying the Curve Selection Lemma, after shrinking $U_i$ if needed, we can assume that $h_i$ has no critical points in $U_i\setminus\{p_i\}$. By cutting off $h_i$ around $p_i$ and extending it by zero outside, we can derive a function $\eta$ such that $\eta=h_i$ near $p_i$. Embedding $W$ properly into $\mathbb C^N$ and denoting by $\tilde\eta$ the pullback of the function $|Z|^2$ with $Z\in \CC^N$, we can thus define a strongly plurisubharmonic exhaustion function on $W$ by
$\rho_1:=\exp(\eta+k\cdot\tilde\eta)$ for $k\gg1$. On the other hand, there is a plurisubharmonic smooth defining function $\rho_2$ of $\overline W$ such that \begin{enumerate}[label=(\roman*)]
\item $d\rho_2|_{\partial W}\neq 0$, $\rho_2\equiv 0$ on $\partial W$, and $\rho_2<0$ on $W$, 

\item $\rho_2$ is strongly plurisubharmonic in a certain neighborhood $U$ of $\partial W$,

\item $\rho_2$  takes minimum on a subset containing $\bigcup_{1\leq i\leq l} U_i$.
\end{enumerate}
Now replacing $\rho_1$ by $\rho_1-\mathcal N$ for a sufficiently large $\mathcal N$,  cutting off $\rho_1$ near the boundary, adding $k\cdot\rho_2$ to it with $k\gg1$, and then applying the Morse approximation, we  get the desired  $\rho$. Note that $\rho$ assumes its minimum exactly at  $p_1,p_2,\cdots,p_l$, and, moreover, $\rho$ has only finitely many non-degenerate critical points in $W\setminus\{p_1,p_2,\cdots,p_l\}$.

Without loss of generality, we assume that the $\rho(p_i)=-1$. Take $0<\epsilon\ll1$ such that $(-1+\epsilon)$ is a regular value of $\rho$, and $\rho^{-1}([-1,-1+\epsilon))$ is a disjoint union of germs of isolated hypersurface singularities with contact links $L_{f_1,p_1}$, $L_{f_2,p_2},\cdots,L_{f_l,p_l}$.  It is known that the Morse index of each critical point in $\rho^{-1}([-1+\epsilon,0])$ is at most $n$. Then the standard Morse theory 
yields that $\rho^{-1}([-1+\epsilon,0])$ has the homotopy type of $\rho^{-1}(-1+\epsilon)$ with cells of real dimension $\leq n$ attached.

\begin{figure}[htb] {\small
\begin{center}
\begin{overpic}[scale=0.4]
{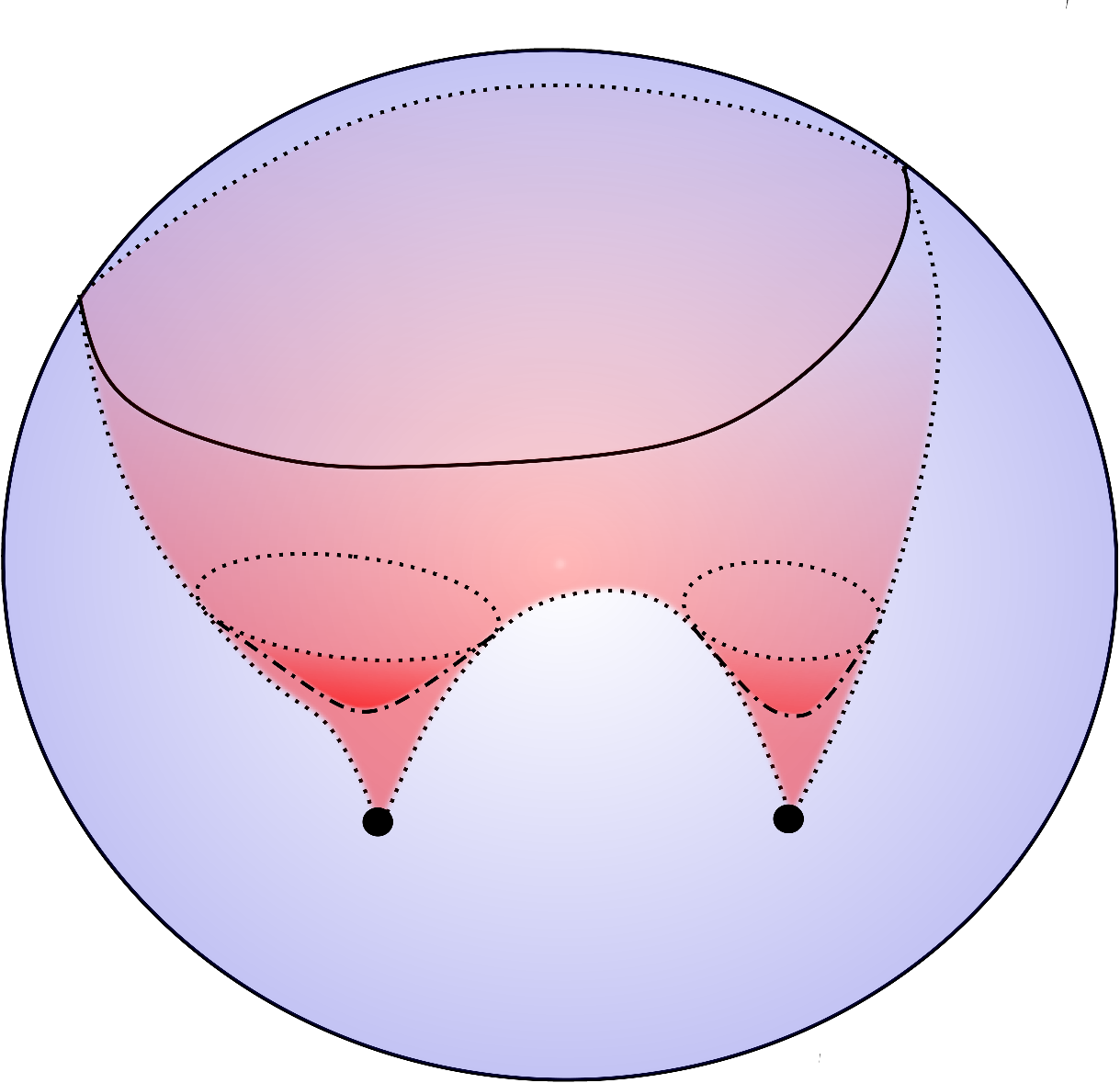}
\put (80,48) {\tiny $p_1$}
\put (165,48) {\tiny $p_2$}
\put (100,200) {\tiny $(S^{2n-1},\xi_{std})$}
\put (120,120) {\tiny $W$}
\put (160,100) {\tiny $L_{f_2,p_2}$}
\put (60,100) {\tiny $L_{f_1,p_1}$}
\put (70,82) {\tiny $\mathcal{X}_{f_1,p_1}$}
\put (160,82) {\tiny $\mathcal{X}_{f_2,p_2}$}
\end{overpic}
\end{center}}
\caption{Schematic picture for replacing the neighborhood by Milnor fibers}
\label{fig:Milnor}
\end{figure}

Now $\rho^{-1}([-1+\epsilon, 0])$ is a Liouville cobordism from the disjoint union of the contact links $L_{f_i,p_i}$ to the standard contact sphere $(S^{2n-1},\xi_{std})$ by assumption
(see Example \ref{level}). On the other hand, we  derive a Liouville filling $V$ of $(S^{2n-1},\xi_{std})$ by gluing the Milnor fibers $\mathcal X_{f_i,p_i}$ of $p_i$ to the Liouville cobordism  $\rho^{-1}([-1+\epsilon,0])$ according to Proposition \ref{prop:Milnor} and Lemma \ref{prop:glue}. Since each ${\mathcal X}_{f_i,p_i}$ has the homotopy type of a bouquet $\vee_{k_i} S^n$ with $k_i\geq 1$, we can thus conclude that $H_n(V)\neq 0$, which contradicts \Cref{efm}.

Therefore $W$ is smooth. By \Cref{efm} again, $W$ is diffeomorphic to ${\BB}^{n}$ as  it is a Liouville filling of $(S^{2n-1},\xi_{std})$. \end{proof}
\begin{remark} \label{rem-002}
Since any $C^1$-smooth deformation connecting two smooth contact manifolds can be made into a smooth deformation, for the space of contact structures is open in the $C^1$-topology, the above proof applies to the case when $\partial W$ is $C^k$-smooth, $k\geq2$, as a slight push-in of $\partial W$, which is smooth and strongly pseudoconvex, is smoothly contactomorphic to $(S^{2n-1},\xi_{std})$.
\end{remark}

Let $(X, p)$ be a germ of isolated singularities with $\dim_{\mathbb C}X=n$. Recall that $(X, p)$ is said to be an isolated complete intersection singularity if it is a complete intersection, that is, it can be embedded in $(\mathbb C^{n+k}, p)$ as
the zero locus of an ideal $I\subset\mathcal O_{\mathbb C^{n+k},p}$ generated by $k$ functions $f_1,\cdots, f_{k}$. In particular, an isolated hypersurface singularity is an isolated complete intersection singularity.

We mention that  the same method as above also gives the following slightly stronger result than our Theorem \ref{nosing}:
\begin{theorem}\label{icis}
Let $W$ be a Stein space of dimension $n\geq 2$ with at most finitely many isolated complete intersection singularities. Assume further that $W$ has a compact smooth strongly pseudoconvex boundary  $\partial W$,  of which the CR structure is contactomorphic to   $(S^{2n-1}, \xi_{std})$. Then $W$ is a smooth Stein manifold and is diffeomorphic to the complex unit ball $\BB^n$.  
\end{theorem}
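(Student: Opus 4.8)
The plan is to reduce \Cref{icis} to the proof scheme of \Cref{nosing}, replacing Milnor's theory of isolated hypersurface singularities by Hamm's theory for complete intersections; the contact-topological and Morse-theoretic portions of the argument require no change. I would argue by contradiction: assuming $W$ is singular, it carries finitely many isolated complete intersection singularities $p_1,\dots,p_l$ with $l\geq 1$. First I would construct a smooth strongly plurisubharmonic Morse defining function $\rho$ of $\overline W$ exactly as in the proof of \Cref{nosing}, the only change being that near $p_i$ one embeds the germ $(U_i,p_i)$ into a higher-dimensional affine space $\CC^{n+c_i}$ (rather than into $\CC^{n+1}$), pulls back $|z|^2+\log|z|^2$, removes its critical points on the punctured germ by the Curve Selection Lemma, and then patches with a global strongly plurisubharmonic defining function and Morse-approximates. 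Normalizing $\rho(p_i)=-1$ and picking $0<\epsilon\ll 1$ with $-1+\epsilon$ a regular value, $\rho^{-1}([-1,-1+\epsilon))$ becomes a disjoint union of germs of isolated complete intersection singularities with contact links $L_{f_1,p_1},\dots,L_{f_l,p_l}$; and since every critical point of $\rho$ in $\rho^{-1}([-1+\epsilon,0])$ has Morse index $\leq n$ (there $W$ is a smooth complex $n$-manifold and $\rho$ is strongly plurisubharmonic), $\rho^{-1}([-1+\epsilon,0])$ is, up to homotopy, obtained from $\rho^{-1}(-1+\epsilon)$ by attaching cells of dimension $\leq n$.

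The one genuinely new step is the complete-intersection analogue of \Cref{prop:Milnor}: that the contact link $L_{f,p}:=X\cap S(p,\epsilon)$ of a non-smooth isolated complete intersection singularity $(X,p)$ --- which for $0<\epsilon\ll 1$ is a compact, strongly pseudoconvex, real-analytic hypersurface in $X$, hence a contact manifold for the contact structure induced from its CR structure --- admits a Liouville filling of the homotopy type of a bouquet $\bigvee_{\mu}S^n$ with $\mu\geq 1$. To prove it I would first observe that \Cref{dist} generalizes verbatim, since its Curve Selection Lemma together with the Gray stability argument never uses that $X$ is a hypersurface; thus the contact structure of $L_{f,p}$ is independent of the chosen real-analytic strongly plurisubharmonic distance function, and one may work directly with the holomorphic germ, as Hamm's results below are local. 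Next, by Hamm's Milnor fibration theorem for isolated complete intersection singularities (see \cite{looi}), writing $f$ for a defining map of $(X,p)$ into $\CC^c$, for $0<\delta\ll\epsilon$ the restriction of $f$ to $f^{-1}(B(0,\delta))\cap S(p,\epsilon)$ (with $B(0,\delta)\subset\CC^c$) is a smooth fibre bundle over the complement of the discriminant whose fibres are strongly pseudoconvex, and for a regular value $b$ the Milnor fibre $\mathcal X_{f,p}:=f^{-1}(b)\cap\{|z-p|<\epsilon\}$ is, via \Cref{level}, a Liouville filling of a contact manifold contactomorphic (by Gray stability) to $L_{f,p}$. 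Finally, by Hamm's theorem on the topology of Milnor fibres of complete intersections (\cite{looi}), $\mathcal X_{f,p}$ is $(n-1)$-connected, hence has the homotopy type of $\bigvee_{\mu}S^n$, and $\mu\geq 1$ because the Milnor number of an isolated complete intersection singularity vanishes exactly when the singularity is smooth.

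Granting this, the endgame is identical to that of \Cref{nosing}: $\rho^{-1}([-1+\epsilon,0])$ is a Liouville cobordism from $\bigsqcup_i L_{f_i,p_i}$ to $(S^{2n-1},\xi_{std})$ by \Cref{level}, so gluing the Milnor fibres $\mathcal X_{f_i,p_i}$ onto it along the $L_{f_i,p_i}$ via \Cref{prop:glue} produces a Liouville filling $V$ of $(S^{2n-1},\xi_{std})$ which, up to homotopy, is obtained from $\bigsqcup_i\mathcal X_{f_i,p_i}$ by attaching cells of dimension $\leq n$; since attaching such cells keeps $H_n$ injective, $\bigoplus_i\ZZ^{\mu_i}=H_n(\bigsqcup_i\mathcal X_{f_i,p_i})$ injects into $H_n(V)$, so $H_n(V)\neq 0$, contradicting \Cref{efm}. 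Hence $W$ is smooth, and \Cref{efm} applied once more identifies $W$, up to diffeomorphism, with $\BB^n$; as in \Cref{rem-002}, only $C^2$-smoothness of $\partial W$ is needed. I expect the main obstacle to lie in the second paragraph: one must make sure that the entire Milnor-fibration package --- strong pseudoconvexity of the link, the local fibration, $(n-1)$-connectivity of the fibre, and positivity of the Milnor number in the singular case --- really is available for complete intersections, which is precisely the content of Hamm's work; by contrast, the Liouville-cobordism gluing and the Morse-theoretic homology count transfer mechanically from the proof of \Cref{nosing}.
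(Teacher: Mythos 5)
Your proposal is correct and follows essentially the same route as the paper: run the argument of Theorem \ref{nosing} verbatim, replacing Proposition \ref{prop:Milnor} by the ICIS Milnor fibration and bouquet-of-$n$-spheres theorem (cited from Looijenga's book), and glue the Milnor fibres onto the Liouville cobordism to contradict Theorem \ref{efm}. The only cosmetic difference is the justification of $\mu\geq 1$: the paper deduces it from the inequality $\mu\geq\tau$ of Looijenga--Steenbrink together with the evident positivity of the Tjurina number at a non-smooth point, whereas you invoke the equivalent standard fact that an isolated complete intersection singularity with vanishing Milnor number is smooth.
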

\begin{proof}[Proof of Theorem \ref{icis}] Let $p$ be a (non-smooth) isolated complete intersection singularity of $W$. Then the germ $(X,p)$ of $W$ at $p$ can be defined by a germ of a holomorphic map $f:=(f_1,\cdots,f_{k}):(\mathbb C^{n+k},p)\rightarrow (\mathbb C^{k},0)$ such that $X=f^{-1}(0)$ and $\frac{\partial f_i}{\partial z_j}(p)=0$ for any $1\leq i\leq k$, $1\leq j\leq n+k$, where $(z_1,\cdots,z_{n+k})$ are coordinates of $\mathbb C^{n+k}$. By \cite{LOS}, the Milnor number $\mu(X,p)$ of $(X,p)$ is no less than its Tjurina number $\tau(X,p)$, which, as in \cite{GLS}, is given by
\begin{equation*}
\tau(X,p):=\dim\frac{\mathcal O^k_{\mathbb C^{n+k},p}}{\mathcal O^{n+k}_{\mathbb C^{n+k},p}\cdot (\frac{\partial f_j}{\partial z_i})_{\substack{1\leq i\leq n+k\\1\leq j\leq k}}+\mathcal O^k_{\mathbb C^{n+k},p}\cdot f_1+\mathcal O^k_{\mathbb C^{n+k},p}\cdot f_2+\cdots+\mathcal O^k_{\mathbb C^{n+k},p}\cdot f_{k}}.
\end{equation*}
It is then clear that $\mu(X,p)\geq 1$.

As in (\ref{link}), we define the Milnor (contact) link of $(X, p)$ to be $L_{f,p}:=f^{-1}(0)\cap S(p,\epsilon)$, where $0<\epsilon\ll1$. 
Write $\rho=|z-p|^2$. By \cite[Theorem 2.8]{looi}, there are a number $0<\delta \ll \epsilon $ and a complete analytic subvariety $D\subsetneq B(0,\delta):=\{z\in\mathbb C^k:|z|<\delta\}$ such that $f^{-1}(B(0,\delta)\setminus D)\cap\rho^{-1}([0,\epsilon^2])$ and $f^{-1}(B(0,\delta)\setminus D)\cap \rho^{-1}(\epsilon^2)$ are smooth fiber bundles over $B(0,\delta)\setminus D$, and that each fiber $f^{-1}(b)\cap \rho^{-1}(\epsilon^2)$ with $b\in B(0,\delta)\setminus D$ is a contact manifold. By \cite[Corollary 5.9]{looi}, each fiber $f^{-1}(b)\cap\rho^{-1}([0,\epsilon^2])$ has
the homotopy type of a finite bouquet of $n$-spheres, where the number of $n$-spheres is the Milnor number $\mu(X,p)\geq1$  of $(X,p)$.

By the same argument as in Theorem  \ref{nosing}, we now complete the proof of Theorem \ref{icis}. \end{proof}

In view of Theorems \ref{nosing}, \ref{icis}, we make the following conjecture. 
\begin{conjecture}\label{conj}
Let $W$ be a Stein space of dimension $n\geq 3$ with at most finitely many isolated normal singularities. Assume further that $W$ has a compact strongly pseudoconvex smooth boundary $\partial W$, of which the CR structure is contactomorphic to  $(S^{2n-1}, \xi_{std})$. Then $W$ is a smooth Stein manifold and is diffeomorphic to the complex unit ball $\BB^n$.
\end{conjecture}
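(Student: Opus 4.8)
\noindent\emph{Towards a proof of Conjecture \ref{conj}.}
The plan is to push the strategy behind Theorems \ref{nosing} and \ref{icis} as far as it will go. Exactly as there, the statement reduces to the claim that \emph{the contact link of a non-smooth isolated normal singularity of complex dimension $n\ge 3$ is never contactomorphic to $(S^{2n-1},\xi_{std})$}: granting this, one first builds a smooth strongly plurisubharmonic Morse exhaustion $\rho$ of $\ov W$ that attains its minimum exactly along the singular locus $\{p_1,\dots,p_l\}$ and that near each $p_i$ agrees with the pullback of $|z|^2+\log|z|^2$ under some local holomorphic embedding $(X,p_i)\hookrightarrow\CC^{N_i}$, so that, by the Curve Selection Lemma, $\rho$ has no critical point near $p_i$ other than $p_i$; cutting $\rho$ at a small regular value just above its minimum then exhibits the part of $W$ lying between the singularities and $\p W$ as a Liouville cobordism from $\sqcup_i L_i$ to $(\p W,\xi)\cong(S^{2n-1},\xi_{std})$, obtained from $(\sqcup_i L_i)\times[0,1]$ by attaching handles of index $\le n$. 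If for each non-smooth $p_i$ one can produce a Liouville filling $V_i$ of $L_i$ with $H_n(V_i)\ne 0$, then Lemma \ref{prop:glue} glues the $V_i$ onto this cobordism to give a Liouville filling $V$ of $(S^{2n-1},\xi_{std})$; since a handle of index $\le n$ induces an injection on $H_n$, we get $H_n(V)\ne 0$, contradicting Theorem \ref{efm}. So the entire problem is the production of such fillings.

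When $(X,p_i)$ is \emph{smoothable} this is essentially Theorem \ref{icis} again: the generalized Milnor fibre of a smoothing is a Stein domain whose contact boundary is $L_i$ --- by the Gray stability theorem and Example \ref{level} --- hence a Liouville filling of $L_i$; it is a bouquet $\vee_k S^n$ with $k\ge1$ in the complete intersection case, which gives $H_n\ne0$ at once, and for a general smoothable normal singularity one argues in the same way using that the Milnor fibre is topologically non-trivial (most robustly, that its positive symplectic homology does not vanish, while that of $\BB^n$ does), so that Lemma \ref{prop:glue} again contradicts Theorem \ref{efm}. Thus the conjecture holds for all normal singularities that occur as limits of smooth ones.

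The real obstacle is the general, possibly \emph{non-smoothable}, isolated normal singularity: there is then no Milnor fibre to insert, and a resolution $\wt X\to X$ does not help, because $\wt X$ near its exceptional divisor is not a Liouville filling of $L_i$ (it is not Stein), so it cannot be fed into the gluing; indeed one cannot even take the existence of \emph{any} Liouville filling of $L_i$ for granted. My proposed route is therefore to abandon fillings and to read off directly from the contact structure on $L_i$ --- e.g.\ from its contact homology / minimal SFT --- an invariant that separates singular links from $(S^{2n-1},\xi_{std})$. When $(X,p_i)$ is $\QQ$-Gorenstein this is available: by a theorem of McLean the minimal log discrepancy of $(X,p_i)$ is determined by the contact structure on $L_i$, and a normal $\QQ$-Gorenstein singularity has minimal log discrepancy equal to its dimension if and only if the point is smooth; since $(S^{2n-1},\xi_{std})$ is the link of a smooth point, a link contactomorphic to it must have minimal log discrepancy $n$ and hence come from a smooth point. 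Two things remain. The first, which should be routine, is to check that the contact structure on $L_i$ occurring here --- inherited from the CR structure of $\p W$ and carried through the cobordism --- is the one entering McLean's theorem. The second, which I expect to be the genuine difficulty and the reason the statement is only a conjecture, is to treat non-$\QQ$-Gorenstein normal singularities, for which the minimal log discrepancy is not even defined; this seems to require a new contact invariant of $L_i$ that still encodes enough of the birational geometry of $(X,p_i)$ to detect a non-smooth point.
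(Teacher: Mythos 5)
This statement is one the paper deliberately leaves open --- it is stated as Conjecture \ref{conj}, with only partial cases recorded ($n=2$ via Gromov--Eliashberg--McDuff, quotient singularities via the cited work of Gironella et al.) --- so there is no proof in the paper to match, and your sketch does not close it; moreover its central ``reduction'' is where it breaks. You assert at the outset that the conjecture reduces to the local statement that the link of a non-smooth isolated normal singularity is never contactomorphic to $(S^{2n-1},\xi_{std})$ (Seidel's conjecture). That is not a reduction: the hypothesis of Conjecture \ref{conj} concerns $\partial W$, not the links $L_i$ sitting deep inside $W$. The mechanism of Theorems \ref{nosing} and \ref{icis} needs, for each non-smooth $p_i$, an actual Liouville filling of $L_i$ with nonvanishing $H_n$ (or some invariant that transfers through the Liouville cobordism to the filling of $\partial W$), so that Lemma \ref{prop:glue} and Theorem \ref{efm} give a contradiction; mere non-standardness of $L_i$ produces neither a filling nor a contradiction, since $L_i$ need not admit any Liouville filling at all. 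This is exactly the obstruction discussed after the conjecture in \S 4: McLean's correspondence between Reeb dynamics and minimal log discrepancy is a statement about a small neighborhood of one singularity, and because of possibly nontrivial symplectic topology outside the germs it is not even clear that Shokurov's conjecture --- hence the local Seidel statement your $\QQ$-Gorenstein argument aims at --- implies Conjecture \ref{conj}. So your route for the non-smoothable case is aimed at the wrong boundary.

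Two further gaps. First, in the smoothable case you claim the Milnor fibre of a smoothing of any non-smooth normal singularity is topologically nontrivial, ``most robustly'' that its positive symplectic homology is nonzero; outside the complete intersection case already covered by Theorem \ref{icis} there is no bouquet theorem, rational homology disk smoothings do occur in dimension two (Wahl), and neither the homological nor the Floer-theoretic nonvanishing is a known theorem for general smoothable normal singularities with $n\ge 3$, so this step needs a genuine argument rather than a parenthetical. Second, your $\QQ$-Gorenstein step invokes ``minimal log discrepancy equals $n$ if and only if the point is smooth'' as a fact; that is precisely Shokurov's conjecture, open in general and known only in dimension $3$ and for locally complete intersection singularities, so even that branch is conditional --- and, by the previous paragraph, would at best yield the local statement, not the conjecture. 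What your proposal does establish correctly is a faithful re-run of the paper's proved results (the Morse-theoretic cobordism with handles of index at most $n$, injectivity on $H_n$, gluing via Lemma \ref{prop:glue}, contradiction with Theorem \ref{efm}); the genuinely new cases remain open for the same reasons the paper itself gives.
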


Characterizing the smoothness of isolated singularities from a local perspective has a long history. A fundamental theorem in this direction due to Mumford \cite{Mum} states that a non-smooth normal surface singularity can not have a simply connected link. In higher dimensions, the presence of a sphere link alone is insufficient to rule out isolated singularities as demonstrated by certain Brieskorn singularities. 
Motivated by these considerations, Seidel conjectured that an isolated normal singularity is smooth if and only if its contact link is the standard contact sphere. In \cite{McL},  McLean discovered the deep relation between the Reeb dynamics on the contact link and the minimal discrepancy of the singularity, an invariant of singularities coming up in the minimal model program. In particular, he proved Seidel's conjecture conditionally to a conjecture of Shokurov on minimal log discrepancies, which has been answered affirmatively in
dimension $3$ and for locally complete intersection singularities. 

Conjecture \ref{conj} is a global version of Seidel's conjecture. McLean's relation between the Reeb dynamics on the contact boundary and the minimal discrepancy of the singularities does not  hold 
for general Stein spaces other than a small neighborhood of one singularity, due to possible non-trivial symplectic topology outside germs of singularities. In particular, it is not even clear if Shokurov's conjecture would imply Conjecture \ref{conj}. When $n=2$, Conjecture \ref{conj} is derived readily from the works of Gromov \cite{Gromov}, Eliashberg \cite{Eli89} and McDuff \cite{McDuff90} as any symplectic filling of $(S^3,\xi_{std})$ must be blow-ups of the symplectic ball, hence the resolution of singularities in $W$ as a symplectic filling of $(S^3,\xi_{std})$ must be blow-ups of a ball. In fact, in this scenario it suffices to assume that $\partial W$ is diffeomorphic to $S^3$ by  \cite[Corollary 5.3]{Eli89}. Conjecture \ref{conj} has also been answered affirmatively in  \cite[Theorem C] {gironella2021exact} when the singularities are assumed to be quotient singularities. Conjecture \ref{conj} would be a  powerful tool in tackling many problems in several complex variables, if confirmed affirmatively.

\section {Proof of Theorem \ref{hull} and completion  of the proof of Theorem \ref{maintheorem-global}}

We now assume that  $M\subset {\mathcal M}$ is as in Theorem \ref{maintheorem-global}.  Let $p_1$ and $p_2$ be two elliptic CR singular points of $M$. Other points on $M$ are  CR non-minimal. Near $p_1$ or $p_2$, we may assume that $\mathcal M$ is an open subset of $\CC^{n+1}$.    For two leaves $M_1$ and $M_2$ sufficiently close to two elliptic CR singular points, respectively, by Theorem \ref{maintheorem-local},  $D(M_1)\cup D(M_2)\sm \{p_1,p_2\}$  is foliated by CR orbits of $M$ with its boundaries  $M_1$ and $M_2$ as two leaves. Here 
$D(M_1)$ and $D(M_2)$  are formed by joining  closed orbits from $p_1$ to $M_1$ and from $p_2$ to $M_2$ (but not including  $M_1$ and $M_2$), respectively. Write $\wh{D(M_1)}$ and $\wh{D(M_2)}$ for the unions of the families of Stein submanifolds with boundary enclosed by closed orbits in $D(M_1)\sm\{p_1\}$ and $D(M_2)\sm\{p_2\}$, respectively.
  
By the Reeb-Thurston stability theorem \cite{Th},    there is  a $C^{\ell(N_0)}$-smooth diffeomorphism  $G$ from $[1,2]\times S^{2n-1}$ to $M\sm \{D(M_1)\cup D(M_2)\}$ such that
$G(t,\cdot)$ maps the unit sphere $S^{2n-1}\times\{t\}$ in $\CC^n\times\{t\}$ to a leaf  $M_t$ in $M$. Moreover, $G(S^{2n-1}\times\{1\})=M_1$ and $G(S^{2n-1}\times\{2\})=M_2$. 
By a theorem of Harvey-Lawson \cite{HL}, each $M_t$ bounds a complex hypervarity $\wh{M_t}$ with possible isolated normal singularities. 
Let $\wh{M}=\{p_1,p_2\}\cup {\wh {D(M_1)}}\cup \ {\wh{D(M_2)}}\cup_{1\le t\le 2}\wh{M_t}$. Notice that by Theorem \ref{maintheorem-local}, the Reeb-Thurston stability theorem, and the Gray stability theorem, each $M_t$ is contactmorphic to the unit sphere in $\CC^n$ equipped with the standard contact structure.  Then, by Theorem \ref{maintheorem-local},  Theorem \ref{openness}  and  Theorem \ref{nosing} with Remark \ref{rem-002},  we conclude that  $\wh{M}\setminus\{p_1,p_2\}$ is a $C^{\ell(N_0)} $-smooth Levi-flat hypersurface foliated by smooth complex hypersurfaces that is $C^{\ell(N_0)}$-smooth up to $M$. The uniqueness of $\wh{M}$ follows from the fact that each compact CR orbit bounds a unique Stein submanifold or follows from Theorem  \ref{hull} if $N_0\ge 10$. 

To complete the proof of Theorem \ref{maintheorem-global}, it remains to prove Theorem \ref{hull}. By the continuity principle, it is apparent that any pseudoconvex domain in $\mathcal M$ containing $M$ contains $\wh{M}$. It suffices to construct a family of pseudoconvex domains shrinking down to $\wh{M}$. 

Our construction here    is partially motivated by the earlier work of Huang \cite{Hu1} for the construction  near a singularity and that of  Forstneri\v{c}-Laurent-Thi\'{e}baut \cite{FLT} of the construction of   good defining functions at  a regular point.

Let $\Omega\subset\mathcal M$ be a $C^2$-smoothly bounded strongly pseudoconvex domain of which the boundary contains $M$. Let $\rho$ be a strongly plurisubharmonic defining function of $\Omega$ defined on a certain neighborhood of $\ov\Omega$. 
Inspired by a beautiful idea of Forstneri\v{c}-Laurent-Thi\'{e}baut \cite{FLT}, we first construct a special definition function of $\widehat M$ as follows.
\begin{lemma}\label{regular}
There is an open neighborhood $\mathcal U$ of $\widehat M\cup M$ in $\mathcal M$ and a $C^3$-smooth function $\widehat v$ on $\mathcal U$ such that the following holds: \begin{enumerate}[label=(\roman*)]
    \item $\wh v(x)=0$ and $d\wh v(x)\neq 0$ for all $x\in\widetilde M$, where $\widetilde{M}:=\{x\in {\mathcal U}:\wh v(x)=0\}$ is a certain $C^3$-smooth real hypersurface in $\mathcal U$ such that $\widehat M\subset\joinrel\subset\widetilde {M}$.
    
    \item Denote by ${\rm dist}(x,\widehat M)$ the distance from $x$ to $\wh M$ with respect to a certain fixed metric on $\mathcal M$. Then as $x\in{\mathcal U}$ and ${\rm dist}(x,\widehat M)\rightarrow 0$,
$$|\partial{\ov\partial}\wh v(x)|=o({\rm dist}(x,\widehat M))=o(|\wh v(x)|+|\rho(x)|).$$  
\end{enumerate}
\end{lemma}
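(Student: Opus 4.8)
The plan is to construct $\wh v$ by piecing together three kinds of local defining functions: one near each elliptic singular point $p_1,p_2$ coming from the explicit asymptotic control obtained in Theorem \ref{maintheorem-local} (in particular the regularity estimates \eqref{aesti}, \eqref{HH} and Proposition \ref{mamodel} for $\wh{M^a_{r^*}}$), one near the smooth regular part $\wh M\setminus(D(M_1)\cup D(M_2))$ coming from the $t$-function on $\wh S$ produced in Theorem \ref{openness} (which is $C^{\ell(N_0)}$-smooth with non-vanishing differential and constant on each leaf), and a transition in the overlap region. Since $N_0\ge 10$ forces $\ell(N_0)\ge 3$, each of these candidates is at least $C^3$ up to $M$. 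First I would fix, near each $p_i$, the flattening coordinates $(z,w)$ in which $\wh M$ is the graph studied in \S 2, write $\wh v$ as (essentially) $\mathrm{Im}\,w$ minus the graph function $v=v(z,\bar z,u)$, and invoke the pointwise bounds $D^\a_{z}D^\b_u(F-\mathrm{Id})=o(u^{(N-1-|\a|-2|\b|)/2})$ and $D^\a_z D^\b_u\wt{w^\sharp}=o(u^{(N-1-|\a|-2|\b|)/2})$ to verify that $|\partial\bar\partial\wh v|=o(\mathrm{dist}(\cdot,\wh M))$ near $p_i$; the extra factor of "distance" is exactly the half-power gain one sees in these estimates, which is why $\wh M$ being $C^3$ (two derivatives, with the second vanishing linearly) up to the singular point is enough.

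Second, away from the two elliptic points I would take $\wh v$ to be the function $t$ of Theorem \ref{openness}, or rather a suitable $C^3$ modification of it: $t$ vanishes on the central leaf but not on $\wh M$, so I would instead use a defining function of the hypersurface $\wt M\supset\wh M$ that extends the Levi-flat foliation slightly past $M_1$ and $M_2$, normalized so that $\wt M=\{\wh v=0\}$; since $\wt M$ is Levi-flat (foliated by complex hypersurfaces) its Levi form vanishes on the complex tangent directions, and the only contributions to $\partial\bar\partial\wh v$ are transversal, which can be arranged to be $O(|\wh v|)=o(\mathrm{dist}(\cdot,\wh M))$ by choosing $\wh v$ to vanish to the right order in the transversal variable — this is the Forstneri\v c--Laurent--Thi\'ebaut normalization trick, applied leaf by leaf and then patched by a partition of unity subordinate to a finite cover of the compact set $\wh M\setminus(\text{small neighborhoods of }p_1,p_2)$. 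Third, in the overlap annuli near each $p_i$ I would glue the singular model of step one to the regular model of step two using a cutoff that depends only on the leaf parameter $t$ (equivalently on $u$), so that the cutoff is constant on each complex leaf and hence does not create complex-tangential second derivatives; the error terms introduced by the cutoff are then supported where both models already satisfy the $o(\mathrm{dist})$ bound, and a Leibniz expansion shows the glued $\wh v$ still does.

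The main obstacle I expect is the overlap/gluing step: near $p_i$ one must check that the two defining functions — the explicit graph-type one and the $t$-function one — not only define the same hypersurface $\wh M$ but agree to high enough order (after rescaling by a nonvanishing factor) that the difference, and all its derivatives up to order $3$, are $o(\mathrm{dist}(\cdot,\wh M))$; this requires matching the leaf-parameter $t$ of Theorem \ref{openness} with the radial parameter $r$ (or $u=r^2$) of Theorem \ref{maintheorem-local} in a $C^3$ fashion, which in turn uses the compatibility of the two constructions that is already implicit in the statement of Theorem \ref{maintheorem-global} (both give the same foliation with the same $t$). Once that identification is in hand, the estimate $|\partial\bar\partial\wh v|=o(\mathrm{dist})=o(|\wh v|+|\rho|)$ follows: away from $M$ it is trivial (just $C^3$-smoothness and Levi-flatness giving vanishing of the tangential Levi form, times an error $O(|\wh v|)$), and near $M$ one uses that $\rho$ is a strongly plurisubharmonic defining function of the strongly pseudoconvex $\Omega\supset\wh M$, so $|\wh v|+|\rho|$ is comparable to $\mathrm{dist}(\cdot,\wh M)$ there. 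The remaining routine points — that $\wt M$ can be taken with $\wh M\subset\subset\wt M$ and $d\wh v\ne0$ on $\wt M$ — come for free from the construction since $dt\ne0$ on $\wh S$ and the singular model has nonvanishing differential off $p_1,p_2$, while near $p_i$ one only needs $d\wh v\ne0$, which holds because $\mathrm{Im}\,w-v$ has nonvanishing differential.
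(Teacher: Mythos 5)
Your overall architecture (local pieces near $p_1,p_2$ from Theorem \ref{maintheorem-local}/Proposition \ref{mamodel}, pieces along the regular part from Theorem \ref{openness}, then patching) matches the paper, but the central mechanism that makes (ii) true is missing, and the step you rely on in its place would fail. On the regular part you argue that because $\wt M$ is Levi-flat, ``the only contributions to $\partial\ov\partial\wh v$ are transversal'' and can be made $O(|\wh v|)$ by normalizing the transversal vanishing. This is not so: Levi-flatness only kills the restriction of $\partial\ov\partial\wh v$ to the complex tangent spaces of the leaves; the mixed tangential--normal components of the complex Hessian of a defining function are not controlled by Levi-flatness and are generically nonzero \emph{on} $\wh M$. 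For instance, if the leaves are $\{w=t+i\phi(t)\}$ with $\phi$ real and nonlinear, then $\wh v=\mathrm{Im}\,w-\phi(\mathrm{Re}\,w)$ defines a Levi-flat hypersurface but $\partial\ov\partial\wh v=-\tfrac14\phi''(\mathrm{Re}\,w)\,dw\wedge d\ov w\neq0$ on it, so (ii) fails no matter how you rescale. The actual Forstneri\v{c}--Laurent--Thi\'ebaut mechanism, which the paper follows, is different: one takes the leafwise-holomorphic parametrization (respectively the leaf parameter $\wh u$), complexifies the transverse parameter $t$ to $z_0=t+iT$, produces a Whitney almost-holomorphic extension $\Phi$ with $\ov\partial\Phi$ vanishing to second order on $\{T=0\}$, and sets $\wh v=(\mathrm{Im}\,\xi)\circ\Phi$ (an \emph{approximate pluriharmonic conjugate} of $\wh u$); then $\partial\ov\partial\wh v$ vanishes to first order on $\wh M$ because $\mathrm{Im}$ of a holomorphic function is pluriharmonic and the error is quadratically small. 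Citing FLT is not enough here: the construction of this special defining function is the whole content of the lemma, since an arbitrary defining function of the Levi-flat hypersurface does not satisfy (ii).

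The same issue undermines your treatment near the singular points and the gluing. Taking $\wh v=\mathrm{Im}\,w$ minus the graph function and invoking \eqref{HH}, \eqref{aesti} conflates two different smallness statements: those estimates give decay in terms of $u$ (the distance to $p_i$), whereas (ii) demands that $\partial\ov\partial\wh v$ vanish on $\wh M$ itself and be $o(\mathrm{dist}(\cdot,\wh M))$ near every point of $\wh M$, including smooth points at fixed small $u_0>0$, where the complex Hessian of your candidate is a small but nonzero constant; so the ``half-power gain'' bounds the wrong quantity. The paper instead uses the map $F$ of Proposition \ref{mamodel} (holomorphic along each leaf), extends it by Whitney to $\wh F$ with $\ov\partial\wh F=o(T^2)+o(\mathrm{dist}^2)$, and sets $\wh v=T\circ(\wh F)^{-1}$, then runs the Hu-type computation. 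Finally, for the overlap you correctly identify that the two local defining functions must agree to second order along $\wh M$, but you defer this to a compatibility ``implicit in Theorem \ref{maintheorem-global}''; the paper makes it an explicit step, arranging the global leaf function $\wh u$ on $A_0$ to coincide with $t\circ(\wh F)^{-1}$ (respectively $\mathrm{Re}\,w$ in the real-analytic case) on $\wh{D(M_i)}\cap A_0$, so that all local pieces are approximate pluriharmonic conjugates of one and the same function and their differences vanish to high order on $\wh M$; only then does the partition-of-unity computation for $d\wh v\neq0$ and for $\partial\ov\partial\wh v=o(\mathrm{dist})$ go through. A cutoff constant on leaves does not by itself remove the terms $\partial\chi\wedge\ov\partial(\wh v_1-\wh v_2)$ and $(\wh v_1-\wh v_2)\,\partial\ov\partial\chi$, so this matching cannot be skipped.
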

\begin{proof}[Proof of Lemma \ref{regular}] Let $M_1$ and $M_2$ be two leaves sufficiently close to $p_1,p_2$, respectively, as above. 
By the Reeb-Thurston stability theorem,  there is a $C^3$-smooth function $u$ on $M\sm \{p_1,p_2\}=\cup_{0<t<3}{M_t}$ such that $du|_{M\sm \{p_1,p_2\}}\neq0$ and $M\sm \{D(M_1)\cup D(M_2)\}\cup\{p_1,p_2\}=\cup_{1\le t \le 2}{M_t}$. By Theorem \ref{openness}, we can define a $C^3$-smooth function $\widehat u$ on $\wh{M}\sm \{p_1,p_2\}=\cup_{0<t<3}\wh{M_t}$ 
such that $\widehat u$ has constant value on each $\wh{M_t}$ and $d\wh{u}|_{\wh{M}\sm \{p_1,p_2\}}\not =0$.

Let $A_0:=\cup_{1-\epsilon_1<t<2+\epsilon_1}\wh{M_t}$ where $0<\epsilon_1\ll1$. From  the proof of Theorem \ref{openness}, any $p\in A_0\setminus  M$ is covered by a local coordinate chart $(V_{\alpha},\phi_{\alpha})$ such that $\phi_{\alpha}$ is at least $C^3$-smooth and 
\begin{equation*}
\phi_{\alpha}(V_{\alpha})=\{(t,z_1,\cdots,z_{n}):|t|<\epsilon,\,\,|z_1|^2+\cdots+|z_{n}|^2<\e\}\subset\mathbb R\times\mathbb C^{n}   
\end{equation*}
with $0<\e\ll1$ (see (\ref{noboundary})). Here points on the same  leaf have the same  $t$-value, and $\phi_{\a}^{-1}$ is holomorphic for fixed $t$. By the Whitney extension theorem, we have a $C^3$-smooth extension $\Phi_{\a}$ of $\phi_{\alpha}^{-1}$ to an open neighborhood 
$$B_{\alpha,\delta}:=\{(z_0=t+iT,z_1,\cdots,z_{n}):|t|<\epsilon,|T|<\delta,|z_1|^2+\cdots+|z_n|^2<\epsilon^2\}\subset\mathbb C^{n+1}$$
with $0<\delta\ll\epsilon$ 
such that 
\begin{equation*}
\ov \partial \Phi_{\beta}=o(T^2)\,\,\,{\rm as}\,\,T\rightarrow0.   
\end{equation*} 
Write $\mathcal V_{\alpha}:=\Phi_{\a}(B_{\alpha,\delta})$.
Similarly, any $p\in M\cap A_0$ is covered by a local coordinate chart $(V_{\beta},\phi_{\beta})$ such that $\phi_{\beta}$ is at least $C^3$-smooth and 
\begin{equation*}
\phi_{\beta}(V_{\beta})=\{(t,z_1,\cdots,z_{n}):r(t,z)\leq0,\,\,|t|<\epsilon,|z_1|^2+\cdots+|z_n|^2<\epsilon^2\}\subset\mathbb R\times\mathbb C^{n}
\end{equation*}
with $r$ given by (\ref{boundary}) and $0<\epsilon\ll1$.  
By the Whitney extension theorem, we have a $C^3$-smooth extension $\Phi_{\beta}$ of $\phi_{\beta}^{-1}$ to an open neighborhood 
$$B_{\beta,\delta}:=\{(z_0=t+iT,z_1,\cdots,z_{n}):r(t,z)<\delta,\,\,|t|<\epsilon,|T|<\delta,|z_1|^2+\cdots+|z_n|^2<\epsilon^2\}\subset\mathbb C^{n+1}$$
with $0<\delta\ll\epsilon$ 
such that 
\begin{equation}\label{dbv}
\ov \partial \Phi_{\beta}(z)=o(T^2)+o(({\rm dist}(z,\phi_{\beta}(V_{\beta})))^2)\,\,\,{\rm as\,\,}{\rm dist}(z,\phi_{\beta}(V_{\beta}))\rightarrow 0,    
\end{equation}
where ${\rm dist}(z,\phi_{\beta}(V_{\beta}))$ is the distance from $z$ to $\phi_{\beta}(V_{\beta})$ with respect to the Euclidean metric. Write $\mathcal V_{\beta}:=\Phi_{\beta}(B_{\beta,\delta})$. 


Notice that $(\phi^{-1}_{\beta})^*(\widehat u)=u_{\beta}(t)$ where $u_{\beta}(t)$ is $C^3$-smooth and $\frac{du_{\beta}(t)}{dt}\neq 0$. By the Whitney extension theorem, there is $C^3$-smooth function $\xi$ on $B_{\beta,\delta}$ such that $\xi|_{\phi_{\beta}(V_{\beta})}=u_{\beta}$ and that $\overline{\partial}\xi$ vanishes to order 
$2$ on $B_{\beta,\delta}\cap\{T=0\}$. 
Since $i\partial\overline{\partial}$ is a real operator, $\partial\overline{\partial}(\h{Im}\,\xi)$ vanishes to order $1$ on $B_{\beta,\delta}\cap\{T=0\}$. 
Define $\widehat v_{\beta}=(\h{Im}\, \xi)\circ \Phi_{\beta}$. Then  $d\widehat v_{\beta}|_{B_{\beta,\delta}\cap\{T=0\}}\neq0$ for $\frac{du_{\beta}(t)}{dt}\neq 0$.
Writing  
$(\wt z_{\wt 0},\cdots, \wt z_{\wt n})$ for the coordinates of the target, computation yields that \begin{equation*}\begin{split}&\frac{\partial^2\widehat v_{\beta}}{\partial z_j\ov{\partial}z_k}=\frac{\partial}{\partial z_j}\sum_{\gamma=0}^n(\frac{\partial\h{Im}\,\xi}{\partial\wt z_{\gamma}}\frac{\partial  \Phi_{\b}^{\wt\gamma}}{\partial\ov{z_k}}+\frac{\partial\h{Im}\,\xi}{\partial\ov{\wt z_{\gamma}}}\frac{\partial\ov{\Phi_{\b}^{ \wt \gamma}}}{\partial\ov{z_k}})=\sum_{\delta,\gamma=0}^n(\frac{\partial^2\h{Im}\,\xi}{\partial\wt z_{\delta}\partial\wt z_{\gamma}}\frac{\partial \Phi_{\b}^{ \wt\delta}}{\partial{z_j}}\frac{\partial \Phi_{\b}^{\wt\gamma}}{\partial\ov{z_k}}+\frac{\partial^2\h{Im}\,\xi}{\partial\ov{\wt z_{\delta}}\partial\wt z_{\gamma}}\frac{\partial\ov{\Phi_{\b}^{\wt\delta}}}{\partial{z_j}}\frac{\partial\Phi_{\b}^{\wt\gamma}}{\partial\ov{z_k}}\\&+\frac{\partial^2\h{Im}\,\xi}{\partial\wt z_{\delta}\partial\ov{\wt {z_{\gamma}}}}\frac{\partial\Phi_{\b}^{\wt\delta}}{\partial{z_j}}\frac{\partial \ov{\Phi_{\b}^{\wt\gamma}}}{\partial\ov{z_k}}+\frac{\partial^2\h{Im}\,\xi}{\partial\ov{\wt z_{\delta}}\partial\ov{\wt {z_{\gamma}}}}\frac{\partial\ov{\Phi_{\b}^{\wt\delta}}}{\partial{z_j}}\frac{\partial \ov{\Phi_{\b}^{\wt\gamma}}}{\partial\ov{z_k}})+\sum_{\gamma=0}^n(\frac{\partial\h{Im}\,\xi}{\partial\wt z_{\gamma}}\frac{\partial^2\Phi_{\b}^{\wt\gamma}}{\partial z_j\partial\ov{z_k}}+\frac{\partial\h{Im}\,\xi}{\partial\ov{\wt z_{\gamma}}}\frac{\partial^2\ov{\Phi_{\b}^{\wt\gamma}}}{\partial z_j\partial\ov{z_k}}).\end{split}   
\end{equation*}
By (\ref{dbv}), $\widehat v_{\beta}$ satisfies properties (i), (ii) locally. In the same way, we can derive the desired defining function $\widehat v_{\alpha}$ locally in a small open neighborhood of $V_{\a}$, by finding approximate harmonic conjugate of $\widehat u$ (similar to that in \cite[Propositions 1.2, 3.1]{FLT}). Moreover, on the overlaps in $\widehat M$, local defining functions agree at least to order $3$ up to the boundary $M$. 

Next we consider the points near CR singularities.
When $M$ is real analytic, it follows by the result of Huang-Yin in \cite{HY3} that, near the elliptic CR singular point $p_1$ or $p_2$, $\widehat M$ can be biholomorphically transformed to a domain defined by $\widehat v:=\h{Im}\, w=0$, $\h{Re}\, w\geq G(z,\ov z)$, and that closed leaves of $\widehat M$ are cut out by $\h{Re}\,w=$ constant, where $(z,w)$ are coordinates of $\mathbb C^{n+1}$.  Then for $i=1,2$, there are open neighborhoods $\mathcal U_i$ of $\ov{D(M_i)}$ in $\mathcal M$, and $C^3$-smooth functions $\widehat v_i$ on $\mathcal U_i$ such that properties (i), (ii) hold. We arrange the defining function of the foliation $\widehat u$ on $A_0=\cup_{1-\epsilon_1<t<2+\epsilon_1}\wh{M_t}$ so that it coincides with the function $\h{Re}\,w$ on $\wh{D(M_i)}\cap A_0$.

Now assume that $M$ is $C^{N_0}$-smooth, where $N_0\geq 10$. Let $p_1,p_2$ be two elliptic CR singular points. 
By Proposition \ref{mamodel} and the proof of Theorem \ref{maintheorem-local}, we have a $C^{3}$-smooth diffeomorphism $F$ from $\wh{M^a_{\epsilon}}:=\{(t,z)\in \RR\times\CC^n:  z\in \ov{\Omega_{\sqrt{t}}},\ 0< t<{\epsilon}\}\cup\{0\}$ to an open neighborhood of $p_1$ (or $p_2$) in $\widehat M$ such that along each leaf $\Omega_{\sqrt{t}}$, $F$ is holomoprhic. Here $\wh{M^a_{\epsilon}}$ is itself a $C^3$-smooth domain  with boundary in $\RR\times \CC^n$. By the Whitney extension theorem,  we  derive a $C^3$-smooth extension $\wh F$ of $F$ to an open neighborhood 
$$B_{\delta}:=\{(z_0=t+iT,z_1,\cdots,z_{n}):|t|<\epsilon,|T|<\delta,|z_1|^2+\cdots+|z_n|^2<\epsilon^2\}\subset\mathbb C^{n+1}$$
with $0<\delta\ll\epsilon$ 
such that 
\begin{equation*}
\ov \partial \widehat{F}(z)=o(T^2)+o(({\rm dist}(z,\wh{M^a_{\epsilon}}))^2)\, \,\,{\rm as\,\,}{\rm dist}(z,\wh{M^a_{\epsilon}})\rightarrow 0,    
\end{equation*}
where ${\rm dist}(z,\wh{M^a_{\epsilon}})$ is the distance from $z$ to $\wh{M^a_{\epsilon}}$ with respect to the Euclidean metric. 
Define $\widehat v=T\circ (\widehat F)^{-1}$. 
By a similar computation as in \cite{Hu1}, 
$\widehat v$ satisfies properties (i), (ii) locally.  Then for $i=1,2$, there are open neighborhoods $\mathcal U_i$ of $\ov{D(M_i)}$ in $\mathcal M$, and $C^3$-smooth functions $\widehat v_i$ on $\mathcal U_i$ such that properties (i), (ii) hold. Similarly,  we arrange the defining function of the foliation $\widehat u$ on $A_0$ so that it coincides with the function $t\circ(\widehat F)^{-1}$ on $\wh{D(M_i)}\cap A_0$.


Take a $C^3$-smooth partition of unity $\{\chi_1,\chi_2,\chi_{\alpha},\cdots,\chi_{\beta},\cdots\}$  subordinate  to a finite cover $\{\mathcal U_1,\mathcal U_2,\mathcal V_{\alpha},$ $\cdots,\mathcal V_{\beta},\cdots\}$.  We now define 
$$\widehat v:=\chi_{1}\wh v_1+\chi_2\wh v_2+\sum\nolimits_{\a}\chi_{\a}\wh v_{\a}+\sum\nolimits_{\b}\chi_{\b}\wh v_{\b}=:\sum\nolimits_{\gamma}\chi_{\gamma}\wh v_{\gamma}.$$ Let $\mathcal U$ be a small neighborhood of $\wh{M}$. On the support of each $\chi_{\gamma_0}$, computation yields that 
\begin{equation*}
d\wh v=\sum\nolimits_{\gamma}(d\chi_{\gamma}\wh v_{\gamma}+ d\wh v_{\gamma}\chi_{\gamma})=\sum\nolimits_{\gamma}(d\chi_{\gamma}(\wh v_{\gamma}-\wh v_{\gamma_0})+ d\wh v_{\gamma}\chi_{\gamma}),    
\end{equation*}
which equals to $d\wh v_{\gamma_0}\neq 0$ when restricted to $\wh M\cap{\rm Supp}\,\chi_{\gamma_0}$. Similarly, on the support of each $\chi_{\gamma_0}$, computation yields that 
\begin{equation*}
\begin{split}
\frac{\partial^2 \wh v}{\partial z_j\partial\ov{z_k}}=&\sum\nolimits_{\gamma}(\frac{\partial^2 \chi_{\gamma}}{\partial z_j\partial\ov{z_k}}\wh v_{\gamma}+\frac{\partial^2 \wh v_{\gamma}}{\partial z_j\partial\ov{z_k}}\chi_{\gamma}+\frac{\partial \wh v_{\gamma}}{\partial z_j}\frac{\partial \chi_{\gamma}}{\partial\ov{z_k}}+\frac{\partial \chi_{\gamma}}{\partial z_j}\frac{\partial \wh v_{\gamma}}{\partial\ov{z_k}})\\
=&\sum\nolimits_{\gamma}(\frac{\partial^2 \chi_{\gamma}}{\partial z_j\partial\ov{z_k}}(\wh v_{\gamma}-\wh v_{\gamma_0})+\frac{\partial^2 \wh v_{\gamma}}{\partial z_j\partial\ov{z_k}}\chi_{\gamma}+\frac{\partial (\wh v_{\gamma}-\wh v_{\gamma_0})}{\partial z_j}\frac{\partial \chi_{\gamma}}{\partial\ov{z_k}}+\frac{\partial \chi_{\gamma}}{\partial z_j}\frac{\partial (\wh v_{\gamma}-\wh v_{\gamma_0})}{\partial\ov{z_k}}).
\end{split}    
\end{equation*}
We therefore see the proof of  Lemma \ref{regular}. \end{proof}

\begin{proof}[Proof of Theorem \ref{hull}]
The proof is the same as that in \cite[Proposition 3.3]{FLT}.  
Choose $\mathcal U$ to be sufficiently close to $\wh{M}$ such that $d\wh{v}\not = 0$ in $\mathcal U$, where $\widehat v$ is the defining function constructed in Lemma \ref{regular}. Let $\widetilde{M}$ be the zero set of $\wh{v}$ in $\mathcal U$.  
Choose a small $c>0$ such that 
$\{p\in\widetilde{M} \colon  \rho(p) \le c\} \subset\joinrel\subset  \mathcal U$
and each value in $[0,c]$ is a regular value of $\rho|_{\widetilde{\mathcal M}}$.
For sufficiently 
small $\epsilon>0$, we set $\widehat v_\epsilon^\pm(p) = \pm \widehat v(p) + \epsilon (\rho(p)-c)$ where $p\in\mathcal U$, and define
\begin{equation*}
\Omega_\epsilon:= \{p\in\mathcal U: \widehat v^+_\epsilon(p)<0,\ \widehat v^{-}_\epsilon(p)<0,\         \rho(p)<\epsilon \}. 
\end{equation*}
It is clear that $\partial\Omega_\epsilon$ consists of three $C^2$-smooth strongly pseudoconvex  hypersurfaces (towards $\O_\epsilon$): $\Gamma_\epsilon^\pm=\{p\in\mathcal U:  \widehat v_\epsilon^\pm(p)=0,\ \rho(p)<\epsilon\}$, $\{p\in\mathcal U: \rho(p)=\epsilon,\ \widehat v^+_\epsilon(p)<0,\ \widehat v^{-}_\epsilon(p)<0\}$, and that $\Omega_\epsilon$ 
shrinks down to $\widehat M$  as $\epsilon \to 0$. By Lemma \ref{regular}, we conclude that $\wh v_\epsilon^\pm$ are strongly plurisubharmonic 
on $\Omega_\epsilon$ provided that $0<\epsilon\ll1$. Hence, $\Omega_\epsilon$ are Stein domains for $0<\epsilon\ll1$.

The proof of Theorem \ref{hull} and thus the proof of Theorem \ref{maintheorem-global} are  complete.  \end{proof}


\bibliographystyle{plain} 
\bibliography{ref}
\Addresses
\end{document}